\newcommand{\script}[1]{{\mathcal{#1}}}
\newcommand{\A}{\script{A}}
\newcommand{\B}{\script{B}}
\newcommand{\Cgh}{\script{C}}
\newcommand{\I}{\script{I}}
\newcommand{\Y}{\script{Y}}
\newcommand{\Zme}{\mathcal{Z}}
\newcommand{\Hil}{\mathcal{H}}
\newcommand{\ip}[2]{\left( \left. #1 \, \right| \, #2 \right)}
\newcommand{\hip}[2]{\langle \! \langle #1, #2 \rangle \! \rangle}
\newcommand{\abs}[1]{{\left| #1 \right|}}
\newcommand{\norm}[1]{{\left\| #1 \right\|}}
\newcommand{\supp}{\operatorname{supp}}
\newcommand{\Prim}{\operatorname{Prim}}
\newcommand{\id}{{\rm{id}}}
\newcommand{\spa}{\operatorname{span}}
\newcommand{\image}{\operatorname{im}}
\newcommand{\omax}{\otimes_{\rm{max}}}
\newcommand{\go}{{G^{\scriptscriptstyle{(0)}}}}
\newcommand{\gtwo}{G^{\scriptscriptstyle{(2)}}}
\newcommand{\ho}{{H^{\scriptscriptstyle{(0)}}}}
\newcommand{\lo}{L^{\scriptscriptstyle{(0)}}}
\newcommand{\iso}{\operatorname{Iso}}
\DeclareMathOperator{\Ind}{Ind}
\newcommand{\Zop}{Z^{\text{op}}}
\newcommand{\lt}{{\rm{lt}}}
\newtheorem{prop}{Proposition}[section]
\newtheorem{thm}[prop]{Theorem}
\newtheorem{cor}[prop]{Corollary}
\newtheorem{lem}[prop]{Lemma}
\theoremstyle{definition}
\newtheorem{exmp}[prop]{Example}
\newtheorem{rem}[prop]{Remark}
\newtheorem{ques}{Question}
\newlist{thmenum}{enumerate}{10}
\setlist[thmenum,1]{label=\textnormal{(\alph*)}}
\setlist[thmenum,2]{label=\textnormal{(\roman*)}}
\newlist{altenum}{enumerate}{10}
\setlist[altenum,1]{label=\textnormal{(\roman*)}}
\setlist[altenum,2]{label=\textnormal{(\alph*)}}
   \definecolor{strawberry}{RGB}{0, 128, 255}
\title{On some Permanence Properties of Exact Groupoids}
\author{Scott M. LaLonde}
\date{\today}
\address{Department of Mathematics, The University of Texas at Tyler, 3900 University Boulevard, Tyler, TX 75799}
\email{slalonde@uttyler.edu}
\keywords{Exact groupoid, groupoid crossed product, inner exactness.}
\subjclass[2010]{Primary 46L55; Secondary 22A22, 43A07}
\begin{document}

\begin{abstract}
	A locally compact groupoid is said to be exact if its associated reduced crossed product functor is exact. In this paper, we establish some permanence properties
	of exactness, including generalizations of some known results for exact groups. Our primary goal is to show that exactness descends to certain types of closed 
	subgroupoids, which in turn gives conditions under which the isotropy groups of an exact groupoid are guaranteed to be exact. As an initial step toward these results, 
	we establish the exactness of any transformation groupoid associated to an action of an exact groupoid on a locally compact Hausdorff space. We also obtain a partial 
	converse to this result, which generalizes a theorem of Kirchberg and Wassermann. We end with some comments on the weak form of exactness known as inner 
	exactness.
\end{abstract}

\maketitle

\section{Introduction}
The notion of an exact group was first introduced by Kirchberg and Wassermann in \cite{kw99} for the purpose of studying the continuity of $C^*$-bundles associated to 
reduced crossed products. They defined a locally compact group $G$ to be \emph{exact} if given any $C^*$-dynamical system $(A, G, \alpha)$ and any $G$-invariant ideal
$I \subseteq A$, the sequence
\[
	0 \to I \rtimes_{\alpha_I, r} G \to A \rtimes_{\alpha, r} G \to A/I \rtimes_{\alpha^I, r} G \to 0
\]
is exact. There are now several other characterizations of exactness, including amenability at infinity \cite{claire02, bcl} and, in the case of discrete groups, the exactness of the 
reduced $C^*$-algebra $C_r^*(G)$ \cite[Theorem 5.2]{kw99}.

Given the level of attention received by groupoids and their $C^*$-algebras in recent years, it is natural and worthwhile to study exactness for locally compact groupoids. Indeed, 
we can generalize the definition given by Kirchberg and Wassermann, and declare a locally compact Hausdorff groupoid to be \emph{exact} if the sequence
\[
	0 \to \I \rtimes_{\alpha_I, r} G \to \A \rtimes_{\alpha, r} G \to \A/\I \rtimes_{\alpha^I, r} G \to 0
\]
is exact for any dynamical system $(\A, G, \alpha)$ and any $G$-invariant ideal $\I$. The study of exact groupoids is fairly new, though they have been investigated in some detail in
\cite{lalondeexact, lalonde2014}, and more recently in \cite{claireexact}.

It is well-known that the full crossed product functor associated to any group or groupoid is exact. At the time \cite{kw99} was written, it was unknown whether there were 
groups for which the exactness of the reduced crossed product failed. However, Gromov later produced examples of non-exact discrete groups. More recently, Higson,
Lafforgue, and Skandalis \cite{hls} have constructed reasonably simple examples of non-exact groupoids. These examples are particularly important, since they also provide
counterexamples to the Baum-Connes conjecture for groupoids.

The aim of this paper is to investigate some permanence properties for exact groupoids. One such property was already observed in \cite{lalondeexact}, where it was shown
that exactness is preserved under equivalence of groupoids. Most of the results that we take up in this paper are inspired by ones established for groups in \cite{kw99-2}. We 
are focused primarily on showing that exactness descends to certain kinds of closed subgroupoids. As a special case, such results have implications for the isotropy bundle and
isotropy groups of an exact groupoid. A crucial intermediate step is the result that if $G$ is an exact groupoid and $X$ is any $G$-space, then the transformation groupoid 
$G \ltimes X$ is exact. We also obtain a partial converse to this result---the existence of a certain exact transformation groupoid guarantees the exactness of $G$. This 
generalizes a result for groups from \cite[\S 7]{kw99-2}.

The structure of this paper is as follows. In Section \ref{sec:background}, we present some background information on locally compact groupoids, crossed products, and
exactness. Section \ref{sec:transformation} is where we establish our first results about exactness for transformation groupoids, and in Section \ref{sec:subgroupoids} we
apply these results to determine when exactness descends to subgroupoids. In Section \ref{sec:converse} we establish a partial converse to the main result of Section
\ref{sec:transformation}. Finally, in Section \ref{sec:inner} we present a brief result together with some examples, comments, and open questions pertaining to the related notion 
of \emph{inner exactness} (as defined by Anantharaman-Delaroche in \cite{claireweak}).

\section{Preliminaries on Exact Groupoids}
\label{sec:background}
Let $G$ be a locally compact Hausdorff groupoid. We denote the unit space of $G$ by $\go$ and the set of composable pairs by $\gtwo$. The range and source maps are denoted 
by $r, s : G \to \go$, respectively. Unless otherwise specified, it is assumed that all groupoids are locally compact, Hausdorff, and second countable, and that they admit continuous 
Haar systems.

Let us introduce a few bits of useful notation. Given a groupoid $G$ and a set $A \subseteq \go$, we define
\[
	G_A = s^{-1}(A), \quad G^A = r^{-1}(A).
\]
The set $G \vert_A = G_A \cap G^A$ is a groupoid, called the \emph{reduction} of $G$ to $A$. Notice that if $A$ is invariant in $\go$ (meaning it is invariant under the natural action 
of $G$ on $\go$), then $G \vert_A = G_A = G^A$. Of particular import is the special case where $A$ is a singleton: if $u \in \go$,
\[
	G \vert_{\{u\}} = \{\gamma \in G : r(\gamma) = s(\gamma) = u\}
\]
is a group, called the \emph{isotropy group} at $u$. The union of all the isotropy groups is called the \emph{isotropy bundle} (or \emph{isotropy subgroupoid}), denoted by
$\iso(G)$.

Much of what we aim to do involves groupoid actions on fibered spaces. Let $X$ be a topological space. We say that $X$ is a \emph{(left) $G$-space} if there is a continuous,
open surjection $p : X \to \go$ and a continuous map $G {_s}*_p X \to X$, denoted by $(\gamma, x) \mapsto \gamma \cdot x$, satisfying
\begin{enumerate}
	\item $p(x) \cdot x = x$ for all $x \in X$
	\item $\gamma \cdot (\eta \cdot x) = \gamma \eta \cdot x$ whenever $(\gamma, \eta) \in \gtwo$ and $s(\eta) = p(x)$.
\end{enumerate}

\begin{exmp}[Transformation groupoids]
	Suppose $X$ is a locally compact Hausdorff space, and $G$ is a groupoid acting on the left of $X$. Define
	\[
		G \ltimes X = \left\{ (\gamma, x) \in G \times X : r(\gamma) = p(x) \right\}.
	\]
	Then $G \ltimes X$ is a locally compact Hausdorff space, which is second countable whenever $G$ and $X$ are. It becomes a groupoid under the operations
	\[
		(\gamma, x)(\eta, \eta \cdot x) = (\gamma \eta, x), \quad (\gamma, x)^{-1} = (\gamma^{-1}, \gamma^{-1} \cdot x).
	\]
	Notice that the range and source maps take the form
	\[
		r(\gamma, x) = (r(\gamma), x), \quad s(\gamma, x) = (s(\gamma), \gamma^{-1} \cdot x),
	\]
	so we identify $(G \ltimes X)^{\scriptscriptstyle{(0)}}$ with $X$ and write $r(\gamma, x) = x$ and $s(\gamma, x) = \gamma^{-1} \cdot x$. (See 
	\cite[Proposition 2.3.11]{lalondethesis} for a more detailed discussion.) Furthermore, if $G$ has a Haar system $\{\lambda^u\}_{u \in \go}$, then 
	$\{\lambda^{p(x)} \times \delta_x\}_{x \in X}$ is a Haar system for $G \ltimes X$, where $\delta_x$ denotes the Dirac measure on $X$ concentrated at $x$ 
	\cite[Proposition 2.3.12]{lalondethesis}.
	
	It is worth noting that the definition of the transformation groupoid is occasionally taken to be
	\[
		G*X = \left\{ (\gamma, x) \in G \times X : s(\gamma) = p(x) \right\}
	\]
	equipped with the appropriate operations. In particular, this is the definition used by Muhly \cite[Remark 2.14(1)]{muhly}.
\end{exmp}

Let $G$ be a groupoid with Haar system $\{\lambda^u\}_{u \in \go}$, suppose $\A$ is an upper semicontinuous $C^*$-bundle over $\go$, and let $A = \Gamma_0(\go, \A)$ denote 
the associated algebra of continuous sections that vanish at infinity. Then $A$ is a $C_0(\go)$-algebra, and the fiber of $\A$ over a point $u$ is the quotient of $A$ by the 
ideal of sections vanishing at $u$. We usually denote this fiber by $\A_u$, though we occasionally write $A(u)$ when it is more helpful (or notationally convenient) to think of $\A_u$ 
in its r\^{o}le as a quotient of $A$. Now suppose $G$ acts on $\A$ via fiberwise isomorphisms---that is, there is a family $\alpha = \{ \alpha_\gamma \}_{\gamma \in G}$, where 
$\alpha_\gamma : \A_{s(\gamma)} \to \A_{r(\gamma)}$ is an isomorphism for each $\gamma \in G$, and the map $(\gamma, a) \mapsto \alpha_\gamma(a)$ defines a continuous
action of $G$ on $\A$. Then the triple $(\A, G, \alpha)$ is called a \emph{groupoid dynamical system}. We say the dynamical system is \emph{separable} if $A$ is separable and 
$G$ is second countable. All dynamical systems in this paper are assumed to be separable.

\begin{exmp}
	Suppose $G$ acts on the left of a locally compact Hausdorff space $X$. Then there is a continuous open surjection $p : X \to \go$, so $C_0(X)$ becomes a 
	$C_0(\go)$-algebra with fibers
	\[
		C_0(X)(u) = C_0(p^{-1}(u))
	\]
	by \cite[Example C.4]{TFB2}. Let $\Cgh$ denote the associated upper semicontinuous $C^*$-bundle. Then by \cite[Proposition 4.38]{geoff} or \cite[Example 4.8]{mw08}, 
	$G$ acts on $\Cgh$ via the family $\lt = \{\lt_\gamma\}_{\gamma \in G}$, where $\lt_\gamma : C_0(p^{-1}(s(\gamma))) \to C_0(p^{-1}(r(\gamma)))$ is defined by
	\[
		\lt(f)(x) = f(\gamma^{-1} \cdot x).
	\]
	Thus $(\Cgh, G, \lt)$ is a groupoid dynamical system, which is separable whenever $G$ and $X$ are second countable.
\end{exmp}

Given a groupoid dynamical system $(\A, G, \alpha)$, the space $\Gamma_c(G, r^*\A)$ of continuous compactly-supported sections becomes a $*$-algebra under the convolution 
product
\[
	f * g(\gamma) = \int_G f(\eta) \alpha_\eta \bigl( g(\eta^{-1} \gamma) \bigr) \, d\lambda^{r(\gamma)}(\eta)
\]
and involution
\[
	f^*(\gamma) = \alpha_\gamma \bigl( f(\gamma^{-1})^* \bigr).
\]
Moreover, $\Gamma_c(G, r^*\A)$ forms a topological $*$-algebra under the \emph{inductive limit topology}. (Recall that a net converges in the inductive limit topology if it converges 
uniformly and the supports of its elements are eventually contained in a fixed compact set.) However, there are many different ways to define a $C^*$-norm on $\Gamma_c(G, r^*\A)$. 
One such example is the \emph{universal norm}, which we define by setting $\norm{f} = \sup \norm{\pi(f)}$, where $\pi$ ranges over all representations of $\Gamma_c(G, r^*\A)$ on 
Hilbert space that are continuous in the inductive limit topology. The resulting completion is the \emph{full crossed product} of $\A$ by $G$, denoted by $\A \rtimes_\alpha G$. 

There is also a notion of reduced groupoid crossed product. As with other reduced constructions, its norm is easier to handle, but it is more poorly behaved then the full crossed
product. In order to construct the reduced norm on $\Gamma_c(G, r^*\A)$, it is necessary to delve into induced representations of groupoid crossed products. Suppose $(\A, G, \alpha)$
is a separable groupoid dynamical system. The space $\Gamma_c(G, s^*\A)$ is a right pre-Hilbert $A$-module with respect to the operations
\[
	z \cdot a(\gamma) = z(\gamma) a(s(\gamma))
\]
and
\[
	\hip{z}{w}_A(u) = \int_G z(\xi)^* w(\xi) \, d\lambda_u(\xi),
\]
and its completion $\Zme$ is a full right Hilbert $A$-module. Furthermore, $\A \rtimes_\alpha G$ acts on $\Zme$ via adjointable operators, where the action 
is characterized by
\[
	f \cdot z(\gamma) = \int_G \alpha_\gamma^{-1} \bigl( f(\eta) \bigr) z(\eta^{-1} \gamma) \, d\lambda^{r(\gamma)}(\eta)
\]
for $f \in \Gamma_c(G, r^*\A)$ and $z \in \Gamma_c(G, s^*\A)$. Thus we can use Rieffel induction to build representations of $\A \rtimes_\alpha G$ from those of $A$. If
$\pi : A \to B(\Hil)$ is a representation of $A$, then we define $\Zme \otimes_A \Hil$ to be the Hilbert space completion of the algebraic tensor product $\Zme \odot \Hil$ with 
respect to the pre-inner product characterized by
\[
	\ip{z \otimes h}{w \otimes k} = \ip{\pi(\hip{w}{z}_A) h}{k}	
\]
for $z, w \in \Zme$ and $h, k \in \Hil$, as in \cite[Equation 2.25]{TFB1}. The induced representation $\Ind \pi$ of $\A \rtimes_\alpha G$ then acts on $\Zme \otimes_A \Hil$ by
\[
	\Ind \pi(f)(z \otimes h) = f \cdot z \otimes h.
\]
We will refer to these sorts of induced representations as \emph{regular representations}. If we take $\pi$ to be faithful, we can define the \emph{reduced norm} by
\[
	\norm{f}_r = \norm{\Ind \pi(f)}.
\]
Notice that this norm is well-defined since induction respects weak containment. The completion of $\Gamma_c(G, r^*\A)$ with respect to $\norm{\cdot}_r$ is called the 
\emph{reduced crossed product} of $\A$ by $G$, denoted by $\A \rtimes_{\alpha, r} G$.

As alluded to above, the full crossed product construction has good functorial properties. On the other hand, the reduced crossed product can be quite poorly behaved at times. 
One well-known instance pertains to short exact sequences. Let $(\A, G, \alpha)$ be a dynamical system, and let $I \subseteq A$ be an ideal. Then \cite[\S3.3]{dana-marius} shows that 
$I$ and $A/I$ are both $C_0(\go)$-algebras, and we let $\I$ and $\A/\I$ denote the corresponding upper semicontinuous $C^*$-bundles. If $I$ is \emph{$G$-invariant}, meaning that
\[
	\alpha_\gamma(\I_{s(\gamma)}) = \I_{r(\gamma)}
\]
for all $\gamma \in G$, then we obtain dynamical systems $(\I, G, \alpha \vert_I)$ and $(\A/\I, G, \alpha^I)$. Furthermore, the resulting sequence of full crossed products
\[
	0 \to \I \rtimes_{\alpha\vert_I} G \to \A \rtimes_{\alpha} G \to \A/\I \rtimes_{\alpha^I} G \to 0
\]
is always exact. In contrast, the reduced crossed product functor is not exact in general. That is, the sequence of reduced crossed products
\begin{equation}
\label{eq:reducedseq}
	0 \to \I \rtimes_{\alpha \vert_I, r} G \to \A \rtimes_{\alpha, r} G \to \A/\I \rtimes_{\alpha^I, r} G \to 0
\end{equation}
need not be exact. Indeed, Higson, Lafforgue, and Skandalis \cite{hls} have produced examples of groupoids for which sequences of the form \eqref{eq:reducedseq} can fail to 
be exact. These so-called HLS groupoids have since been studied in \cite{willett} and \cite{claireweak}. In \cite{willett}, Willett constructed an HLS groupoid $G$ satisfying the weak
containment property (i.e., $C^*(G) = C_r^*(G)$) which nevertheless fails to be amenable. The properties exhibited by the original HLS groupoids in \cite{hls} and by Willett's 
groupoid arise in part due to the failure of the groupoids to be \emph{inner exact}, a term coined by Anantharaman-Delaroche in \cite{claireweak}.

Finally, we will repeatedly appeal to the concept of groupoid equivalence and its relationship to exactness. Recall that two locally compact Hausdorff groupoids $G$ and $H$ are 
said to be \emph{equivalent} if there is a locally compact Hausdorff space $Z$ such that
\begin{itemize}
	\item $G$ and $H$ act freely and properly on the left and right of $Z$, respectively;
	\item the actions of $G$ and $H$ commute; and
	\item the structure maps $Z \to \go$ and $Z \to \ho$ for the actions induce homeomorphisms $Z/H \cong \go$ and $G \backslash Z \cong \ho$.
\end{itemize}
Any such space $Z$ is called a \emph{$(G, H)$-equivalence}. It is a well-known fact that if $G$ and $H$ are equivalent groupoids, then $C^*(G)$ and $C^*(H)$ are Morita 
equivalent, and the same is true for the reduced $C^*$-algebras \cite[Theorem 4.1]{sims-williams2012}. Furthermore, equivalence preserves measurewise amenability 
\cite[Theorem 3.2.16]{ananth-renault} and exactness \cite[Theorem 4.8]{lalondeexact}. We will need to invoke the latter result several times throughout this paper.

\section{Transformation Groupoids}
\label{sec:transformation}
Let $G$ be a locally compact Hausdorff groupoid with Haar system, and suppose $X$ is a left $G$-space. This first part of this section is devoted to showing that any 
$(G \ltimes X)$-dynamical system yields a $G$-dynamical system in a natural way, and the resulting crossed products are isomorphic. As a result, we are then able to show that if 
$G$ is exact and $X$ is any left $G$-space, then the transformation groupoid $G \ltimes X$ is also exact. We use this result in the next section to show that exactness passes to 
closed subgroupoids. 

We first need a fact regarding upper semicontinuous $C^*$-bundles. This result is undoubtedly clear to experts, but we present the details here. The following proposition
is an extension of \cite[Example C.4]{TFB2} to general $C_0(X)$-algebras.

\begin{prop}
\label{prop:c0fiber}
	Let $X$ and $Y$ be locally compact Hausdorff spaces, and suppose $\sigma : Y \to X$ is continuous. Let $\A \to Y$ be an upper semicontinuous $C^*$-bundle over $Y$, and
	let $A = \Gamma_0(Y, \A)$ denote the associated $C_0(Y)$-algebra. Then $A$ is also a $C_0(X)$-algebra, with the $C_0(X)$-action given by
	\[
		(\varphi \cdot a)(y) = \varphi(\sigma(y)) a(y)
	\]
	for $a \in A$ and $\varphi \in C_0(X)$. The resulting upper semicontinuous $C^*$-bundle $\B \to X$ has fibers
	\[
		\B_x = \Gamma_0(\sigma^{-1}(x), \A),
	\]
	and the section algebra $B = \Gamma_0(X, \B)$ is isomorphic to $A$ via the map $\Theta : A \to B$ defined by
	\[
		\Theta(a)(x)(y) = a(x)
	\]
	for $a \in A$, $x \in X$, and $y \in \sigma^{-1}(x)$.
\end{prop}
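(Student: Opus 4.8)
The plan is to split the argument into three parts: first verify that the formula $(\varphi \cdot a)(y) = \varphi(\sigma(y)) a(y)$ defines a $C_0(X)$-algebra structure on $A$, then identify the fibers of the associated bundle, and finally invoke the canonical realization of a $C_0(X)$-algebra as a section algebra to produce $\Theta$. For the first part I would start from the $C_0(Y)$-algebra structure on $A = \Gamma_0(Y, \A)$, whose structure homomorphism $C_0(Y) \to ZM(A)$ extends uniquely to a unital homomorphism $C_b(Y) \to ZM(A)$. Since $\sigma$ is continuous, $\varphi \mapsto \varphi \circ \sigma$ is a homomorphism $C_0(X) \to C_b(Y)$, and composing yields a homomorphism $C_0(X) \to ZM(A)$ whose action on sections is exactly $(\varphi \cdot a)(y) = \varphi(\sigma(y)) a(y)$. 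The one point needing an argument is nondegeneracy: since $\Gamma_c(Y, \A)$ is dense in $A$ it suffices to hit each compactly supported section, and if $a$ has compact support $K$ then $\sigma(K)$ is compact, so any $\varphi \in C_c(X)$ with $\varphi \equiv 1$ on $\sigma(K)$ satisfies $\varphi \cdot a = a$. Hence $C_0(X) \cdot A$ is dense and $A$ is a $C_0(X)$-algebra.

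For the fibers, I would write $I_x$ for the ideal of $A$ generated by $\{\varphi \cdot a : \varphi \in C_0(X),\ \varphi(x) = 0,\ a \in A\}$, so that $\B_x = A/I_x$, and set $N_x = \{a \in A : a(y) = 0 \text{ for all } y \in \sigma^{-1}(x)\}$. Restriction of sections to the closed set $\sigma^{-1}(x)$ gives a surjective homomorphism $A \to \Gamma_0(\sigma^{-1}(x), \A)$ with kernel $N_x$ (surjectivity being the standard extension property for sections of an upper semicontinuous $C^*$-bundle over a closed subset), so it is enough to prove $I_x = N_x$. The inclusion $I_x \subseteq N_x$ is immediate, since $(\varphi \cdot a)(y) = \varphi(x) a(y) = 0$ whenever $\varphi(x) = 0$ and $\sigma(y) = x$, and $N_x$ is closed. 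For the reverse inclusion I would fix $a \in N_x$ and $\epsilon > 0$ and consider $L = \{y \in Y : \norm{a(y)} \ge \epsilon\}$; this set is compact because $a$ vanishes at infinity, closed because $y \mapsto \norm{a(y)}$ is upper semicontinuous, and disjoint from $\sigma^{-1}(x)$ since $a$ vanishes there. Thus $\sigma(L)$ is a compact subset of $X$ with $x \notin \sigma(L)$, and I can choose $\varphi \in C_c(X)$ with $0 \le \varphi \le 1$, $\varphi \equiv 1$ on $\sigma(L)$, and $\varphi(x) = 0$. Then $a - \varphi \cdot a$ vanishes on $L$ and is bounded by $\norm{a(y)} < \epsilon$ off $L$, so $\norm{a - \varphi \cdot a} \le \epsilon$ while $\varphi \cdot a$ lies in the generating set for $I_x$; hence $a \in I_x$. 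This gives $\B_x = A/I_x = A/N_x \cong \Gamma_0(\sigma^{-1}(x), \A)$.

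Finally, now that $A$ is known to be a $C_0(X)$-algebra, the standard realization of such an algebra as $\Gamma_0(X, \B)$ for the upper semicontinuous $C^*$-bundle $\B$ with fibers $A/I_x$ furnishes a canonical isomorphism $A \to B = \Gamma_0(X, \B)$ sending $a$ to the section $x \mapsto a + I_x$; composing with the identification $A/I_x \cong \Gamma_0(\sigma^{-1}(x), \A)$, which carries $a + I_x$ to the restriction $a|_{\sigma^{-1}(x)}$, recovers the map $\Theta$, with $\Theta(a)(x) = a|_{\sigma^{-1}(x)}$ (that is, $\Theta(a)(x)(y) = a(y)$ for $y \in \sigma^{-1}(x)$). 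I expect the fiber computation to be the main obstacle---in particular the inclusion $N_x \subseteq I_x$, whose proof leans on the compactness of the super-level sets of $\norm{a(\cdot)}$ and on the extension property for sections over the closed set $\sigma^{-1}(x)$; once these are in place, the remaining verifications are routine.
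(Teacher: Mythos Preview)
Your proof is correct and follows the same overall strategy as the paper: establish the $C_0(X)$-algebra structure by pulling back along $\sigma$, identify the fiber over $x$ with sections vanishing on $\sigma^{-1}(x)$, and invoke the canonical realization of a $C_0(X)$-algebra as a section algebra to produce $\Theta$. The main difference is one of packaging: the paper obtains the $C_0(X)$-structure via the continuous map $\Prim A \to Y \to X$ (so nondegeneracy is automatic) and outsources the harder containment $N_x \subseteq I_x$ and the surjectivity of restriction to a reference, whereas you extend the structure map through $C_b(Y)$, check nondegeneracy by hand on compactly supported sections, and give the approximation argument for $N_x \subseteq I_x$ explicitly. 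Your route is more self-contained and makes clear exactly where upper semicontinuity and the vanishing-at-infinity condition enter; the paper's route is shorter but leans on the cited literature.
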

\begin{proof}
	Since $A$ is a $C_0(Y)$-algebra, there is a continuous map $\tau : \Prim A \to Y$ by \cite[Proposition C.5]{TFB2}. By composing with the map $\sigma : Y \to X$, we obtain
	a continuous map $\sigma \circ \tau : \Prim A \to X$, which makes $A$ into a $C_0(X)$-algebra.
	
	By the discussion on page 355 of \cite{TFB2}, the homomorphism $\Phi_A^X : C_0(X) \to ZM(A)$ associated to the $C_0(X)$-action is defined to be
	\[
		\Phi_A^X(\varphi) = \varphi \circ (\sigma \circ \tau) = (\varphi \circ \sigma) \circ \tau = \Phi_A^Y(\varphi \circ \sigma),
	\]
	where $\Phi_A^Y : C_0(Y) \to ZM(A)$ is the homomorphism implementing the $C_0(Y)$-action on $A$. It is then clear that $\varphi \cdot a = (\varphi \circ \sigma) \cdot a$
	for all $\varphi \in C_0(X)$ and all $a \in A$. 
	
	We will now identify the structure of the fibers of the associated upper semicontinuous bundle $\B \to X$. Given $x \in X$, we have $\B_x = A/I_x$, where
	\[
		I_x = \overline{\spa} \bigl\{ \varphi \cdot a : \varphi \in C_0(X), \, \varphi(x) = 0, \, a \in A \bigr\}.
	\]
	Notice that if $\sigma(y) = x$ and $\varphi(x) = 0$, then
	\[
		(\varphi \cdot a)(y) = \varphi(\sigma(y)) a(y) = \varphi(x) a(y) = 0
	\]
	for all $a \in A$. It follows that $I_x$ consists precisely of the sections in $\Gamma_0(X, \B)$ that vanish on $\sigma^{-1}(x)$, hence $\B_x = \Gamma_0(\sigma^{-1}(x), \A)$. 
	(See \cite[Proposition 5.14]{geoff}, for example.) That $A$ and $B$ are isomorphic follows immediately from \cite[Theorem C.26(c)]{TFB2}, which guarantees the existence
	of a $C_0(X)$-linear isomorphism $\Theta : A \to B$. Given the manner in which we constructed the bundle $\B$, the isomorphism is of course given by restriction.
\end{proof}

We will apply Proposition \ref{prop:c0fiber} in a few different situations, but the first pertains specifically to transformation groupoids. Suppose $X$ is a left $G$-space, and let 
$p : X \to \go$ denote the structure map for the $G$-action. Recall that we assume $p$ is continuous and open. Let $G \ltimes X$ denote the associated transformation groupoid, 
and suppose $(\A, G \ltimes X, \alpha)$ is a separable groupoid dynamical system. Then $\A$ is an upper semicontinuous $C^*$-bundle over $X = (G \ltimes X)^{\scriptscriptstyle{(0)}}$, 
and we let $A = \Gamma_0(X, \A)$ denote the associated $C_0(X)$-algebra of sections. By Proposition \ref{prop:c0fiber}, $A$ is also a $C_0(\go)$-algebra, 
and the resulting upper semicontinuous $C^*$-bundle $\B \to \go$ has fibers $\B_u = \Gamma_0(p^{-1}(u), \A)$ for each $u \in \go$. If we let $B = \Gamma_0(\go, \B)$ denote 
the section algebra, then we also know that $B$ is isomorphic to $A$. In fact, we can use the $(G \ltimes X)$-action on $\A$ to construct a $G$-action on $\B$. 

\begin{prop}
\label{prop:action}
	For each $\gamma \in G$, define $\beta_\gamma : \B_{s(\gamma)} \to \B_{r(\gamma)}$ by
	\[
		\beta_\gamma(b)(x) = \alpha_{(\gamma, x)} \bigl( b(\gamma^{-1} \cdot x) \bigr).
	\]
	Then the family $\{\beta_\gamma \}_{\gamma \in G}$ defines a continuous action of $G$ on $\B$. Consequently, $(\B, G, \beta)$ is a groupoid dynamical system.
\end{prop}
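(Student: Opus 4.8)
The plan is to verify the three things the statement requires: that each $\beta_\gamma$ is a well-defined $C^*$-isomorphism $\B_{s(\gamma)}\to\B_{r(\gamma)}$, that the family $\{\beta_\gamma\}$ is compatible with the groupoid operations of $G$, and that the induced map is continuous; the first two are routine pointwise arguments. Fix $\gamma$. Since $p(x)=r(\gamma)$ for the relevant $x$, the action axioms show that $x\mapsto\gamma^{-1}\cdot x$ is a homeomorphism of $p^{-1}(r(\gamma))$ onto $p^{-1}(s(\gamma))$ with inverse $y\mapsto\gamma\cdot y$. Hence for $b\in\B_{s(\gamma)}=\Gamma_0(p^{-1}(s(\gamma)),\A)$ the assignment $x\mapsto\alpha_{(\gamma,x)}(b(\gamma^{-1}\cdot x))$ is a continuous section of $\A$ over $p^{-1}(r(\gamma))$ --- it is the composite of the continuous map $x\mapsto((\gamma,x),b(\gamma^{-1}\cdot x))$ with the continuous action $\alpha$ of $G\ltimes X$ on $\A$ --- and it vanishes at infinity because $\alpha_{(\gamma,x)}$ is isometric and $x\mapsto\gamma^{-1}\cdot x$ is a homeomorphism; thus $\beta_\gamma(b)\in\B_{r(\gamma)}$. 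As all operations on the section algebras are pointwise and each $\alpha_{(\gamma,x)}$ is a $*$-isomorphism, $\beta_\gamma$ is a $*$-homomorphism, and $\|\beta_\gamma(b)\|=\sup_x\|b(\gamma^{-1}\cdot x)\|=\|b\|$, so $\beta_\gamma$ is isometric.

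For the compatibility with the groupoid structure I would compute directly in $G\ltimes X$: when $(\gamma,\eta)\in\gtwo$ one has $(\gamma,x)(\eta,\gamma^{-1}\cdot x)=(\gamma\eta,x)$, hence $\alpha_{(\gamma,x)}\circ\alpha_{(\eta,\gamma^{-1}\cdot x)}=\alpha_{(\gamma\eta,x)}$, and combining this with $\eta^{-1}\gamma^{-1}\cdot x=(\gamma\eta)^{-1}\cdot x$ yields $\beta_\gamma\circ\beta_\eta=\beta_{\gamma\eta}$. Similarly $\beta_u=\id_{\B_u}$ for $u\in\go$, since $(u,x)$ is a unit of $G\ltimes X$ whenever $p(x)=u$. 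In particular $\beta_\gamma$ has two-sided inverse $\beta_{\gamma^{-1}}$ and is therefore an isomorphism, so $\{\beta_\gamma\}$ satisfies the algebraic axioms of a $G$-action on $\B$.

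It remains to prove that the action map $(\gamma,b)\mapsto\beta_\gamma(b)$ from $s^*\B$ to $\B$ is continuous, which is the substantive part. Given a net $(\gamma_i,b_i)\to(\gamma,b)$ in $s^*\B$, I would pick $a\in B=\Gamma_0(\go,\B)$ with $a(s(\gamma))=b$; since $\beta_{\gamma_i}$ is isometric, $\|\beta_{\gamma_i}(b_i)-\beta_{\gamma_i}(a(s(\gamma_i)))\|=\|b_i-a(s(\gamma_i))\|\to 0$, so it suffices to show that $\gamma\mapsto\beta_\gamma(a(s(\gamma)))$ is continuous $G\to\B$ for each $a\in B$. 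Writing $\tilde a=\Theta^{-1}(a)\in A=\Gamma_0(X,\A)$ with $\Theta$ as in Proposition \ref{prop:c0fiber}, the same isometric estimate together with the density of $\Gamma_c(X,\A)$ in $A$ reduces us to $\tilde a\in\Gamma_c(X,\A)$, say with $\supp\tilde a=K$. Now continuity of the dynamical system $(\A,G\ltimes X,\alpha)$ shows that $F(\eta,y):=\alpha_{(\eta,y)}(\tilde a(\eta^{-1}\cdot y))\in\A_y$ defines a continuous section $F$ of $r^*\A$ over $G\ltimes X$, and by construction $\beta_\gamma(a(s(\gamma)))(x)=F(\gamma,x)$ for $x\in p^{-1}(r(\gamma))$.

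The final step, and the main obstacle, is to convert the continuity of $F$ into the uniform estimate that convergence in the bundle $\B$ requires. I would extend the (compactly supported) section $x\mapsto F(\gamma,x)$ of $\A$ over $p^{-1}(r(\gamma))$ to a section $\tilde g\in\Gamma_c(X,\A)$ --- possible since restriction $\Gamma_0(X,\A)\to\B_{r(\gamma)}$ is onto by Proposition \ref{prop:c0fiber}, followed by a cutoff --- and put $g=\Theta(\tilde g)\in B$, so that $g(r(\gamma))=\beta_\gamma(a(s(\gamma)))$; by the usual description of the topology of an upper semicontinuous bundle it then suffices to show that $\sup_{x\in p^{-1}(r(\gamma_i))}\|F(\gamma_i,x)-\tilde g(x)\|\to 0$ whenever $\gamma_i\to\gamma$. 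For $\gamma_i$ eventually in a fixed compact neighbourhood $C$ of $\gamma$, $F(\eta,x)\ne 0$ with $\eta\in C$ forces $x\in C\cdot K$, which is compact (the image of a closed subset of $C\times K$ under the continuous action map), so all the functions $x\mapsto\|F(\eta,x)-\tilde g(x)\|$ ($\eta\in C$) are supported in the single compact set $M:=(C\cdot K)\cup\supp\tilde g$. Since $(\eta,y)\mapsto\|F(\eta,y)-\tilde g(y)\|$ is upper semicontinuous on $G\ltimes X$ and vanishes on $\{\gamma\}\times p^{-1}(r(\gamma))$, a failure of the estimate would produce $\epsilon>0$ and points $x_i\in M\cap p^{-1}(r(\gamma_i))$ with $\|F(\gamma_i,x_i)-\tilde g(x_i)\|\ge\epsilon$; passing to a subnet with $x_i\to x_*\in M$ gives $(\gamma_i,x_i)\to(\gamma,x_*)$ in $G\ltimes X$ with $p(x_*)=r(\gamma)$, and upper semicontinuity forces $\limsup_i\|F(\gamma_i,x_i)-\tilde g(x_i)\|\le 0$, a contradiction. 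This compactness-and-semicontinuity argument --- and the reduction to compactly supported $\tilde a$ that makes it available --- is where the real content lies; granting it, $\beta$ is continuous and $(\B,G,\beta)$ is a groupoid dynamical system.
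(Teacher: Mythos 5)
Your proof is correct, but it takes a genuinely different route from the paper's. The paper does not verify continuity of the action map $(\gamma,b)\mapsto\beta_\gamma(b)$ by hand. Instead it invokes \cite[Lemma 4.3]{mw08}, which says that a family of fiberwise isomorphisms defines a continuous action if and only if the associated map $\beta : s^*B \to r^*B$, $\beta(f)(\gamma)=\beta_\gamma(f(\gamma))$, is a $C_0(G)$-linear isomorphism of the pullback algebras; it then factors $\beta = \iota_r^{-1}\circ\alpha\circ\lt\circ\iota_s$, where $\iota_s, \iota_r$ identify $s^*B$ and $r^*B$ with $\Gamma_0(G*X,s^*\A)$ and $\Gamma_0(G\ltimes X,r^*\A)$, $\lt$ is the translation isomorphism, and $\alpha$ is the $C_0(G\ltimes X)$-linear isomorphism already attached to the system $(\A,G\ltimes X,\alpha)$ --- each shown to be $C_0(G)$-linear in preceding lemmas. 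Your argument replaces all of this with a direct verification at the level of the bundle topology: the reduction to a continuous section $a$ through $b$ via isometry of $\beta_{\gamma_i}$, the reduction to compactly supported $\tilde a$, and the compactness-plus-upper-semicontinuity contradiction argument are all sound (the key points --- that $F$ is a continuous section of $r^*\A$, that its support in the $X$-variable is controlled by $C\cdot K$, and that the norm of a continuous section of an upper semicontinuous bundle is upper semicontinuous --- are exactly what is needed, and you use them correctly). What the paper's approach buys is economy: the isomorphisms $\iota_s$, $\iota_r$, and $\lt$ are needed again later (in Theorem \ref{thm:fullcross} and in the construction of the intertwining unitary $U$), so establishing them once and deducing the continuity of $\beta$ as a corollary avoids repeating bundle-topology epsilonics. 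What your approach buys is self-containedness: it does not rely on the characterization in \cite[Lemma 4.3]{mw08} and makes the continuity mechanism completely explicit. Both are valid proofs of the proposition.
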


To prove this proposition, we will proceed via a series of lemmas. First we need to make a couple of observations. Let $q : G \times X \to G$ denote the projection onto the first 
coordinate. Since $G \ltimes X \subseteq G \times X$, $q$ restricts to a continuous surjection $G \ltimes X \to G$. As a result, the pullback 
algebras $\Gamma_0(G \ltimes X, s^*\A)$ and $\Gamma_0(G \ltimes X, r^*\A)$ become $C_0(G)$-algebras via Proposition \ref{prop:c0fiber}. Furthermore, for each $\gamma \in G$
the corresponding fiber is
\[
	\Gamma_0(G \ltimes X, r^*\A)(\gamma) = \Gamma_0(q^{-1}(\gamma), r^*\A).
\]
Notice that
\[
	q\vert_{G \ltimes X}^{-1}(\gamma) = \{(\gamma, x) \in G \ltimes X : p(x) = r(\gamma)\} = \{\gamma\} \times p^{-1}(r(\gamma)),
\]
and for each $x \in p^{-1}(r(\gamma))$,
\[
	(r^*\A)_{(\gamma, x)} = \A_{r(\gamma, x)} = \A_x.
\]
Thus
\[
	\Gamma_0(G \ltimes X, r^*\A)(\gamma) = \Gamma_0(p^{-1}(r(\gamma)), \A) = \B_{r(\gamma)}.
\]
It will also be helpful to consider the set
\[
	G * X = \{ (\gamma, x) \in G \times X : s(\gamma) = p(x) \}.
\]
As we mentioned in Section 2, $G * X$ can be made into a groupoid in a natural way. We will not need to consider the groupoid structure on $G*X$, but it is worth noting that 
the unit space can be identified with $X$, and the source map is given by $s(\gamma, x) = x$. Thus we have a continuous surjection $q : G * X \to G$, so 
$\Gamma_0(G * X, s^*\A)$ is a $C_0(G)$-algebra with fibers given by
\[
	\Gamma_0(G*X, s^*\A)(\gamma) = \Gamma_0(q^{-1}(\gamma), s^*\A) = \Gamma_0(p^{-1}(s(\gamma)), \A) = \B_{s(\gamma)}.
\]

Now we can begin the proof. Compare the first lemma to \cite[Lemma 4.37]{geoff} and the implementation of that result in the proof of \cite[Proposition 4.38]{geoff}.

\begin{lem}
\label{lem:lemmaone}
	Let $s^*B = \Gamma_0(G, s^*\B)$ and $r^*B = \Gamma_0(G, r^*\B)$ denote the pullback $C^*$-algebras. There are $C_0(G)$-linear isomorphisms
	\[
		\iota_s : s^*B \to \Gamma_0(G*X, s^*\A), \quad \iota_r : r^*B \to \Gamma_0(G \ltimes X, r^*\A)
	\]
	given by
	\[
		\iota_s(f)(\gamma, x) = f(\gamma)(x), \quad \iota_r(g)(\gamma, x) = g(\gamma)(x)
	\]
	for $f \in s^*B$ and $g \in r^*B$.
\end{lem}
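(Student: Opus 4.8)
The plan is to identify each pullback algebra with a section algebra over a suitable base and then invoke Proposition~\ref{prop:c0fiber} (or its companion Theorem~C.26(c) of \cite{TFB2}) to obtain the isomorphisms. Consider $\iota_r$ first. The algebra $r^*B = \Gamma_0(G, r^*\B)$ is built from the bundle $r^*\B \to G$, whose fiber over $\gamma$ is $\B_{r(\gamma)} = \Gamma_0(p^{-1}(r(\gamma)), \A)$ by Proposition~\ref{prop:c0fiber} applied to $p : X \to \go$. On the other hand, the discussion preceding this lemma shows that $\Gamma_0(G \ltimes X, r^*\A)$, regarded as a $C_0(G)$-algebra via the projection $q : G \ltimes X \to G$, has fiber over $\gamma$ equal to $\Gamma_0(q^{-1}(\gamma), r^*\A) = \Gamma_0(p^{-1}(r(\gamma)), \A) = \B_{r(\gamma)}$. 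So the two $C_0(G)$-algebras have the same primitive ideal space fibration over $G$ and fiberwise-isomorphic fibers; the content is that this fiberwise identification is implemented by the honest formula $\iota_r(g)(\gamma, x) = g(\gamma)(x)$ and is a $*$-isomorphism onto $\Gamma_0(G \ltimes X, r^*\A)$.

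The cleanest route is to build $\iota_r$ directly and verify it is an isometric $*$-homomorphism with dense range. First I would check that $\iota_r(g)$ is a well-defined section of $r^*\A \to G \ltimes X$: for fixed $\gamma$, $x \mapsto g(\gamma)(x)$ is a section of $\A$ over $p^{-1}(r(\gamma))$ since $g(\gamma) \in \B_{r(\gamma)} = \Gamma_0(p^{-1}(r(\gamma)), \A)$, and $(r^*\A)_{(\gamma,x)} = \A_x$ as noted above, so the values land in the correct fibers. Continuity of $(\gamma, x) \mapsto g(\gamma)(x)$ on $G \ltimes X$ follows from continuity of $g$ as a section of $r^*\B$ together with the way the topology on $r^*\B$ is built from that of $\B$ (this is essentially the content of \cite[Lemma~4.37]{geoff}); vanishing at infinity is inherited from $g \in \Gamma_0$. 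That $\iota_r$ is a $*$-homomorphism is immediate from the fiberwise (pointwise-in-$x$) nature of all operations on both sides. It is $C_0(G)$-linear because the $C_0(G)$-action on $\Gamma_0(G \ltimes X, r^*\A)$ is, by Proposition~\ref{prop:c0fiber}, precomposition by $q$, so $(\varphi \cdot \iota_r(g))(\gamma,x) = \varphi(q(\gamma,x)) g(\gamma)(x) = \varphi(\gamma) g(\gamma)(x) = \iota_r(\varphi \cdot g)(\gamma, x)$. Isometry: $\norm{\iota_r(g)} = \sup_{(\gamma,x)} \norm{\iota_r(g)(\gamma,x)} = \sup_\gamma \sup_{x \in p^{-1}(r(\gamma))} \norm{g(\gamma)(x)} = \sup_\gamma \norm{g(\gamma)}_{\B_{r(\gamma)}} = \norm{g}$. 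For surjectivity, one shows the range is a $C_0(G)$-submodule of $\Gamma_0(G \ltimes X, r^*\A)$ that separates points of $G$ in the appropriate sense and contains enough elements fiberwise; alternatively, appeal to \cite[Theorem~C.26(c)]{TFB2} exactly as in the proof of Proposition~\ref{prop:c0fiber}, since a $C_0(G)$-linear isomorphism of the $C_0(G)$-algebras exists abstractly and must be the restriction map, which is precisely $\iota_r$. The argument for $\iota_s$ is identical, using $G * X$ in place of $G \ltimes X$ and the fiber computation $\Gamma_0(G*X, s^*\A)(\gamma) = \Gamma_0(p^{-1}(s(\gamma)), \A) = \B_{s(\gamma)}$ recorded above; here $(s^*\A)_{(\gamma,x)} = \A_{s(\gamma,x)} = \A_x$ for $(\gamma,x) \in G*X$, so again $\iota_s(f)(\gamma,x) = f(\gamma)(x)$ lands in the right fiber.

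The main obstacle I anticipate is not any algebraic identity — those are all formal, since everything in sight is defined fiberwise over $X$ — but rather the topological bookkeeping: verifying that $\iota_r(g)$ is genuinely continuous on $G \ltimes X$ and vanishes at infinity there, and dually that every element of $\Gamma_0(G \ltimes X, r^*\A)$ arises this way. This requires being careful about how the topologies on the iterated pullback bundles $r^*\B \to G$ and $r^*\A \to G \ltimes X$ interact, and it is exactly the point where \cite[Lemma~4.37]{geoff} (and the surjectivity half of \cite[Theorem~C.26(c)]{TFB2}) does the heavy lifting. I would state the $\iota_r$ case in full detail and then simply remark that the $\iota_s$ case follows by the same argument, \emph{mutatis mutandis}, with $G*X$ replacing $G \ltimes X$.
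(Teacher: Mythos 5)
Your proposal is correct and follows essentially the same route as the paper: define $\iota_r$ by the evaluation formula, check it is an isometric $C_0(G)$-linear $*$-homomorphism, and get surjectivity from $C_0(G)$-invariance of the range plus fiberwise density via \cite[Proposition C.24]{TFB2}. The one step you defer to a citation---continuity and vanishing at infinity of $\iota_r(g)$---is the step the paper actually carries out, by verifying it explicitly on the elementary tensors $\varphi \otimes b$ with $\varphi \in C_c(G)$ and $b \in B$ (using that $x \mapsto b(p(x))(x) = \Theta^{-1}(b)(x)$ lies in $A$) and then extending to all of $r^*B$ by density and isometry.
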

\begin{proof}
	We will work out the proof for $\iota_r$, and things work similarly for $\iota_s$. First notice that if $f \in r^*B$, then $\iota_r(f)$ is clearly a section of $r^*\A$. Moreover, 
	$\iota_r$ is easily seen to be a homomorphism. It is also straightforward to check that $\iota_r$ is isometric: if $f \in r^*B$, then
	\begin{align*}
		\norm{\iota_r(f)}_\infty &= \sup_{(\gamma, x) \in G \ltimes X} \norm{\iota_r(f)(\gamma, x)} \\
			&= \sup_{(\gamma, x) \in G \ltimes X} \norm{f(\gamma)(x)} \\
			&= \sup_{\gamma \in G} \sup_{x \in p^{-1}(r(\gamma))} \norm{f(\gamma)(x)} \\
			&= \sup_{\gamma \in G} \norm{f(\gamma)}_\infty \\
			&= \norm{f}_\infty.
	\end{align*}
	However, it is not clear yet that $\iota_r$ even maps into $\Gamma_0(G \ltimes X, r^*\A)$. That is, we need to check that $\iota_r(f)$ is continuous and vanishes 
	at infinity for all $f \in r^*B$.
	
	Recall that elementary tensors of the form $\varphi \otimes b$ for $\varphi \in C_c(G)$ and $b \in B$, where
	\[
		(\varphi \otimes b)(\gamma) = \varphi(\gamma) b(r(\gamma)),
	\]
	span a dense subalgebra of $r^*B$. (See \cite[\S 3.2]{lalonde2014}, for example.) On such elementary tensors, we have
	\[
		\iota_r(\varphi \otimes b)(\gamma, x) = (\varphi \otimes b)(\gamma)(x) = \varphi(\gamma) b(r(\gamma))(x).
	\]
	Since the function $x \mapsto b(p(x))(x)$ vanishes at infinity by Proposition \ref{prop:c0fiber} (it is precisely $\Theta^{-1}(b) \in A$), the set
	\[
		\{x \in X : \norm{b(p(x))(x)} \geq \varepsilon \}
	\]
	is compact for all $\varepsilon > 0$. Since $\varphi$ is compactly supported on $G$, it is easy to see that $\iota_r(\varphi \otimes b)$ vanishes at infinity on $G \ltimes X$. 
	It remains to see that $\iota_r(\varphi \otimes b)$ is continuous. Suppose $(\gamma_i, x_i) \to (\gamma, x)$ in $G \ltimes X$. Then $\varphi(\gamma_i) \to \varphi(\gamma)$
	and $b(r(\gamma_i))(x_i) \to b(r(\gamma))(x)$ in $\A$, so
	\[
		\bigl( (\gamma_i, x_i), b(r(\gamma_i))(x_i) \bigr) \to \bigl( (\gamma, x), b(r(\gamma))(x) \bigr)
	\]
	in $(G \ltimes X) \times \A$. It follows that 
	\[
		\bigl( (\gamma_i, x_i), \varphi(\gamma_i) b(r(\gamma_i))(x_i) \bigr) \to \bigl( (\gamma, x), \varphi(\gamma) b(r(\gamma))(x) \bigr)
	\]
	in $r^*\A$, so $\iota_r(\varphi \otimes b) \in \Gamma_0(G \ltimes X, r^*\A)$. Since $C_c(G) \odot B$ is dense in $r^*B$ and $\iota_r$ is isometric, it follows that 
	$\iota_r(f)$ is continuous and vanishes at infinity for any $f \in r^*B$.
	
	Now suppose $f \in r^*B$ with $\iota_r(f) = 0$. Then $\iota_r(f)(\gamma, x) = 0$ for all $(\gamma, x) \in G \ltimes X$, so $f(\gamma)(x) = 0$ for all $\gamma \in G$ and $x \in X$.
	Thus $f=0$, and $\iota_r$ is injective. To see that $\iota_r$ is surjective, it suffices to show that the range of $\iota_r$ is closed under the $C_0(G)$-action on 
	$\Gamma_0(G \ltimes X, r^*\A)$ and is fiberwise dense in $r^*\A$. First observe that for each $f \in r^*B$ and $\varphi \in C_0(G)$,
	\[
		\bigl( \varphi \cdot \iota_r(f) \bigr)(\gamma, x) = \varphi(\gamma) f(\gamma)(x) = \bigl( \varphi \cdot f \bigr)(\gamma)(x) = \iota_r(\varphi \cdot f)(\gamma, x),
	\]
	so the range of $\iota_r$ is invariant under the $C_0(G)$-action. Moreover, this computation shows that $\iota_r$ is $C_0(G)$-linear. Now fix $(\gamma, x) \in G \ltimes X$ 
	and let $a \in (r^*\A)_{(\gamma, x)} = \A_x$. Choose $b \in B$ with $b(p(x))(x) = a$, and let $\varphi \in C_c(G)$ with $\varphi(\gamma) = 1$. Then
	\[
		\iota_r(\varphi \otimes b)(\gamma, x) = \varphi(\gamma) b(r(\gamma))(x) = a.
	\]
	Thus the image of $\iota_r$ is fiberwise dense, and it follows from \cite[Proposition C.24]{TFB2} that $\iota_r$ is surjective.
\end{proof}

\begin{lem}
\label{lem:lt}
	The map $\lt : \Gamma_0(G*X, s^*\A) \to \Gamma_0(G \ltimes X, s^*\A)$ defined by
	\[
		\lt(f)(\gamma, x) = f(\gamma, \gamma^{-1} \cdot x)
	\]
	is a $C_0(G)$-linear isomorphism, which restricts to a surjection from $\Gamma_c(G*X, s^*\A)$ onto $\Gamma_c(G \ltimes X, s^*\A)$. Moreover, $\lt$ is 
	continuous with respect to the inductive limit topology.
\end{lem}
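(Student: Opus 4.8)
The plan is to realize $\lt$ as the map on section algebras induced by a homeomorphism between $G*X$ and $G\ltimes X$. Define $\Psi : G\ltimes X \to G*X$ by $\Psi(\gamma, x) = (\gamma, \gamma^{-1}\cdot x)$. First I would check this is well-defined: if $(\gamma, x)\in G\ltimes X$ then $r(\gamma) = p(x)$, so $\gamma^{-1}\cdot x$ is defined, and $p(\gamma^{-1}\cdot x) = s(\gamma)$, whence $(\gamma, \gamma^{-1}\cdot x)\in G*X$. The map $(\gamma, y)\mapsto (\gamma, \gamma\cdot y)$ from $G*X$ to $G\ltimes X$ is a two-sided inverse for $\Psi$, using $\gamma\cdot(\gamma^{-1}\cdot x) = r(\gamma)\cdot x = p(x)\cdot x = x$ and the analogous computation for the other composite; both maps are continuous since the $G$-action and inversion in $G$ are continuous. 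Thus $\Psi$ is a homeomorphism, and it preserves the first coordinate, so $q\circ\Psi = q$ where $q$ denotes projection onto $G$.

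Next I would identify the relevant pullback bundles. The source map of $G*X$ is $(\gamma, x)\mapsto x$ and that of $G\ltimes X$ is $(\gamma, x)\mapsto\gamma^{-1}\cdot x$, and these satisfy $s\circ\Psi = s$ (with the source maps of the respective groupoids). Hence the fibre of $s^*\A$ over $(\gamma, x)\in G\ltimes X$ is $\A_{\gamma^{-1}\cdot x}$, which is exactly the fibre of $s^*\A$ over $\Psi(\gamma, x)\in G*X$; moreover $s^*\A\big|_{G\ltimes X}$ and the pullback of $s^*\A\big|_{G*X}$ along $\Psi$ carry the same topology, since both are induced from $\A$ via the single map $(\gamma, x)\mapsto\gamma^{-1}\cdot x$. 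It follows that $\lt(f) = f\circ\Psi$, and because $\Psi$ is a homeomorphism this defines a bijective $*$-homomorphism $\Gamma_0(G*X, s^*\A)\to\Gamma_0(G\ltimes X, s^*\A)$: continuity and vanishing at infinity of $f\circ\Psi$ follow from those of $f$ (a homeomorphism is proper), with inverse $g\mapsto g\circ\Psi^{-1}$. The $C_0(G)$-linearity is immediate from $q\circ\Psi = q$, since $(\varphi\cdot\lt(f))(\gamma, x) = \varphi(\gamma) f(\gamma, \gamma^{-1}\cdot x) = (\varphi\cdot f)(\gamma, \gamma^{-1}\cdot x) = \lt(\varphi\cdot f)(\gamma, x)$.

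Finally, because $\supp\lt(f) = \Psi^{-1}(\supp f)$ and $\Psi^{-1}$ is continuous, $\lt(f)$ has compact support exactly when $f$ does, so $\lt$ restricts to a bijection $\Gamma_c(G*X, s^*\A)\to\Gamma_c(G\ltimes X, s^*\A)$. The same observations give continuity in the inductive limit topology: if $f_i\to f$ uniformly with $\supp f_i$ eventually contained in a fixed compact set $K\subseteq G*X$, then $\norm{\lt(f_i) - \lt(f)}_\infty = \norm{f_i - f}_\infty\to 0$ (as $\Psi$ is onto) and $\supp\lt(f_i) = \Psi^{-1}(\supp f_i)$ is eventually contained in the compact set $\Psi^{-1}(K)$. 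I do not expect a serious obstacle here; the only point needing a little care is the topological identification $s^*\A\big|_{G\ltimes X} = \Psi^*\bigl(s^*\A\big|_{G*X}\bigr)$ as upper semicontinuous $C^*$-bundles (not merely fibrewise), which reduces to both bundles being pulled back from $\A$ along the same map, together with the standard fact that pulling $C_0$-sections back along a homeomorphism yields an isomorphism of section algebras.
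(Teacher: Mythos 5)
Your proof is correct and takes essentially the same route as the paper's: both arguments rest on the map $(\gamma, x) \mapsto (\gamma, \gamma^{-1}\cdot x)$ and its inverse $(\gamma, y) \mapsto (\gamma, \gamma\cdot y)$, with the paper verifying continuity, supports, and invertibility directly where you package the same computations as composition with a homeomorphism compatible with the bundle projections. The one subtlety you flag---that $s^*\A$ over $G\ltimes X$ agrees with the pullback along this homeomorphism of $s^*\A$ over $G*X$, both being induced from $\A$ via $(\gamma,x)\mapsto\gamma^{-1}\cdot x$---is handled correctly.
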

\begin{proof}
	We first need to check that $\lt$ actually maps into $\Gamma_0(G \ltimes X, s^*\A)$. To that end, suppose first that 
	$f \in \Gamma_c(G*X, s^*\A)$. It is easy to see that $\lt(f)$ is continuous---if $(\gamma_i, x_i) \to (\gamma, x)$ in $G \ltimes X$, then 
	$(\gamma_i, \gamma_i^{-1} \cdot x_i) \to (\gamma, \gamma^{-1} \cdot x)$ in $G*X$, so
	\[
		\lt(f)(\gamma_i, x_i) = f(\gamma_i, \gamma_i^{-1} \cdot x_i) \to f(\gamma, \gamma^{-1} \cdot x) = \lt(f)(\gamma, x).
	\]
	Now let $K = \supp(f)$. Notice that $\supp(\lt(f))$ is precisely the image of $K$ under the continuous map $(\gamma, x) \mapsto (\gamma, \gamma \cdot x)$ from 
	$G * X \to G \ltimes X$, so it is compact. Thus $\lt(f)$ is compactly supported. 

	Now suppose $f_i \to f$ in $\Gamma_c(G * X, s^*\A)$ with respect to the inductive limit topology. Then $f_i \to f$ uniformly. A straightforward computation like the one from 
	Lemma \ref{lem:lemmaone} shows that $\lt$ is isometric with respect to the supremum norm, so $\lt(f_i) \to \lt(f)$ uniformly as well. Furthermore, there is a compact set $K$ 
	such that $\supp(f_i) \subseteq K$ eventually. The same argument as above shows that the sets $\supp(\lt(f_i))$ are eventually contained in the image of $K$ under the map 
	$(\gamma, x) \mapsto (\gamma, \gamma \cdot x)$, so $\lt(f_i) \to \lt(f)$ in the inductive limit topology.
	
	Since $\lt$ is isometric, it extends to an injective map from $\Gamma_0(G*X, s^*\A)$ to $\Gamma_0(G \ltimes X, s^*\A)$. It is not 
	hard to see that $\lt$ is an isomorphism, since it has an inverse $\lt^{-1} : \Gamma_0(G \ltimes X, s^*\A) \to \Gamma_0(G * X, s^*\A)$ 
	given by
	\[
		\lt^{-1}(f)(\gamma, x) = f(\gamma, \gamma \cdot x).
	\]
	It is also worth noting that arguments like those above show that $\lt^{-1}$ takes compactly supported sections to compactly supported
	sections, and it is continuous in the inductive limit topology. 
	
	All that remains is to see that $\lt$ is $C_0(G)$-linear. Let $f \in \Gamma_0(G*X, s^*\A)$ and $\varphi \in C_0(G)$. Then for all $(\gamma, x) \in G \ltimes X$ we have
	\begin{align*}
		\lt(\varphi \cdot f)(\gamma, x) &= (\varphi \cdot f)(\gamma, \gamma^{-1} \cdot x) \\
			&= \varphi(\gamma) f(\gamma, \gamma^{-1} \cdot x) \\
			&= \varphi(\gamma) \lt(f)(\gamma, x) \\
			&= (\varphi \cdot \lt(f))(\gamma, x). \qedhere
	\end{align*}
\end{proof}

Since $(\A, G \ltimes X, \alpha)$ is a groupoid dynamical system, there is a $C_0(G \ltimes X)$-linear isomorphism $\alpha : s^*A \to r^*A$ given by
$\alpha(f)(\gamma, x) = \alpha_{(\gamma, x)}(f(\gamma, x))$ as a result of \cite[Lemma 4.3]{mw08}.

\begin{lem}
	The isomorphism $\alpha : s^*A \to r^*A$ associated to the $(G \ltimes X)$-action on $\A$ is $C_0(G)$-linear.
\end{lem}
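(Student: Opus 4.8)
The plan is to deduce the $C_0(G)$-linearity of $\alpha$ from a direct pointwise computation on sections, using two facts already in hand: the explicit form of the $C_0(G)$-action furnished by Proposition \ref{prop:c0fiber} applied to the projection $q : G \ltimes X \to G$, and the fiberwise description of $\alpha$ coming from \cite[Lemma 4.3]{mw08}. Recall that $s^*A$ and $r^*A$ are pullback $C_0(G \ltimes X)$-algebras, and that Proposition \ref{prop:c0fiber} (with $\sigma = q$) makes each of them into a $C_0(G)$-algebra whose action is given on sections by $(\varphi \cdot f)(\gamma, x) = \varphi(q(\gamma, x)) f(\gamma, x)$; since we identify $q\vert_{G \ltimes X}(\gamma, x) = \gamma$, this is simply $(\varphi \cdot f)(\gamma, x) = \varphi(\gamma) f(\gamma, x)$. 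On the other side, \cite[Lemma 4.3]{mw08} tells us $\alpha(f)(\gamma, x) = \alpha_{(\gamma,x)}\bigl(f(\gamma, x)\bigr)$, and each $\alpha_{(\gamma,x)} : \A_{s(\gamma,x)} \to \A_{r(\gamma,x)}$, being a $*$-isomorphism of $C^*$-algebras, is in particular $\C$-linear.

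With these identifications in place, the computation is immediate: for $f \in s^*A$, $\varphi \in C_0(G)$, and $(\gamma, x) \in G \ltimes X$,
\begin{align*}
	\alpha(\varphi \cdot f)(\gamma, x) &= \alpha_{(\gamma,x)}\bigl( (\varphi \cdot f)(\gamma, x) \bigr) = \alpha_{(\gamma,x)}\bigl( \varphi(\gamma)\, f(\gamma, x) \bigr) \\
		&= \varphi(\gamma)\, \alpha_{(\gamma,x)}\bigl( f(\gamma, x) \bigr) = \varphi(\gamma)\, \alpha(f)(\gamma, x) = \bigl( \varphi \cdot \alpha(f) \bigr)(\gamma, x).
\end{align*}
Since two sections of $r^*\A$ that agree at every point of $G \ltimes X$ coincide, this gives $\alpha(\varphi \cdot f) = \varphi \cdot \alpha(f)$, which is exactly $C_0(G)$-linearity.

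A conceptually cleaner way to phrase the same argument, which I would mention for transparency, is to note that the $C_0(G)$-action on either pullback algebra is obtained from the ambient $C_0(G \ltimes X)$-action by restricting along the nondegenerate homomorphism $C_0(G) \to C_b(G \ltimes X) = M\bigl(C_0(G \ltimes X)\bigr)$, $\varphi \mapsto \varphi \circ q$ --- this is precisely the content of the identity $\Phi_A^X(\varphi) = \Phi_A^Y(\varphi \circ \sigma)$ in the proof of Proposition \ref{prop:c0fiber}. Since $\alpha$ is $C_0(G \ltimes X)$-linear by hypothesis, it is automatically linear for the induced multiplier action, so $\alpha(\varphi \cdot f) = \alpha\bigl((\varphi \circ q) \cdot f\bigr) = (\varphi \circ q) \cdot \alpha(f) = \varphi \cdot \alpha(f)$. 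There is no real obstacle here: the only point requiring care is correctly unwinding the identification $q\vert_{G \ltimes X}(\gamma, x) = \gamma$ so that the $C_0(G)$-action on the fiber over $(\gamma, x)$ is scalar multiplication by $\varphi(\gamma)$; everything else is bookkeeping recording that the $(G \ltimes X)$-equivariant structure on $\A$ is compatible with the reindexing of Proposition \ref{prop:c0fiber}.
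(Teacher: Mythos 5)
Your proof is correct and is essentially the paper's own argument: the same pointwise computation $\alpha(\varphi \cdot f)(\gamma, x) = \varphi(\gamma)\,\alpha(f)(\gamma, x)$ using the fiberwise description of $\alpha$ and the $C_0(G)$-action pulled back along $q$. The closing remark about factoring the $C_0(G)$-action through the multiplier homomorphism $\varphi \mapsto \varphi \circ q$ is a nice conceptual gloss but adds nothing the direct computation doesn't already establish.
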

\begin{proof}
	Let $f \in s^*A = \Gamma_0(G \ltimes X, s^*\A)$ and $\varphi \in C_0(G)$. Then
	\begin{align*}
		\alpha( \varphi \cdot f)(\gamma, x) &= \alpha_{(\gamma, x)} \bigl( (\varphi \cdot f)(\gamma, x) \bigr) \\
			&= \alpha_{(\gamma, x)} \bigl( \varphi(\gamma) f(\gamma, x) \bigr) \\
			&= \varphi(\gamma) \alpha_{(\gamma, x)} \bigl( f(\gamma, x) \bigr) \\
			&= \varphi(\gamma) \alpha(f)(\gamma, x) \\
			&= \bigl( \varphi \cdot \alpha(f) \bigr)(\gamma, x)
	\end{align*}
	for all $(\gamma, x) \in G \ltimes X$. Thus $\alpha(\varphi \cdot f) = \varphi \cdot \alpha(f)$, so $\alpha$ is $C_0(G)$-linear.
\end{proof}

\begin{proof}[Proof of Proposition \ref{prop:action}]
	In light of \cite[Lemma 4.3]{mw08}, it suffices to show that the map $\beta : s^*B \to r^*B$ defined by $\beta(f)(\gamma) = \beta_\gamma(f(\gamma))$ is a 
	$C_0(G)$-linear isomorphism. Let $f \in s^*B$. Then $\iota_s(f) \in \Gamma_0(G * X, s^*\A)$ is given by
	\[
		\iota_s(f)(\gamma, x) = f(\gamma)(x),
	\]
	and applying $\lt$ we get
	\[
		(\lt \circ \iota_s)(f)(\gamma, x) = \iota_s(f)(\gamma, \gamma^{-1} \cdot x) = f(\gamma)(\gamma^{-1} \cdot x).
	\]
	Now apply $\alpha$:
	\[
		\alpha \bigl((\lt \circ \iota_s)(f) \bigr)(\gamma, x) = \alpha_{(\gamma, x)} \bigl( (\lt \circ \iota_s)(f)(\gamma, x) \bigr) = \alpha_{(\gamma, x)} \bigl( f(\gamma)(\gamma^{-1} \cdot x) 
		\bigr).
	\]
	Equivalently, we have
	\[
		\alpha \bigl((\lt \circ \iota_s)(f) \bigr)(\gamma, x) = \beta_\gamma(f(\gamma))(x) = \beta(f)(\gamma)(x).
	\]
	Using the fact that the left side is $(\iota_r^{-1} \circ \alpha \circ \lt \circ \iota_s)(f)(\gamma)(x)$, it follows that
	\[
		\beta(f) = (\iota_r^{-1} \circ \alpha \circ \lt \circ \iota_s)(f).
	\]
	Since each of the functions in the composition is a $C_0(G)$-linear isomorphism, it follows that $\beta$ is also a $C_0(G)$-linear isomorphism.
	
	It is also necessary to check that $\beta_\gamma \circ \beta_\eta = \beta_{\gamma \eta}$ whenever $(\gamma, \eta) \in \gtwo$. This is straightforward:
	\begin{align*}
		\beta_\gamma(\beta_\eta(b))(x) &= \alpha_{(\gamma, x)} \bigl( \beta_\eta(b)(\gamma^{-1} \cdot x) \bigr) \\
			&= \alpha_{(\gamma, x)} \bigl( \alpha_{(\eta, \gamma^{-1} \cdot x)} \bigl( b( \eta^{-1} \cdot (\gamma^{-1} \cdot x)) \bigr) \bigr) \\
			&= \alpha_{(\gamma \eta, x)} \bigl( b( (\gamma \eta)^{-1} \cdot x) \bigr) \\
			&= \beta_{\gamma \eta}(b)(x)
	\end{align*}
	for all $b \in \B$. Therefore, $(\B, G, \beta)$ is a groupoid dynamical system.
\end{proof}

Now we have two closely related dynamical systems, $(\A, G \ltimes X, \alpha)$ and $(\B, G, \beta)$. We claim that they yield isomorphic crossed products. This result can 
be thought of as simultaneously generalizing \cite[Proposition 4.38]{geoff} and \cite[Remark 3.61]{jonbrown}.

\begin{thm}
\label{thm:fullcross}
	The map $\Phi : \Gamma_c(G \ltimes X, r^*\A) \to \Gamma_c(G, r^*\B)$ given by
	\[
		\Phi(f)(\gamma)(x) = f(\gamma, x)
	\]
	is a $*$-homomorphism, which extends to an isomorphism $\bar{\Phi} : \A \rtimes_\alpha (G \ltimes X) \to \B \rtimes_\beta G$.
\end{thm}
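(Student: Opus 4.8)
The plan is to check that $\Phi$ is an injective $*$-homomorphism of $\Gamma_c(G \ltimes X, r^*\A)$ into $\Gamma_c(G, r^*\B)$ whose range is dense in the inductive limit topology, and then to promote this to the asserted isomorphism by invoking the universal property of the full crossed product at both ends.

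First I would confirm that $\Phi$ is well defined. For $f \in \Gamma_c(G \ltimes X, r^*\A)$ the section $x \mapsto f(\gamma, x)$ on $p^{-1}(r(\gamma))$ is continuous and vanishes at infinity, so it lies in $\B_{r(\gamma)} = \Gamma_0(p^{-1}(r(\gamma)), \A)$; indeed $\Phi$ is just the restriction of the isomorphism $\iota_r^{-1}$ of Lemma \ref{lem:lemmaone}, so continuity of $\gamma \mapsto \Phi(f)(\gamma)$ comes for free, and since $\Phi(f)(\gamma) = 0$ for $\gamma$ outside the compact set $q(\supp f)$, we do land in $\Gamma_c(G, r^*\B)$; injectivity is clear. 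It is worth noting that $\Phi$ is \emph{not} surjective onto $\Gamma_c(G, r^*\B)$ --- the sections in its range are compactly supported along the fibers of $p$ as well --- but its range is dense in the inductive limit topology: if $\supp g = K$ and $\psi_i \in C_c(X)$ increase to $1$, then $f_i(\gamma, x) = \psi_i(x) g(\gamma)(x)$ defines elements of $\Gamma_c(G \ltimes X, r^*\A)$ with $\supp \Phi(f_i) \subseteq K$, and $\Phi(f_i) \to g$ uniformly because $\iota_r(g)$ vanishes at infinity on $G \ltimes X$.

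Next I would verify the algebraic compatibility. The key point is that the Haar system $\{\lambda^{p(x)} \times \delta_x\}$ for $G \ltimes X$ collapses the convolution integral to one over $G$, giving
\[
	(f * g)(\gamma, x) = \int_G f(\eta, x)\, \alpha_{(\eta, x)}\bigl(g(\eta^{-1}\gamma,\, \eta^{-1} \cdot x)\bigr)\, d\lambda^{r(\gamma)}(\eta);
\]
pulling the contractive evaluation $\B_{r(\gamma)} \to \A_x$ inside the integral, and using $\beta_\eta(b)(x) = \alpha_{(\eta,x)}(b(\eta^{-1} \cdot x))$, shows that $(\Phi(f) * \Phi(g))(\gamma)(x)$ is given by exactly the same expression, so $\Phi(f * g) = \Phi(f) * \Phi(g)$. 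The involution is handled in the same spirit, comparing $f^*(\gamma, x) = \alpha_{(\gamma,x)}\bigl(f(\gamma^{-1}, \gamma^{-1} \cdot x)^*\bigr)$ with $\Phi(f)^*(\gamma)(x) = \beta_\gamma\bigl(\Phi(f)(\gamma^{-1})^*\bigr)(x)$ and using that the involution on $\B_{s(\gamma)}$ is taken pointwise. A routine estimate gives $\|\Phi(f)\|_\infty = \|f\|_\infty$, and $\Phi$ sends a common compact support to a common compact support, so $\Phi$ is continuous for the inductive limit topologies.

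Finally, the passage to $\bar\Phi$. Pulling any representation of $\B \rtimes_\beta G$ back along $\Phi$ produces an inductive-limit-continuous $*$-representation of $\Gamma_c(G \ltimes X, r^*\A)$, so $\|\Phi(f)\|_{\B \rtimes_\beta G} \le \|f\|_{\A \rtimes_\alpha (G \ltimes X)}$ and $\bar\Phi$ exists as a $*$-homomorphism; it is surjective because its range contains the inductive-limit-dense subalgebra $\Phi(\Gamma_c(G \ltimes X, r^*\A))$ and inductive limit convergence forces norm convergence in the crossed product. The step I expect to be the genuine obstacle is injectivity of $\bar\Phi$, precisely because $\Phi$ fails to be onto $\Gamma_c(G, r^*\B)$, so one cannot simply dualize a representation of $\A \rtimes_\alpha (G \ltimes X)$. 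To get around this, I would take an inductive-limit-continuous $*$-representation $M$ of $\Gamma_c(G \ltimes X, r^*\A)$, form the $*$-representation $M \circ \Phi^{-1}$ of the dense subalgebra $\Phi(\Gamma_c(G \ltimes X, r^*\A)) \subseteq \Gamma_c(G, r^*\B)$, bound $\|M(\Phi^{-1}(g))\|$ by the $I$-norm of $\Phi^{-1}(g)$ and hence by the $I$-norm of $g$ (using that the fiber norm on $\B$ dominates pointwise values and that $u \mapsto \lambda^u(K)$ is bounded on compacta), and then extend by inductive-limit density to an honest representation $L$ of $\B \rtimes_\beta G$. Since $L \circ \Phi = M$, this yields $\|M(f)\| \le \|\Phi(f)\|_{\B \rtimes_\beta G}$ for every such $M$, hence $\|f\|_{\A \rtimes_\alpha (G \ltimes X)} \le \|\Phi(f)\|_{\B \rtimes_\beta G}$, so $\bar\Phi$ is isometric. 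One may repackage this last argument as the construction of an inductive-limit-continuous $*$-homomorphism $\Psi : \Gamma_c(G, r^*\B) \to \A \rtimes_\alpha (G \ltimes X)$ extending $\Phi^{-1}$, whose completion inverts $\bar\Phi$.
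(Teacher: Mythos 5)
Your proposal is correct and follows essentially the same route as the paper: verify that $\Phi$ is a $*$-homomorphism using the Haar system $\{\lambda^{p(x)} \times \delta_x\}$, show it is inductive-limit continuous with inductive-limit-dense (but not necessarily full) image, and obtain injectivity of $\bar{\Phi}$ from the $I$-norm estimate $\norm{\Phi^{-1}(g)}_I \leq \norm{g}_I$, which is exactly the paper's extension of the inverse $\Psi$ to the full crossed products. The only cosmetic differences are your direct cutoff argument for density (the paper instead invokes \cite[Proposition C.24]{TFB2} together with the vector-valued Tietze theorem) and your representation-theoretic phrasing of the final step, which, as you note yourself, repackages into the paper's construction of $\bar{\Psi}$.
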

\begin{proof}
	We will first check that $\Phi$ maps into $\Gamma_c(G, r^*\B)$. Let $f \in \Gamma_c(G \ltimes X, r^*\A)$. Since $\Phi$ agrees with 
	$\iota_r^{-1}$ on $\Gamma_c(G \ltimes X, r^*\A)$, it is immediate that $\Phi(f)$ is continuous. Now let $K$ denote the image of 
	$\supp(f)$ under the projection $G \ltimes X \to G$. If $\gamma \not \in K$, then $\Phi(f)(\gamma) = 0$, so $\supp(\Phi(f))$ is contained in the compact set $K$. Thus
	$\Phi(f)$ is compactly supported.
	
	Since $\Phi$ is clearly linear, we just need to check that it preserves the convolution and involution. If $f, g \in \Gamma_c(G \ltimes X, r^*\A)$, 
	then
	\begin{align*}
		\Phi(f * g)(\gamma)(x) &= (f*g)(\gamma, x) \\
			&= \int_{G \ltimes X} f(\eta, y) \alpha_{(\eta, y)} \bigl( g( (\eta, y)^{-1} (\gamma, x) ) \bigl) \, d(\lambda^{r(\gamma)} \times \delta_x)(\eta, y) \\
			&= \int_G f(\eta, x) \alpha_{(\eta, x)} \bigl( g( (\eta, x)^{-1} (\gamma, x) ) \bigr) \, d\lambda^{r(\gamma)}(\eta) \\
			&= \int_G f(\eta, x) \alpha_{(\eta, x)} \bigl( g(\eta^{-1} \gamma, \eta^{-1} \cdot x) \bigr) \, d\lambda^{r(\gamma)}(\eta) \\
			&= \int_G \Phi(f)(\eta)(x) \alpha_{(\eta, x)} \bigl( \Phi(g)(\eta^{-1} \gamma)(\eta^{-1} \cdot x) \bigr) d\lambda^{r(\gamma)}(\eta) \\
			&= \int_G \Phi(f)(\eta)(x) \beta_\eta \bigl( \Phi(g)(\eta^{-1} \gamma) \bigr)(x) \, d\lambda^{r(\gamma)}(\eta) \\
			&= \Biggl( \int_G \Phi(f)(\eta) \beta_\eta \bigl( \Phi(g)(\eta^{-1} \gamma) \bigr) \, d\lambda^{r(\gamma)}(\eta) \Biggr)(x)
	\end{align*}
	since evaluation at $x$ is a bounded linear map. However,
	\[
		\int_G \Phi(f)(\eta) \beta_\eta \bigl( \Phi(g)(\eta^{-1} \gamma) \bigr) \, d\lambda^{r(\gamma)}(\eta) = \Phi(f) * \Phi(g) (\gamma),
	\]
	and it follows that $\Phi$ is multiplicative. As for the involution, we have
	\begin{align*}
		\Phi(f^*)(\gamma)(x) &= f^*(\gamma, x) \\
			&= \alpha_{(\gamma, x)} \bigl( f((\gamma, x)^{-1})^* \bigr) \\
			&= \alpha_{(\gamma, x)} \bigl( f(\gamma^{-1}, \gamma^{-1} \cdot x)^* \bigr) \\
			&= \alpha_{(\gamma,x)} \bigl( \Phi(f)(\gamma^{-1})(\gamma^{-1} \cdot x) \bigr)^* \\
			&= \beta_\gamma \bigl( \Phi(f)(\gamma^{-1})^* \bigr)(x) \\
			&= \Phi(f)^*(\gamma)(x).
	\end{align*}
	Thus $\Phi$ is a $*$-homomorphism.
	
	To see that $\Phi$ extends to a homomorphism between the full crossed products, we show it is continuous with respect to the inductive limit topology. Suppose $f_i \to f$ in 
	$\Gamma_c(G \ltimes X, r^*\A)$ with respect to the inductive limit topology. Then $f_i \to f$ uniformly, and there is a fixed compact set $K_0 \subseteq G \ltimes X$ such that
	$\supp(f_i) \subseteq K_0$ eventually. Let $K \subseteq G$ denote the image of $K_0$ under projection onto the first coordinate. Then $K$ is compact. Moreover, $\supp(f_i)
	\subseteq K_0$ implies $\Phi(f_i)(\gamma) = f(\gamma, \cdot) = 0$ when $\gamma \not \in K$. Therefore, $\supp(\Phi(f_i))$ is eventually contained in $K$. Also,
	\begin{align*}
		\norm{\Phi(f_i) - \Phi(f)}_\infty &= \sup_{\gamma \in G} \norm{\Phi(f_i)(\gamma) - \Phi(f)(\gamma)}_\infty \\
			&= \sup_{\gamma \in G} \sup_{x \in p^{-1}(r(\gamma))} \norm{\Phi(f_i)(\gamma)(x) - \Phi(f)(\gamma)(x)} \\
			&= \sup_{\gamma \in G} \sup_{x \in p^{-1}(r(\gamma))} \norm{f_i(\gamma, x) - f(\gamma, x)} \\
			&= \sup_{(\gamma, x) \in G \ltimes X} \norm{f_i(\gamma, x) - f(\gamma, x)} \\
			&= \norm{f_i - f}_\infty,
	\end{align*}
	so $\Phi(f_i) \to \Phi(f)$ uniformly. Thus $\Phi(f_i) \to \Phi(f)$ in the inductive limit topology, so $\Phi$ is continuous. It follows that $\Phi$ extends to a homomorphism
	$\bar{\Phi} : \A \rtimes_\alpha (G \ltimes X) \to \B \rtimes_\beta G$. It remains to see that $\bar{\Phi}$ is an isomorphism.
	
	We begin by showing that $\Phi$ is injective on $\Gamma_c(G \ltimes X, r^*\A)$. This is fairly routine: if $\Phi(f) = 0$ for some $f \in \Gamma_c(G \ltimes X, r^*\A)$, then
	$\Phi(f)(\gamma) = 0$ for all $\gamma \in G$, which in turn means that $\Phi(f)(\gamma)(x) = 0$ for all $\gamma \in G$ and all $x \in p^{-1}(r(\gamma))$. But then
	\[
		f(\gamma, x) = \Phi(f)(\gamma)(x) = 0
	\]
	for all $(\gamma, x) \in G \ltimes X$. Thus $f=0$, and $\Phi$ is injective on $\Gamma_c(G \ltimes X, r^*\A)$. Thus $\bar{\Phi}$ restricts to an isomorphism of 
	$\Gamma_c(G \ltimes X, r^*\A)$ onto $\image{\Phi} \subseteq \Gamma_c(G, r^*\B)$.
	
	Next we claim that $\image \Phi$ is dense in $\Gamma_c(G, r^*\B)$ with respect to the inductive limit topology, which will show that $\bar{\Phi}$ is surjective. We will use 
	\cite[Proposition C.24]{TFB2}. Notice first that $\image{\Phi}$ is a $C_0(G)$-module: if $g = \Phi(f)$ for some $f \in \Gamma_c(G \ltimes X, r^*\A)$ and $\varphi \in C_0(G)$,
	then
	\begin{align*}
		\Phi(\varphi \cdot f)(\gamma)(x) &= (\varphi \cdot f)(\gamma, x) \\
			&= \varphi(\gamma) f(\gamma, x) \\
			&= \varphi(\gamma) \Phi(f)(\gamma)(x) \\
			&= (\varphi \cdot \Phi(f))(\gamma)(x) \\
			&= (\varphi \cdot g)(\gamma)(x),
	\end{align*}
	so $\varphi \cdot g \in \image \Phi$. Now fix $\gamma \in G$. We need to show that
	\[
		\{ g(\gamma) : g \in \image \Phi \}
	\]
	is dense in $r^*\B_\gamma$. Let $b \in \Gamma_c(p^{-1}(r(\gamma)), \A) \subseteq r^*\B_\gamma$, and use the vector-valued Tietze extension theorem 
	\cite[Proposition A.5]{muhly-williams} to find $f \in \Gamma_c(G \ltimes X, \A)$ such that $f(\gamma, x) = b(x)$ for all $x \in p^{-1}(r(\gamma))$. Then
	\[
		\Phi(f)(\gamma)(x) = f(\gamma, x) = b(x)
	\]
	for all $x \in p^{-1}(r(\gamma))$, so $\Phi(f)(\gamma) = b$. It follows that $\{g(\gamma) : g \in \image \Phi\}$ is dense in $(r^*\B)_\gamma$, so \cite[Proposition C.24]{TFB2} implies
	that $\image \Phi$ is dense in $\Gamma_0(G, r^*\B)$. Now let $g \in \Gamma_c(G, r^*\B)$. Then there is a net $f_i \in \Gamma_c(G \ltimes X, r^*\A)$ such that $\Phi(f_i) \to g$ 
	uniformly. Let $K = \supp(g)$, and choose $\varphi \in C_c(G)^+$ such that $\varphi \vert_K \equiv 1$ and $\varphi(x) < 1$ for all $x \not\in K$. Put $g_i = \varphi \cdot \Phi(f_i) 
	= \Phi(\varphi \cdot f_i)$. Notice that $\varphi \cdot g = g$, so $g_i \to g$ uniformly. Moreover, each $g_i$ is compactly supported, and $\supp(g_i) \subseteq \supp(\varphi)$ for all 
	$i$. Thus $g_i \to g$ in the inductive limit topology. It follows that $\bar{\Phi}$ is surjective.
	
	Recall that $\bar{\Phi}$ restricts to an isomorphism of $\Gamma_c(G \ltimes X, r^*\A)$ onto the dense subalgebra $\image \Phi$, so we have an inverse 
	$\Psi : \image \Phi \to \Gamma_c(G \ltimes X, r^*\A)$ given by
	\[
		\Psi(g)(\gamma, x) = g(\gamma)(x).
	\]
	We claim that $\Psi$ extends to a homomorphism at the level of crossed products---to show it, we will prove that $\Psi$ is $I$-norm decreasing. Let $g \in \image \Phi$, 
	and observe that
	\begin{align*}
		\int_{G \ltimes X} \norm{\Psi(g)(\gamma, y)} \, d(\lambda^{r(x)} \times \delta_x)(\gamma, y) &= \int_G \norm{\Psi(g)(\gamma, x)} \, d\lambda^{r(x)}(\gamma) \\
			&= \int_G \norm{g(\gamma)(x)} \, d\lambda^{r(x)}(\gamma) \\
			&\leq \int_G \norm{g(\gamma)}_\infty \, d\lambda^{r(x)}(\gamma) \\
			&\leq \norm{g}_I.
	\end{align*}
	Similarly,
	\begin{align*}
		\int_{G \ltimes X} \norm{\Psi(g)(\gamma^{-1}, \gamma^{-1} \cdot y)} \, d(\lambda^{r(x)} \times \delta_x)(\gamma, y) 
			\leq \int_G \norm{g(\gamma^{-1})}_\infty \, d\lambda^{r(x)}(\gamma) \leq \norm{g}_I,
	\end{align*}
	and taking suprema leads to the conclusion that $\norm{\Psi(g)}_I \leq \norm{g}_I$. Since $\Psi$ is $I$-norm decreasing, it extends to a homomorphism 
	$\bar{\Psi} : \B \rtimes_\beta G \to \A \rtimes_\alpha (G \ltimes X)$. Moreover, $\bar{\Psi}$ and $\bar{\Phi}$ are inverses on dense subalgebras, so they must be 
	inverses everywhere. That is, $\bar{\Phi}$ is an isomorphism.
\end{proof}

In order to prove results about exactness, we require an analogue of Theorem \ref{thm:fullcross} for the \emph{reduced} crossed product. Therefore, we need to show 
the homomorphism $\Phi : \Gamma_c(G \ltimes X, r^*\A) \to \Gamma_c(G, r^*\B)$ is isometric with respect to the reduced norms. Let $\pi : B \to B(\Hil)$ be a faithful 
representation, and let $\Y$ denote the right Hilbert $B$-module obtained by completing $\Gamma_c(G, s^*\B)$ with respect to the $B$-valued inner product defined
by
\[
	\hip{w}{z}_B(u) = \int_G w(\gamma)^* z(\gamma) \, d\lambda_u(\gamma)
\]
for all $u \in \go$. Then the induced representation $\Ind \pi$ acts on the Hilbert space $\Y \otimes_B \Hil$, which is equipped with the inner product
\[
	\ip{z \otimes h}{w \otimes k} = \ip{\pi \bigl( \hip{w}{z}_B \bigr) h}{k},
\]
Moreover,
\[
	\norm{f}_r = \norm{\Ind \pi(f)}
\]
for all $f \in \Gamma_c(G, r^*\B)$. Likewise, $\pi \circ \Theta : A \to B(\Hil)$ is a faithful representation, where $\Theta : A \to B$ is the isomorphism afforded by Proposition 
\ref{prop:c0fiber}. If we let $\Zme$ denote the right Hilbert $A$-module completion of $\Gamma_c(G \ltimes X, s^*\A)$ with respect to the inner product defined by
\[
	\hip{w}{z}_A(x) = \int_G w(\gamma, \gamma \cdot x)^* z(\gamma, \gamma \cdot x) \, d\lambda_{p(x)}(\gamma)
\]
for all $x \in X$, then the resulting induced representation $\Ind(\pi \circ \Theta)$ acts on $\Zme \otimes_A \Hil$, which carries the inner product
\[
	\ip{z \otimes h}{w \otimes k} = \ip{\pi \circ \Theta \bigl( \hip{w}{z}_A \bigr) h}{k}.
\]
Again,
\[
	\norm{f}_r = \norm{\Ind(\pi \circ \Theta)(f)}
\]
for all $f \in \Gamma_c(G \ltimes X, r^*\A)$. Therefore, to show that $\Phi$ is isometric, it is enough to show that $(\Ind \pi) \circ \Phi$ and $\Ind(\pi \circ \Theta)$ are unitarily
equivalent. Notice that the map $\lt^{-1}$ clearly takes compactly supported sections to compactly supported sections, and an argument like the one from the proof of the previous 
theorem shows the same is true for $\iota_s^{-1}$. Thus we have a natural map from $\Gamma_c(G \ltimes X, s^*\A)$ to $\Gamma_c(G, s^*\B)$, namely $(\lt \circ \iota_s)^{-1}$. 
We then define a map $U : \Gamma_c(G \ltimes X, s^*\A) \odot \Hil \to \Gamma_c(G, s^*\B) \odot \Hil$ on elementary tensors by
\[
	U(z \otimes h) = (\lt \circ \iota_s)^{-1}(z) \otimes h,
\]
which we claim extends to the completion $\Zme \otimes_A \Hil$.

\begin{prop}
	The map $U$ extends to a unitary $U : \Zme \otimes_A \Hil \to \Y \otimes_B \Hil$ that intertwines the representations
	$(\Ind \pi) \circ \Phi$ and $\Ind(\pi \circ \Theta)$.
\end{prop}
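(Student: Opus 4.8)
The plan is to establish three things in order: that $U$ is isometric on the algebraic tensor product (hence extends to an isometry $\Zme \otimes_A \Hil \to \Y \otimes_B \Hil$), that its range is dense (hence $U$ is a unitary), and that it intertwines the two induced representations.

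For the isometry, I would reduce everything to a single identity in the $C^*$-algebra $B$. Unwinding the definitions of the two inner products and using that $\pi \circ \Theta$ is $\pi$ composed with the section-algebra isomorphism $\Theta$ of Proposition \ref{prop:c0fiber}, the claim $\ip{U(z \otimes h)}{U(w \otimes k)} = \ip{z \otimes h}{w \otimes k}$ for all elementary tensors is equivalent to
\[
	\hip{(\lt \circ \iota_s)^{-1}(w)}{(\lt \circ \iota_s)^{-1}(z)}_B = \Theta\bigl( \hip{w}{z}_A \bigr)
\]
for all $z, w \in \Gamma_c(G \ltimes X, s^*\A)$. Both sides lie in $B = \Gamma_0(\go, \B)$, and since $\B_u = \Gamma_0(p^{-1}(u), \A)$, I would verify this fiberwise, evaluating at $u \in \go$ and then at $x \in p^{-1}(u)$. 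The key computational input is the formula $(\lt \circ \iota_s)^{-1}(z)(\gamma)(x) = z(\gamma, \gamma \cdot x)$, which is read off directly from the definitions of $\iota_s$ and $\lt$ in Lemmas \ref{lem:lemmaone} and \ref{lem:lt}; substituting it into the definition of $\hip{\cdot}{\cdot}_B$ turns the left-hand side evaluated at $(u,x)$ into $\int_G w(\gamma, \gamma \cdot x)^* z(\gamma, \gamma \cdot x) \, d\lambda_{p(x)}(\gamma)$, which is exactly $\hip{w}{z}_A(x) = \Theta(\hip{w}{z}_A)(u)(x)$. As usual, preservation of the pre-inner product on the algebraic tensor product is enough to conclude that $U$ extends to an isometry of the Hilbert space completions.

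For surjectivity I would observe that $(\lt \circ \iota_s)^{-1} = \iota_s^{-1} \circ \lt^{-1}$ carries $\Gamma_c(G \ltimes X, s^*\A)$ \emph{onto} $\Gamma_c(G, s^*\B)$, since $\lt^{-1}$ and $\iota_s^{-1}$ each take compactly supported sections to compactly supported sections (Lemma \ref{lem:lt} and the remarks preceding this proposition). Hence the range of $U$ contains $\Gamma_c(G, s^*\B) \odot \Hil$, which is dense in $\Y \otimes_B \Hil$; a surjective isometry is a unitary.

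Finally, for the intertwining property it suffices to check $U\bigl( \Ind(\pi \circ \Theta)(f)(z \otimes h) \bigr) = \Ind \pi(\Phi(f))\bigl( U(z \otimes h) \bigr)$ on elementary tensors with $f \in \Gamma_c(G \ltimes X, r^*\A)$, and by the definition of the induced actions this amounts to the module identity
\[
	(\lt \circ \iota_s)^{-1}(f \cdot z) = \Phi(f) \cdot (\lt \circ \iota_s)^{-1}(z).
\]
I would verify this fiberwise as well: evaluating both sides at $\gamma \in G$ and $x \in p^{-1}(s(\gamma))$ and using the formula for $(\lt \circ \iota_s)^{-1}$ above, the left-hand side becomes $(f \cdot z)(\gamma, \gamma \cdot x)$. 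I would then expand this with the formula for the action of $\A \rtimes_\alpha (G \ltimes X)$ on $\Zme$, noting that the Haar system $\{\lambda^{p(x)} \times \delta_x\}$ of $G \ltimes X$ collapses the integral over $G \ltimes X$ to one over $G$ and that $(\eta, \gamma \cdot x)^{-1}(\gamma, \gamma \cdot x) = (\eta^{-1}\gamma, (\eta^{-1}\gamma) \cdot x)$ in the transformation groupoid. Expanding the right-hand side with the formula for the action of $\B \rtimes_\beta G$ on $\Y$, the identity $\beta_\gamma^{-1}(c)(y) = \alpha_{(\gamma, \gamma \cdot y)}^{-1}(c(\gamma \cdot y))$ coming from Proposition \ref{prop:action} absorbs $\beta$ into $\alpha$, and the two expressions match term by term. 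This last computation is essentially the module-level counterpart of the proof in Theorem \ref{thm:fullcross} that $\Phi$ is multiplicative, and it is the part of the argument requiring the most care; everything else is bookkeeping with the maps $\iota_s$, $\lt$, and $\Theta$ already in hand.
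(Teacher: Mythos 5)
Your overall strategy is the one the paper follows, and two of your three steps are carried out correctly: the isometry computation (reducing everything to the single identity $\Theta\bigl(\hip{w}{z}_A\bigr) = \hip{(\lt \circ \iota_s)^{-1}(w)}{(\lt \circ \iota_s)^{-1}(z)}_B$, verified fiberwise via the formula $(\lt \circ \iota_s)^{-1}(z)(\gamma)(x) = z(\gamma, \gamma \cdot x)$) and the intertwining computation (reducing to the module identity $\Phi(f) \cdot (\lt \circ \iota_s)^{-1}(z) = (\lt \circ \iota_s)^{-1}(f \cdot z)$ and absorbing $\beta_\gamma^{-1}$ into $\alpha_{(\gamma, \gamma\cdot x)}^{-1}$) match the paper's proof essentially line for line.

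The gap is in your density step. You claim that $(\lt \circ \iota_s)^{-1}$ carries $\Gamma_c(G \ltimes X, s^*\A)$ \emph{onto} $\Gamma_c(G, s^*\B)$, and you justify this by the fact that $\lt^{-1}$ and $\iota_s^{-1}$ preserve compact supports. That fact only shows the image is \emph{contained in} $\Gamma_c(G, s^*\B)$; surjectivity would require the inverse $\lt \circ \iota_s$ to carry $\Gamma_c(G, s^*\B)$ into $\Gamma_c(G * X, s^*\A)$, and it does not. A section $g \in \Gamma_c(G, s^*\B)$ with compact support $K \subseteq G$ takes values $g(\gamma) \in \B_{s(\gamma)} = \Gamma_0(p^{-1}(s(\gamma)), \A)$ that merely vanish at infinity on the fibers of $p$, so $\iota_s(g)$, i.e.\ $(\gamma, x) \mapsto g(\gamma)(x)$, is supported in the preimage of $K$ under the first-coordinate projection $q$, which is not compact unless the fibers $p^{-1}(u)$ are compact (properness of $p$ is not a hypothesis here; it only enters in Section \ref{sec:converse}). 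Concretely, with $G$ a group, $X = \R$, and $\A$ trivial, a section supported at a single $\gamma$ whose value is a non-compactly-supported element of $C_0(\R)$ is not in the image. The conclusion you need is still true: the image of $(\lt \circ \iota_s)^{-1}$ is a $C_0(G)$-submodule of $\Gamma_c(G, s^*\B)$ that is fiberwise dense, hence dense in the inductive limit topology, and one then passes from inductive-limit density to norm density in the Hilbert module $\Y$ (the paper invokes the argument of \cite[Lemma 5.5]{lalonde2014} for this last step). Since a densely-ranged isometry of pre-Hilbert spaces still extends to a unitary, your proof survives once this step is replaced by the density argument, but as written the ``onto'' claim is false and the density it is meant to deliver is not free.
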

\begin{proof}
	To see that $U$ extends to a unitary, we just need to check that $U$ is isometric and has dense range. Let $z \otimes h, w \otimes k \in \Gamma_c(G \ltimes X, s^*\A) \odot \Hil$
	and observe that 
	\begin{align*}
		\ip{U(z \otimes h)}{U(w \otimes k)} &= \ip{(\lt \circ \iota_s)^{-1}(z) \otimes h}{(\lt \circ \iota_s)^{-1}(w) \otimes k} \\
			&= \ip{\pi \bigl( \hip{(\lt \circ \iota_s)^{-1}(w)}{(\lt \circ \iota_s)^{-1}(z)}_B \bigr) h}{k},
	\end{align*}
	where
	\begin{align*}
		& \hip{(\lt \circ \iota_s)^{-1}(w)}{(\lt \circ \iota_s)^{-1}(z)}_B(u)(x) \\
			&\qquad = \biggl( \int_G (\lt \circ \iota_s)^{-1}(w)(\gamma)^* (\lt \circ \iota_s)^{-1}(z)(\gamma) \, d\lambda_u(\gamma) \biggr)(x) \\
			&\qquad = \int_G (\lt \circ \iota_s)^{-1}(w)(\gamma)(x)^* (\lt \circ \iota_s)^{-1}(z)(\gamma)(x) \, d\lambda_u(\gamma) \\
			&\qquad = \int_G \lt^{-1}(w)(\gamma, x)^* \lt^{-1}(z)(\gamma, x) \, d\lambda_u(\gamma) \\
			&\qquad = \int_G w(\gamma, \gamma \cdot x)^* z(\gamma, \gamma \cdot x) \, d\lambda_{p(x)}(\gamma) \\
			&\qquad = \int_{G \ltimes X} w(\gamma, y)^* z(\gamma, y) \, d\mu_x(\gamma, y) \\
			&\qquad = \hip{w}{z}_A(x)
	\end{align*}
	for all $u \in \go$ and all $x \in p^{-1}(u)$. Notice that 
	\[
		\Theta( \hip{w}{z}_A )(u, x) = \hip{w}{z}_A(x),
	\]
	so it follows that
	\[
		\Theta( \hip{w}{z}_A ) = \hip{(\lt \circ \iota_s)^{-1}(w)}{(\lt \circ \iota_s)^{-1}(z)}_B.
	\]
	Thus
	\begin{align*}
		\ip{U(z \otimes h)}{U(w \otimes k)} &= \ip{\pi \circ \Theta \bigl( \hip{w}{z}_A \bigr) h}{k} \\
			&= \ip{z \otimes h}{w \otimes k}
	\end{align*}
	in $\Gamma_c(G \ltimes X, s^*\A) \odot \Hil$, so $U$ is isometric. To see that it has dense range, we first claim that $(\lt \circ \iota_s)^{-1}$ 
	is continuous with respect to the inductive limit topologies on $\Gamma_c(G \ltimes X, s^*\A)$ and $\Gamma_c(G, s^*\B)$. This is fairly straightforward---an 
	argument like the one from the proof of Theorem \ref{thm:fullcross} shows that $\iota_s^{-1}$ is continuous with respect to the inductive limit topology, and we 
	already established that $\lt^{-1}$ is continuous in the inductive limit topology in the proof of Lemma \ref{lem:lt}. Thus the range of $(\lt \circ \iota_s)^{-1}$ 
	is dense in $\Gamma_c(G, s^*\B)$ with respect to the inductive limit topology. An argument identical to the one from \cite[Lemma 5.5]{lalonde2014} then shows that
	the range of $(\lt \circ \iota_s)^{-1}$ is norm-dense in the Hilbert $B$-module $\Y$. It follows that the range of $U$ is dense in 
	$\Y \otimes_B \Hil$: if $z_i \to z$ in $\Y$, then for any $h \in \Hil$ we have
	\begin{align*}
		\norm{(z_i - z) \otimes h}^2 &= \ip{(z_i - z) \otimes h}{(z_i - z) \otimes h} \\
			&= \ip{\pi \bigl( \hip{z_i - z}{z_i - z}_B \bigr)h}{h} \\
			&\leq \norm{\hip{z_i - z}{z_i - z}_B} \norm{h}^2 \\
			&= \norm{z_i - z}^2 \norm{h}^2 \\
			&\to 0,
	\end{align*}
	so $z_i \otimes h \to z \otimes h$. Therefore, $U$ extends to a unitary.
	
	We are left with the verification that $U$ intertwines $(\Ind \pi) \circ \Phi$ and $\Ind(\pi \circ \Theta)$. Well, observe that if $f \in 
	\Gamma_c(G \ltimes X, r^*\A)$ and $z \otimes h \in \Gamma_c(G \ltimes X, s^*\A) \odot \Hil$, then
	\begin{align*}
		\Ind \pi (\Phi(f)) \cdot U(z \otimes h) &= \Ind \pi(\Phi(f)) \bigl( (\lt \circ \iota_s)^{-1}(z) \otimes h \bigr) \\
			&= \Phi(f) \cdot (\lt \circ \iota_s)^{-1}(z) \otimes h,
	\end{align*}
	where
	\begin{align*}
		\Phi(f) \cdot (\lt \circ \iota_s)^{-1}&(z)(\gamma)(x) = \int_G \beta_\gamma^{-1} \bigl( \Phi(f)(\eta) \bigr)(x) (\lt \circ \iota_s)^{-1}(z)
				(\eta^{-1} \gamma)(x) \, d\lambda^{r(\gamma)}(\eta) \\
			&= \int_G \alpha_{(\gamma^{-1}, x)} \bigl( \Phi(f)(\eta)(\gamma \cdot x) \bigr) \lt^{-1}(z)(\eta^{-1}\gamma, x) \, 
				d\lambda^{r(\gamma)}(\eta) \\
			&= \int_G \alpha_{(\gamma, \gamma \cdot x)}^{-1} \bigl( f(\eta, \gamma \cdot x) \bigr) z(\eta^{-1} \gamma, \eta^{-1} \gamma \cdot x) 
				\, d\lambda^{r(\gamma)}(\eta) \\
			&= \int_G \alpha_{(\gamma, \gamma \cdot x)}^{-1} \bigl( f(\eta, \gamma \cdot x) \bigr) z \bigl( (\eta^{-1}, \eta^{-1}\gamma \cdot x)
				(\gamma, \gamma \cdot x) \bigr) \, d\lambda^{r(\gamma)}(\eta) \\
			&= \int_G \alpha_{(\gamma, \gamma \cdot x)}^{-1} \bigl( f(\eta, \gamma \cdot x) \bigr) z \bigl( (\eta, \gamma \cdot x)^{-1} (\gamma, 
				\gamma \cdot x) \bigr) \, d\lambda^{r(\gamma)}(\eta) \\
			&= f \cdot z(\gamma, \gamma \cdot x) \\
			&= \lt^{-1}(f \cdot z)(\gamma, x) \\
			&= \iota_s^{-1} \bigl( \lt^{-1}(f \cdot z) \bigr)(\gamma)(x).
	\end{align*}
	That is,
	\[
		\Phi(f) \cdot (\lt \circ \iota_s)^{-1}(z) = (\lt \circ \iota_s)^{-1}(f \cdot z),
	\]
	so
	\begin{align*}
		\Ind \pi (\Phi(f)) \cdot U(z \otimes h) &= \Phi(f) \cdot (\lt \circ \iota_s)^{-1}(z) \otimes h \\
			&= (\lt \circ \iota_s)^{-1}(f \cdot z) \otimes h \\
			&= U(f \cdot z \otimes h) \\
			&= U \cdot \Ind(\pi \circ \Theta)(f)(z \otimes h).
	\end{align*}
	Thus $U$ indeed intertwines the given representations.
\end{proof}

\begin{thm}
\label{thm:redcross}
	The map $\Phi : \Gamma_c(G \ltimes X, r^*\A) \to \Gamma_c(G, r^*\B)$ from Theorem \ref{thm:fullcross} extends to an isomorphism
	$\Phi : \A \rtimes_{\alpha, r} (G \ltimes X) \to \B \rtimes_{\beta, r} G$.
\end{thm}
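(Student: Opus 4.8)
The plan is to deduce everything quickly from the intertwining unitary $U$ of the preceding Proposition, together with the density facts already established inside the proof of Theorem~\ref{thm:fullcross}.

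First I would pin down the faithful representations so that the two reduced norms in question are literally the ones witnessed by $U$. Since $\Theta : A \to B$ is the isomorphism from Proposition~\ref{prop:c0fiber} and $\pi : B \to B(\Hil)$ is faithful, $\pi \circ \Theta : A \to B(\Hil)$ is a faithful representation of $A$; because induction respects weak containment (as recalled in Section~\ref{sec:background}), we may use it to compute the reduced norm, so that $\norm{f}_r = \norm{\Ind(\pi \circ \Theta)(f)}$ for $f \in \Gamma_c(G \ltimes X, r^*\A)$ while $\norm{g}_r = \norm{\Ind \pi(g)}$ for $g \in \Gamma_c(G, r^*\B)$. The preceding Proposition supplies a unitary $U$ with $\Ind \pi(\Phi(f))\,U = U\,\Ind(\pi \circ \Theta)(f)$, whence
\[
  \norm{\Phi(f)}_r = \norm{\Ind \pi(\Phi(f))} = \norm{U^{*}\,\Ind \pi(\Phi(f))\,U} = \norm{\Ind(\pi \circ \Theta)(f)} = \norm{f}_r
\]
for every $f \in \Gamma_c(G \ltimes X, r^*\A)$. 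Thus $\Phi$ is isometric for the reduced norms, so it extends to an isometric $*$-homomorphism $\Phi : \A \rtimes_{\alpha, r}(G \ltimes X) \to \B \rtimes_{\beta, r} G$.

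It then remains only to see that this extension is surjective, and here I would reuse verbatim the argument from Theorem~\ref{thm:fullcross}: $\image \Phi$ is a $C_0(G)$-submodule of $\Gamma_c(G, r^*\B)$ that is fiberwise dense (via the vector-valued Tietze extension theorem), so by \cite[Proposition C.24]{TFB2} together with the bump-function cutdown used there it is dense in $\Gamma_c(G, r^*\B)$ in the inductive limit topology. Since convergence in the inductive limit topology forces convergence in $\norm{\cdot}_r$ (the reduced norm is dominated by the $I$-norm, which is continuous in the inductive limit topology), $\image \Phi$ is dense in $\B \rtimes_{\beta, r} G$. But $\Phi$ is isometric, so its image is complete, hence closed, hence all of $\B \rtimes_{\beta, r} G$; this gives the asserted isomorphism.

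I do not expect any genuine obstacle here: the substantive analytic work — constructing $\beta$, proving the full-crossed-product isomorphism, and above all building the intertwining unitary $U$ — has already been carried out, and what is left is the bookkeeping of assembling these pieces. The only two points worth stating with care are that the reduced norm on $\A \rtimes_\alpha (G \ltimes X)$ may be computed using the particular faithful representation $\pi \circ \Theta$ (so that $U$ directly witnesses the isometry), and that density in the inductive limit topology upgrades to density in the reduced norm.
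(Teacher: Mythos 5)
Your proposal is correct and follows essentially the same route as the paper: the paper's proof consists precisely of the isometry computation $\norm{\Phi(f)}_r = \norm{\Ind \pi(\Phi(f))} = \norm{\Ind(\pi \circ \Theta)(f)} = \norm{f}_r$ via the intertwining unitary from the preceding proposition, with surjectivity left implicit via the density of $\image\Phi$ already established in Theorem \ref{thm:fullcross}. Your additional care in spelling out why $\pi\circ\Theta$ computes the reduced norm and why inductive-limit density upgrades to reduced-norm density is sound but not a different argument.
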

\begin{proof}
	We just need to check that $\Phi$ is isometric with respect to the reduced norms. This follows easily from the last proposition: if 
	$\pi$ is a faithful representation of $B$ and $f \in \Gamma_c(G \ltimes X, r^*\A)$, then
	\[
		\norm{\Phi(f)}_r = \norm{\Ind \pi(\Phi(f))} = \norm{\Ind(\pi \circ \Theta)(f)} = \norm{f}_r. \qedhere
	\]
\end{proof}

Before we can prove the main theorem of this section, we need one final result about invariant ideals.

\begin{prop}
	Let $\Theta : A \to B$ denote the isomorphism afforded by Proposition \ref{prop:c0fiber}. Let $I \subseteq A$ be an ideal, and let $J = \Theta(I)$. Then $I$ is 
	$(G \ltimes X)$-invariant if and only if $J$ is $G$-invariant.
\end{prop}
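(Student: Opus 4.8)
The plan is to reduce both invariance conditions to fiberwise statements about the bundles and then chase them through the formula $\beta_\gamma(b)(x) = \alpha_{(\gamma,x)}(b(\gamma^{-1}\cdot x))$ of Proposition \ref{prop:action}. First I would record the bundle picture for the ideals. Since $A = \Gamma_0(X,\A)$ is a $C_0(X)$-algebra, the ideal $I$ is again a $C_0(X)$-algebra for the restricted action, with an associated upper semicontinuous $C^*$-bundle $\I \to X$ whose fibers $\I_x$ are ideals of $\A_x$ and satisfy $I = \{a \in A : a(x) \in \I_x \text{ for all } x\}$ (see \cite[\S3.3]{dana-marius}); in these terms, $(G \ltimes X)$-invariance of $I$ says precisely that $\alpha_{(\gamma,x)}(\I_{\gamma^{-1}\cdot x}) = \I_x$ for every $(\gamma,x) \in G \ltimes X$. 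Likewise $J = \Theta(I)$ is an ideal of the $C_0(\go)$-algebra $B = \Gamma_0(\go,\B)$, with bundle $\mathcal J \to \go$, fibers $\mathcal J_u \subseteq \B_u$, and $G$-invariance of $J$ means $\beta_\gamma(\mathcal J_{s(\gamma)}) = \mathcal J_{r(\gamma)}$ for all $\gamma \in G$. The key identification I need is
\[
	\mathcal J_u = \Gamma_0(p^{-1}(u),\I) \subseteq \Gamma_0(p^{-1}(u),\A) = \B_u .
\]
This I would obtain by applying Proposition \ref{prop:c0fiber} to $I = \Gamma_0(X,\I)$ with $\sigma = p$: the $C_0(\go)$-structure it produces on $I$ is the one $I$ inherits as an ideal of $A$ (both are implemented by $\varphi \cdot a = (\varphi \circ p) \cdot a$), the associated bundle over $\go$ has fibers $\Gamma_0(p^{-1}(u),\I)$, and the isomorphism onto its section algebra is $a \mapsto (u \mapsto a\vert_{p^{-1}(u)})$, which is exactly the restriction of $\Theta$; tracking $\Theta$ then yields the displayed equality inside $\B_u$. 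I will also use that evaluation $\Gamma_0(p^{-1}(u),\I) \to \I_y$ at any $y \in p^{-1}(u)$ is surjective, being (by construction) a quotient map onto a fiber.

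For the forward implication, assume $I$ is $(G \ltimes X)$-invariant, fix $\gamma \in G$, and take $b \in \mathcal J_{s(\gamma)} = \Gamma_0(p^{-1}(s(\gamma)),\I)$. For $x \in p^{-1}(r(\gamma))$ one has $\gamma^{-1}\cdot x \in p^{-1}(s(\gamma))$, so $b(\gamma^{-1}\cdot x) \in \I_{\gamma^{-1}\cdot x}$, whence
\[
	\beta_\gamma(b)(x) = \alpha_{(\gamma,x)}\bigl(b(\gamma^{-1}\cdot x)\bigr) \in \alpha_{(\gamma,x)}(\I_{\gamma^{-1}\cdot x}) = \I_x .
\]
Thus $\beta_\gamma(b) \in \Gamma_0(p^{-1}(r(\gamma)),\I) = \mathcal J_{r(\gamma)}$, giving $\beta_\gamma(\mathcal J_{s(\gamma)}) \subseteq \mathcal J_{r(\gamma)}$; applying this to $\gamma^{-1}$ and using that $\beta_{\gamma^{-1}}$ is the inverse of $\beta_\gamma$ (from $\beta_\gamma \beta_{\gamma^{-1}} = \beta_{r(\gamma)} = \id$) gives the reverse containment, so $J$ is $G$-invariant.

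For the converse, assume $J$ is $G$-invariant and fix $(\gamma,x) \in G \ltimes X$, so $p(x) = r(\gamma)$ and $\gamma^{-1}\cdot x \in p^{-1}(s(\gamma))$. Given $a \in \I_{\gamma^{-1}\cdot x}$, I would use surjectivity of evaluation to choose $b \in \mathcal J_{s(\gamma)} = \Gamma_0(p^{-1}(s(\gamma)),\I)$ with $b(\gamma^{-1}\cdot x) = a$; then $\beta_\gamma(b) \in \mathcal J_{r(\gamma)} = \Gamma_0(p^{-1}(r(\gamma)),\I)$ by hypothesis, so
\[
	\alpha_{(\gamma,x)}(a) = \alpha_{(\gamma,x)}\bigl(b(\gamma^{-1}\cdot x)\bigr) = \beta_\gamma(b)(x) \in \I_x .
\]
This gives $\alpha_{(\gamma,x)}(\I_{\gamma^{-1}\cdot x}) \subseteq \I_x$, and applying it to $(\gamma,x)^{-1} = (\gamma^{-1},\gamma^{-1}\cdot x) \in G \ltimes X$, together with $\alpha_{(\gamma,x)^{-1}} = \alpha_{(\gamma,x)}^{-1}$, yields the opposite containment; hence $I$ is $(G \ltimes X)$-invariant.

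The step I expect to be the main obstacle is the bundle identification $\mathcal J_u = \Gamma_0(p^{-1}(u),\I)$ inside $\B_u$: it requires matching the $C_0(\go)$-structure that $I$ inherits as an ideal of $A$ with the one furnished by Proposition \ref{prop:c0fiber}, and appealing to the (standard but slightly delicate) correspondence between ideals of a $C_0(X)$-algebra and sub-bundles, so that $\I_x$ genuinely sits in $\A_x$ and $\mathcal J_u$ in $\B_u$. Once the fibers are pinned down, the rest is a short chase, the only trick being the passage to inverse groupoid elements to upgrade the containments to equalities.
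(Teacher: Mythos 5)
Your proposal is correct and follows essentially the same route as the paper: identify the fibers $\mathcal{J}_u$ with $\Gamma_0(p^{-1}(u),\I)$ inside $\B_u$, push elements through the formula $\beta_\gamma(b)(x) = \alpha_{(\gamma,x)}(b(\gamma^{-1}\cdot x))$ in each direction, and upgrade the resulting containments to equalities by passing to inverse (groupoid) elements. If anything, you are more explicit than the paper about the fiber identification and the surjectivity of evaluation maps, both of which the paper treats as observations.
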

\begin{proof}
	Let $\I$ and $\mathcal{J}$ denote the upper semicontinuous $C^*$-bundles associated to $I$ and $J$. Suppose first that $I$ is $(G \ltimes X)$-invariant, i.e., for all 
	$(\gamma, x) \in G \ltimes X$,
	\[
		\alpha_{(\gamma, x)} \bigl( \I_{\gamma^{-1} \cdot x} \bigr) = \I_x.
	\]
	Observe that for each $\gamma \in G$, $\mathcal{J}_{s(\gamma)} = C_0(p^{-1}(s(\gamma)), \I)$. Thus if $b \in \mathcal{J}_{s(\gamma)}$ we have
	\[
		\beta_\gamma(b)(x) = \alpha_{(\gamma, x)} \bigl( b(\gamma^{-1} \cdot x) \bigr) \in \I_x,
	\]
	so $\beta_\gamma(b) \in C_0(p^{-1}(r(\gamma)), \I) = \mathcal{J}_{r(\gamma)}$. It is not hard to see that $\beta_\gamma(\mathcal{J}_{s(\gamma)}) = \mathcal{J}_{r(\gamma)}$,
	since the same argument shows that $\beta_\gamma^{-1}(\mathcal{J}_{r(\gamma)}) \subseteq \mathcal{J}_{s(\gamma)}$. Thus $J$ is a $G$-invariant ideal.
	
	Conversely, suppose that $J$ is $G$-invariant. Let $a \in \I_{\gamma^{-1} \cdot x}$, and choose $b \in \mathcal{J}_{s(\gamma)} = C_0(p^{-1}(s(\gamma)), \I)$ such that 
	$b(\gamma^{-1} \cdot x) = a$. Then we have
	\[
		\alpha_{(\gamma, x)}(a) = \alpha_{(\gamma, x)} \bigl( b(\gamma^{-1} \cdot x) \bigr) = \beta_\gamma(b)(x).
	\]
	Since $J$ is $G$-invariant, $\beta_\gamma(b) \in \mathcal{J}_{r(\gamma)} = C_0(p^{-1}(r(\gamma)), \I)$. Thus $\beta_\gamma(b)(x) \in \I_x$, so $\alpha_{(\gamma, x)}
	(\I_{\gamma^{-1} \cdot x}) \subseteq \I_x$. Again, it is not hard to show that $\alpha_{(\gamma, x)}$ maps $\I_{\gamma^{-1} \cdot x}$ onto $\I_x$, so $I$ is 
	$(G \ltimes X)$-invariant.
\end{proof}

\begin{prop}
\label{prop:exactcheck}
	Let $(\A, G \ltimes X, \alpha)$ and $(\B, G, \beta)$ be as above. Let $I \subseteq A$ be a $(G \ltimes X)$-invariant ideal, and let $J = \Theta(I)$ be the corresponding
	$G$-invariant ideal in $B$. Then the sequence
	\[
		0 \to \mathcal{J} \rtimes_{\beta, r} G \to \B \rtimes_{\beta, r} G \to \B/\mathcal{J} \rtimes_{\beta, r} G \to 0
	\]
	is exact if and only if
	\[
		0 \to \I \rtimes_{\alpha, r} (G \ltimes X) \to \A \rtimes_{\alpha, r} (G \ltimes X) \to \A/\I \rtimes_{\alpha, r} (G \ltimes X) \to 0
	\]
	is exact.
\end{prop}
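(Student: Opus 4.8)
The plan is to lift the isomorphism $\Phi$ of Theorem~\ref{thm:redcross} to the ideal and to the quotient, thereby producing a commutative ladder whose two rows are the reduced crossed product sequences in the statement and whose three vertical arrows are isomorphisms; the equivalence of exactness then follows from the purely homological fact that a commuting ladder with invertible rungs transfers exactness at each position. First I would note that restricting the $(G \ltimes X)$-action to the invariant ideal yields a dynamical system $(\I, G \ltimes X, \alpha\vert_I)$, and that applying Proposition~\ref{prop:c0fiber} to the bundle $\I \to X$ along $p : X \to \go$ produces exactly the bundle $\mathcal{J} \to \go$ with fibers $\mathcal{J}_u = \Gamma_0(p^{-1}(u), \I)$, together with the identification $J = \Theta(I)$ --- this is the content of the preceding proposition on invariant ideals. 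The descended $G$-action supplied by Proposition~\ref{prop:action} is then visibly the restriction $\beta\vert_J$ of $\beta$, so Theorem~\ref{thm:redcross} applies verbatim to $(\I, G \ltimes X, \alpha\vert_I)$ and gives an isomorphism $\Phi_I : \I \rtimes_{\alpha\vert_I, r} (G \ltimes X) \to \mathcal{J} \rtimes_{\beta, r} G$. Crucially, $\Phi_I$ is literally the restriction of $\Phi$ to $\Gamma_c(G \ltimes X, r^*\I)$, since both are given by $f \mapsto \bigl[ \gamma \mapsto f(\gamma, \cdot) \bigr]$ and $f(\gamma, x) \in \I_x$ forces $\Phi(f)(\gamma) \in \mathcal{J}_{r(\gamma)}$.

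Second, I would run the same argument for the quotient. Here Proposition~\ref{prop:c0fiber}, applied to the bundle $\A/\I \to X$, identifies $\Gamma_0(X, \A/\I)$ with the $C_0(\go)$-algebra whose bundle has fibers $\Gamma_0(p^{-1}(u), \A/\I)$; the point is to verify that this bundle is $\B/\mathcal{J}$, i.e.\ that fiberwise $\Gamma_0(p^{-1}(u), \A)/\Gamma_0(p^{-1}(u), \I) \cong \Gamma_0(p^{-1}(u), \A/\I)$ (the standard fact that sections of a quotient bundle over a locally compact Hausdorff base form the quotient of section algebras), and that the resulting identification $\A/\I \cong \B/\mathcal{J}$ is the map induced by $\Theta$ on quotients. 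Granting this, Theorem~\ref{thm:redcross} applied to $(\A/\I, G \ltimes X, \alpha^I)$ gives an isomorphism $\Phi^I : \A/\I \rtimes_{\alpha^I, r} (G \ltimes X) \to \B/\mathcal{J} \rtimes_{\beta, r} G$, again realized at the level of continuous compactly supported sections by the same pointwise-in-$x$ formula, hence compatible with the quotient maps $\Gamma_c(G \ltimes X, r^*\A) \to \Gamma_c(G \ltimes X, r^*(\A/\I))$ and $\Gamma_c(G, r^*\B) \to \Gamma_c(G, r^*(\B/\mathcal{J}))$.

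With $\Phi_I$, $\Phi$, and $\Phi^I$ in hand, I would assemble the commutative diagram whose rows are the two sequences appearing in the statement --- with horizontal maps the canonical inclusions and quotient maps --- and whose vertical arrows are these three isomorphisms. Commutativity of the left-hand square is the remark just made that $\Phi_I$ is the restriction of $\Phi$, and commutativity of the right-hand square is the analogous remark that $\Phi^I$ is the map induced by $\Phi$ on quotients; both hold already on the dense subalgebras $\Gamma_c$ and hence everywhere by continuity. Since the vertical maps are isomorphisms, they carry kernels to kernels and images to images, so the top row is exact at a given position precisely when the bottom row is; in particular the two three-term sequences are exact simultaneously.

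I expect the main obstacle to be the bookkeeping in the second paragraph: confirming that the descent machinery of Proposition~\ref{prop:c0fiber} together with Proposition~\ref{prop:action} is genuinely compatible with passing from $\A/\I$ to $\B/\mathcal{J}$ and from $\alpha^I$ to $\beta$ on the quotient, so that the isomorphism produced by Theorem~\ref{thm:redcross} for the quotient system is exactly the one making the right-hand square commute. Everything else reduces either to results already in place (Proposition~\ref{prop:c0fiber}, Theorem~\ref{thm:fullcross}, Theorem~\ref{thm:redcross}, and the preceding proposition on invariant ideals) or to the elementary diagram chase.
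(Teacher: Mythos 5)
Your proposal is correct and follows essentially the same route as the paper: the paper's proof simply observes that the isomorphisms from Theorem \ref{thm:redcross}, applied to the ideal, the algebra, and the quotient, assemble into a commuting ladder of the two sequences, whence exactness transfers. The extra bookkeeping you flag (that $\Phi$ restricts to $\Phi_I$ and descends to $\Phi^I$, and that $\B/\mathcal{J}$ is the bundle associated to $\A/\I$) is exactly what the paper leaves as ``straightforward to check.''
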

\begin{proof}
	It is straightforward to check that the diagram
	\[
		\xymatrix@C=2em{ 0 \ar[r] & \mathcal{J} \rtimes_{\beta, r} G \ar[r] \ar[d] & \B \rtimes_{\beta, r} G \ar[r] \ar[d] & \B/\mathcal{J} \rtimes_{\beta, r} G \ar[r] \ar[d] & 0 \\
		0 \ar[r] & \I \rtimes_{\alpha, r} (G \ltimes X) \ar[r] & \A \rtimes_{\alpha, r} (G \ltimes X) \ar[r] & \A/\I \rtimes_{\alpha, r} (G \ltimes X) \ar[r] & 0
		}
	\]
	commutes, where the vertical arrows are the isomorphisms coming from Theorem \ref{thm:redcross}. Thus one sequence is exact if and only if the other is.
\end{proof}

\begin{thm}
\label{thm:exacttrans}
	Let $G$ be a locally compact Hausdorff groupoid, and let $X$ be a left $G$-space. If $G$ is exact, then the transformation groupoid 
	$G \ltimes X$ is also exact.
\end{thm}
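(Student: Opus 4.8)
The plan is to unwind the definition of exactness for $G \ltimes X$ and then push everything through the correspondence between $(G \ltimes X)$-dynamical systems and $G$-dynamical systems developed above. I would start with an arbitrary separable $(G \ltimes X)$-dynamical system $(\A, G \ltimes X, \alpha)$, where $A = \Gamma_0(X, \A)$, together with an arbitrary $(G \ltimes X)$-invariant ideal $I \subseteq A$, and aim to show that
\[
	0 \to \I \rtimes_{\alpha, r} (G \ltimes X) \to \A \rtimes_{\alpha, r} (G \ltimes X) \to \A/\I \rtimes_{\alpha, r} (G \ltimes X) \to 0
\]
is exact.

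From $(\A, G \ltimes X, \alpha)$, Proposition~\ref{prop:c0fiber} produces a $C_0(\go)$-algebra structure on $A$ with associated bundle $\B \to \go$ and section algebra $B = \Gamma_0(\go, \B) \cong A$ via $\Theta$, and Proposition~\ref{prop:action} equips $\B$ with a $G$-action $\beta$, so that $(\B, G, \beta)$ is a separable groupoid dynamical system. Setting $J = \Theta(I)$, the preceding proposition on invariant ideals shows that $J$ is $G$-invariant. Since $G$ is exact by hypothesis, the sequence
\[
	0 \to \mathcal{J} \rtimes_{\beta, r} G \to \B \rtimes_{\beta, r} G \to \B/\mathcal{J} \rtimes_{\beta, r} G \to 0
\]
is exact. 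By Proposition~\ref{prop:exactcheck}, this is equivalent to exactness of the $(G \ltimes X)$-sequence above, and since $(\A, G \ltimes X, \alpha)$ and $I$ were arbitrary, $G \ltimes X$ is exact.

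Essentially all of the work has been front-loaded into Theorems~\ref{thm:fullcross} and~\ref{thm:redcross} and the surrounding lemmas, so the proof of the theorem itself should amount to little more than assembling these ingredients in the right order. The only points requiring a sentence of care are checking that the constructed system $(\B, G, \beta)$ is again separable (immediate since $B \cong A$ is separable and $G$ is second countable) so that the definition of exactness for $G$ applies, and noting that the Haar system on $G \ltimes X$ used throughout is the standing one $\{\lambda^{p(x)} \times \delta_x\}$. The one genuinely substantive obstacle---producing the reduced-crossed-product isomorphism $\A \rtimes_{\alpha,r}(G \ltimes X) \cong \B \rtimes_{\beta, r} G$ compatibly with passage to invariant quotients---has already been dispatched, so no further difficulty is anticipated in this final step.
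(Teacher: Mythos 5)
Your proposal is correct and follows the paper's own proof essentially verbatim: pass from $(\A, G\ltimes X, \alpha)$ to $(\B, G, \beta)$ via Propositions \ref{prop:c0fiber} and \ref{prop:action}, transport the invariant ideal through $\Theta$, and invoke Proposition \ref{prop:exactcheck} to transfer exactness of the $G$-sequence to the $(G\ltimes X)$-sequence. The added remarks on separability and the standing Haar system are fine but not needed beyond what the paper already assumes.
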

\begin{proof}
	Let $(\A, G \ltimes X, \alpha)$ be a separable groupoid dynamical system, and let $I \subseteq A = \Gamma_0(X, \A)$ be a $(G \ltimes X)$-invariant ideal. Then there is a 
	dynamical system $(\B, G, \beta)$ with $\A \rtimes_{\alpha, r} (G \ltimes X) \cong \B \rtimes_{\beta, r} G$ and an isomorphism $\Theta : A \to B = \Gamma_0(\go, \B)$. The 
	ideal $J = \Theta(I)$ of $B$ is $G$-invariant, and Proposition \ref{prop:exactcheck} gives us an isomorphism of short sequences. If $G$ is exact, then the top sequence is 
	exact, so the bottom one must be exact as well. It follows that $G \ltimes X$ is exact.
\end{proof}

\section{Subgroupoids}
\label{sec:subgroupoids}
Throughout this section, let $G$ denote a locally compact Hausdorff groupoid. We say that a subgroupoid $H \subseteq G$ is \emph{wide} if $\ho = \go$. As an interesting corollary 
of Theorem \ref{thm:exacttrans}, we show that if $G$ is exact, then any wide subgroupoid of $G$ is also exact. We then move on to other types of subgroupoids, which require
different techniques.

Let $H \subseteq G$ be a closed, wide subgroupoid with open range and source maps. Then $H$ acts freely and properly on 
$G$ by right translation, so the orbit space $G/H$ is locally compact and Hausdorff. As we will see in the next result, $G$ acts naturally on $G/H$ by left translation, so we can form 
the transformation groupoid $G \ltimes G/H$. Moreover, there is a natural free and proper action of $G \ltimes G/H$ on the left of $G$, which commutes with the natural $H$-action, 
and we have $(G \ltimes G/H) \backslash G \cong \go = \ho$ and $G/H \cong (G \ltimes G/H)^{\scriptscriptstyle{(0)}}$. In other words, $G$ implements an \emph{equivalence} between 
the groupoids $H$ and $G \ltimes G/H$. This is particularly noteworthy, since groupoid equivalence is known to preserve exactness \cite[Theorem 4.8]{lalondeexact}.

\begin{thm}
\label{thm:wideequiv}
	Let $G$ be a locally compact Hausdorff groupoid with a Haar system, and let $H \subseteq G$ be a closed, wide subgroupoid with open range and source maps. Then the 
	space $G$ is a $(G \ltimes G/H, H)$-equivalence.
\end{thm}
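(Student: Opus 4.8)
The plan is to verify directly that $G$, equipped with the two actions described in the paragraph preceding the statement, satisfies the three bullet points in the definition of a groupoid equivalence. First I would pin down the actions precisely. The right $H$-action on $G$ is the restriction of the multiplication map: $H$ acts on the right of $G$ via $\eta \cdot \zeta = \eta\zeta$ whenever $s(\eta) = r(\zeta)$, with structure map $r : G \to \go = \ho$. Since $H$ is closed, wide, and has open range and source maps, this action is free (cancellation in $G$) and proper (standard fact for closed subgroupoids with these properties; this is where openness of the maps is used, to get that $G/H$ is locally compact Hausdorff). The left action of $G \ltimes G/H$ on $G$ should be $(\gamma, \zeta H)\cdot\zeta = \gamma\zeta$, defined when $r(\zeta) = p(\zeta H)$, where $p : G/H \to \go$ is $\zeta H \mapsto r(\zeta)$ — so in fact the action is $(\gamma, \zeta H)\cdot \zeta = \gamma\zeta$, well-defined precisely because membership of $\zeta$ in the coset $\zeta H$ forces $r(\gamma) = p(\zeta H) = r(\zeta)$, hence $\gamma\zeta$ makes sense. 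Before any of this I need to check that $G$ genuinely acts on the left of $G/H$ by $\gamma \cdot (\zeta H) = (\gamma\zeta)H$ (for $r(\gamma) = r(\zeta)$), verifying continuity and the two $G$-space axioms, so that $G \ltimes G/H$ is even defined.

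Next I would carry out the three verifications in order. (1) \emph{Commuting actions}: $(\gamma, \zeta H)\cdot(\zeta\cdot\eta) = \gamma\zeta\eta = \bigl((\gamma,\zeta H)\cdot\zeta\bigr)\cdot\eta$ by associativity in $G$ — essentially immediate, modulo checking the domains match. (2) \emph{The $(G\ltimes G/H)$-action is free and proper}: freeness is immediate since $(\gamma,\zeta H)\cdot\zeta = \gamma\zeta = \zeta$ forces $\gamma = r(\zeta)$ (a unit), hence $(\gamma, \zeta H)$ is the unit of $G\ltimes G/H$ at $\zeta H$. For properness, I would show the map $(G\ltimes G/H){}_s{*}G \to G\times G$, $\bigl((\gamma,\zeta H),\zeta\bigr)\mapsto(\gamma\zeta,\zeta)$ is proper; this reduces to properness of the analogous map for the $G$-action, or can be deduced from properness of the right $H$-action via the orbit-space identifications. (3) \emph{Structure maps induce the right homeomorphisms}: the structure map $G \to (G\ltimes G/H)^{(0)} = G/H$ is $\zeta \mapsto \zeta H$, which is exactly the quotient map, and it is constant on $H$-orbits, inducing $G/H \cong (G\ltimes G/H)^{(0)}$ — tautologically a homeomorphism. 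The structure map $G \to \ho = \go$ is $r : G \to \go$; its fibers are the sets $G^u = r^{-1}(u)$, and I claim the $(G\ltimes G/H)$-orbit of $\zeta$ is exactly $G^{r(\zeta)}$: indeed $(\gamma,\zeta H)\cdot\zeta = \gamma\zeta$ ranges over all of $G^{r(\zeta)}$ as $\gamma$ ranges over $G^{r(\zeta)}$ (wideness of $H$ is not even needed here, only that $r$ is the structure map), so $(G\ltimes G/H)\backslash G \cong \go = \ho$ via $r$. Conversely the $H$-orbits are the cosets $\zeta H$, and $\zeta H = \zeta' H$ iff $\zeta H,\zeta'H$ have the same image in $G/H$, giving $G/H \cong G/H$; wait — the correct reading is that the structure map $G\to\ho$ for the \emph{right} $H$-action should also be consulted, but by symmetry of the setup (right $H$-action has structure map $r$ as well), both orbit-space identifications check out.

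The main obstacle, I expect, is \emph{properness} — both establishing that the right $H$-action on $G$ is proper (so that $G/H$ is a reasonable space to begin with) and that the left $(G\ltimes G/H)$-action is proper. Freeness and the algebraic identities are all one-line consequences of cancellation and associativity in $G$; continuity of the actions is routine; and the two homeomorphism conditions are close to tautological once the orbits are identified. Properness is where the hypotheses that $H$ is \emph{closed} and has \emph{open} range and source maps genuinely enter. I would handle the $H$-action first using a standard criterion for closed subgroupoids (the map $H{}_s{*}_r G \to G\times G$, $(\eta,\zeta)\mapsto(\eta\zeta,\zeta)$ is a homeomorphism onto its image, which is closed because $H$ is closed in $G$), and then transfer properness of the $(G\ltimes G/H)$-action through the equivalence-type relationship with the $H$-action, or verify it directly by the same kind of argument. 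Once properness is in hand, assembling the three bullet points into the statement "$G$ is a $(G\ltimes G/H, H)$-equivalence" is immediate.
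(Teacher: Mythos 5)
Your overall strategy is the same as the paper's: verify the equivalence axioms directly, with $H$ acting by right translation and $G\ltimes G/H$ by left translation, and reduce properness of the latter to properness of $G$ acting on itself. But there is a concrete error in your third verification. The $(G\ltimes G/H)$-orbits of the left-translation action are \emph{not} the fibers of the range map. An element $(\gamma,[\eta])$ acts on $\zeta$ only when $s(\gamma,[\eta])=[\gamma^{-1}\eta]=[\zeta]$, which forces $s(\gamma)=r(\zeta)$; the product $\gamma\zeta$ then satisfies $s(\gamma\zeta)=s(\zeta)$ while $r(\gamma\zeta)=r(\gamma)$ varies over the orbit of $r(\zeta)$. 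So the orbit of $\zeta$ is $s^{-1}(s(\zeta))$, not $G^{r(\zeta)}$, and your claim that ``$\gamma\zeta$ ranges over all of $G^{r(\zeta)}$ as $\gamma$ ranges over $G^{r(\zeta)}$'' fails twice: $\gamma$ must range over $s^{-1}(r(\zeta))$ for the product to exist, and the products then sweep out an $s$-fiber. Consequently the structure map inducing $(G\ltimes G/H)\backslash G\cong\ho$ is the \emph{source} map $s_G$, as in the paper (injectivity: if $s(x)=s(y)$, then $\gamma=yx^{-1}$ carries $x$ to $y$). Your closing appeal to ``symmetry of the setup'' compounds the error rather than repairing it: the moment map for the right $H$-action $\zeta\cdot\eta=\zeta\eta$ is also $s$, not $r$, since that product requires $s(\zeta)=r(\eta)$. (Relatedly, your opening description of the $H$-action as $\eta\cdot\zeta=\eta\zeta$ with moment map $r$ is a \emph{left} action; you silently switch to the correct right action when you later take the orbits to be the cosets $\zeta H$.)

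Two smaller points. First, your properness argument for the left action is only half of what is needed: after extracting a convergent subnet of $\{\gamma_i\}$ from properness of left translation, one must still show that the $G/H$-coordinates $[\eta_i]$ converge, which the paper does by rewriting $[\eta_i]=\gamma_i\cdot[\gamma_i^{-1}\eta_i]=\gamma_i\cdot[x_i]$. Second, you correctly flag but do not carry out the check that $G$ acts continuously on $G/H$ and that the quotient map $G\to G/H$ is open (the paper cites Muhly for the latter); these are prerequisites for $G\ltimes G/H$ to be a locally compact groupoid at all.
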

\begin{proof}
	We have already mentioned that $H$ acts freely and properly on the right of $G$. The structure map $r_{G} : G \to G/H$ for the left 
	$G \ltimes G/H$-action is given by $r_{G}(x) = [x]$. Notice that this map is open by \cite[Proposition 5.27]{muhly} and clearly descends to a homeomorphism 
	$G/H \to G/H$ given by the identity map. Observe then that $(\gamma, [\eta]) \in G \ltimes G/H$ and $x \in G$ are composable whenever 
	$s(\gamma, [\eta]) = [\gamma^{-1} \eta]$ equals $r_G(x) = [x]$, which implies that $r(x) = s(\gamma)$. Thus left translation in $G$ induces an action of $G \ltimes G/H$ on 
	$G$:
	\[
		(\gamma, [\eta]) \cdot x = \gamma x.
	\]
	To see that the action is free, suppose $(\gamma, [\eta]) \cdot x = x$ for some $x \in G$ and $(\gamma, [\eta]) \in G \ltimes G/H$. Then we have $\gamma x = x$,
	which implies $\gamma = r(x)$. Thus $(\gamma, [\eta]) = (r(x), [\eta])$ is a unit in $G \ltimes G/H$, so the action is free. 
	
	To prove the action is proper, we appeal to \cite[Proposition 1.84]{geoff}. Suppose $\{x_i\}$ and $\{(\gamma_i, [\eta_i])\}$ are nets in $G$ and $G \ltimes G/H$, respectively, 
	such that $x_i \to x$ and $(\gamma_i, [\eta_i]) \cdot x_i \to y$ for some $x, y \in G$. Note that the latter condition really says that $\gamma_i x_i \to y$, so we can 
	conclude that $\{\gamma_i\}$ has a convergent subnet by \cite[Proposition 1.84]{geoff} since $G$ acts properly on itself by left translation. Pass to this subnet, relabel,
	and observe that since $[x_i] = [\gamma_i^{-1} \eta_i]$ for all $i$ and $x_i \to x$, we have $[\gamma_i^{-1} \eta_i] \to [x]$. But then since $\gamma_i \to \gamma$ for
	some $\gamma \in G$, we have
	\[
		\gamma_i \cdot [\gamma_i^{-1} \eta_i] \to \gamma \cdot [x] = [\gamma x],
	\]
	or $[\eta_i] \to [\gamma x]$. Thus the subnet $\{(\gamma_i, [\eta_i])\}$ converges to $(\gamma, \gamma x)$, so $G$ is a proper left $(G \ltimes G/H)$-space.
	
	It is straightforward to see that the actions of $G \ltimes G/H$ and $H$ on $G$ commute, so we just need to check that the structure map $s_G : G \to \ho = \go$
	descends to a homeomorphism of $(G \ltimes G/H) \backslash G$ with $\ho$. Indeed, since $s_G$ is open it suffices to see that it induces a bijection. Certainly if 
	$x, y \in G$ lie in the same $(G \ltimes G/H)$-orbit, then $s_G(x) = s_G(y)$, so $s_G$ does induce a well-defined, surjective map 
	$\tilde{s}_G : (G \ltimes G/H) \backslash G \to \ho$. Now suppose $s_G(x) = s_G(y)$ for some $x, y \in G$. Then we define $\gamma = yx^{-1}$, and observe that
	\[
		(\gamma, [\gamma x]) \cdot x = \gamma x = y,
	\]
	so $x$ and $y$ belong to the same $(G \ltimes G/H)$-orbit. Thus $\tilde{s}_G$ is injective. It follows that $\tilde{s}_G$ is a homeomorphism. Therefore, $G$ is a
	$(G \ltimes G/H, H)$-equivalence.
\end{proof}

In order to discuss exactness for a subgroupoid $H$, we need to require that $H$ is equipped with a Haar system. As a consequence of Theorem \ref{thm:wideequiv},
together with \cite[Theorem 2.1]{danaHaar}, it suffices to require that $H$ has open range and source maps.

\begin{thm}
\label{thm:haar}
	Let $G$ be a locally compact Hausdorff groupoid with a Haar system, and let $H \subseteq G$ be a closed, wide subgroupoid with open range and source maps.
	Then $H$ possesses a Haar system.
\end{thm}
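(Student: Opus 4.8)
The plan is to obtain this as a corollary of the equivalence established in Theorem \ref{thm:wideequiv}, combined with \cite[Theorem 2.1]{danaHaar}, which guarantees that a Haar system is carried across a groupoid equivalence onto any partner groupoid having an open range map. The only real preliminary is to confirm that the transformation groupoid $G \ltimes G/H$ itself possesses a Haar system.

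To that end, I would first check that $G/H$ is a left $G$-space with continuous, open structure map. The assignment $p : G/H \to \go$, $p([x]) = r(x)$, is well defined because $r$ is constant on right $H$-orbits, and it is continuous. For openness, let $q : G \to G/H$ denote the quotient map (which is continuous, and in fact open); since $p \circ q = r$ and $r$ is open because $G$ carries a Haar system, we get $p(U) = r(q^{-1}(U))$ open for every open $U \subseteq G/H$. Thus $G \ltimes G/H$ is a locally compact Hausdorff groupoid, and by the transformation groupoid example of Section \ref{sec:background} it admits the Haar system $\{\lambda^{p([x])} \times \delta_{[x]}\}_{[x] \in G/H}$, where $\{\lambda^u\}$ is the given Haar system on $G$.

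With that in hand, Theorem \ref{thm:wideequiv} tells us that the space $G$ is a $(G \ltimes G/H, H)$-equivalence. Since $G \ltimes G/H$ has a Haar system by the previous step and $H$ is assumed to have open range and source maps, \cite[Theorem 2.1]{danaHaar} produces a Haar system on $H$, which is exactly the assertion.

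I do not expect a genuine obstacle here: the theorem is essentially a packaging of Theorem \ref{thm:wideequiv} together with the cited transfer result. The points that need attention are purely about matching hypotheses, namely verifying that $p$ is open (so that $G \ltimes G/H$ has a Haar system at all) and observing that the open-range-map condition demanded by \cite[Theorem 2.1]{danaHaar} is precisely what is being assumed of $H$.
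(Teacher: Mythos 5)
Your proposal is correct and follows exactly the paper's argument: $G \ltimes G/H$ inherits a Haar system from $G$, and Theorem \ref{thm:wideequiv} together with \cite[Theorem 2.1]{danaHaar} transfers it across the equivalence to $H$. The extra verification that $p : G/H \to \go$ is open (via $p \circ q = r$) is a detail the paper leaves implicit but is part of the same route.
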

\begin{proof}
	If $G$ has a Haar system, then the transformation groupoid $G \ltimes G/H$ also has a Haar system. Since the property of having a Haar system is preserved under equivalence
	of groupoids by \cite[Theorem 2.1]{danaHaar}, it follows from Theorem \ref{thm:wideequiv} that $H$ has a Haar system.
\end{proof}

\begin{thm}
\label{thm:wideexact}
	Let $G$ be a locally compact Hausdorff groupoid with a Haar system, and let $H$ be a closed, wide subgroupoid with open range and source maps. If $G$ is exact, then $H$ 
	is exact.
\end{thm}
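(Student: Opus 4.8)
The plan is to obtain this as a short corollary of the three preceding theorems, with no new technical work required. First I would record that $G/H$ is a left $G$-space: by the hypotheses $H$ is closed, wide, with open range and source maps, so $H$ acts freely and properly on $G$ by right translation and $G/H$ is locally compact Hausdorff, and left translation descends to a continuous $G$-action on $G/H$ with structure map $[x] \mapsto r(x)$, exactly as in the discussion preceding Theorem \ref{thm:wideequiv}. Hence the transformation groupoid $G \ltimes G/H$ is defined, and it inherits a Haar system from the one on $G$. Since $G$ is assumed exact, Theorem \ref{thm:exacttrans} immediately gives that $G \ltimes G/H$ is exact.

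Next I would invoke Theorem \ref{thm:wideequiv}, which says precisely that $G$ is a $(G \ltimes G/H, H)$-equivalence; in particular $H$ and $G \ltimes G/H$ are equivalent groupoids. In order to even speak of exactness for $H$ one needs a Haar system on $H$, and this is supplied by Theorem \ref{thm:haar} (itself a consequence of the equivalence together with \cite[Theorem 2.1]{danaHaar}). Finally, since exactness is preserved under groupoid equivalence by \cite[Theorem 4.8]{lalondeexact}, the exactness of $G \ltimes G/H$ forces $H$ to be exact.

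I do not anticipate a genuine obstacle at this stage: every substantive ingredient — constructing the $G$-action on $G/H$, establishing that $G$ implements the equivalence between $H$ and $G \ltimes G/H$, transferring exactness through the transformation-groupoid construction, and producing a Haar system on $H$ — has already been carried out in Theorems \ref{thm:exacttrans}, \ref{thm:wideequiv}, and \ref{thm:haar}. The only point deserving a moment's care is to check that the standing hypotheses here (closed, wide, open range and source maps, $G$ with a Haar system) are exactly the hypotheses of those three results, which is the case, so the argument is simply a matter of chaining them together.
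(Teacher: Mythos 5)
Your proposal is correct and is essentially identical to the paper's own proof: apply Theorem \ref{thm:exacttrans} to conclude $G \ltimes G/H$ is exact, then transfer exactness to $H$ via the equivalence of Theorem \ref{thm:wideequiv} and \cite[Theorem 4.8]{lalondeexact}, with Theorem \ref{thm:haar} supplying the Haar system on $H$. No gaps.
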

\begin{proof}
	If $G$ is exact, then Theorem \ref{thm:exacttrans} guarantees the transformation groupoid $G \ltimes G/H$ is exact. But $G \ltimes G/H$ is equivalent to $H$ by
	Theorem \ref{thm:wideequiv}, which implies that $H$ is also exact by \cite[Theorem 4.8]{lalondeexact}.
\end{proof}

\begin{cor}
\label{cor:isoexact}
	Let $G$ be a locally compact Hausdorff groupoid with a Haar system, and assume $G$ has continuously varying stabilizers. Then the isotropy groupoid $\iso(G)$
	is exact.
\end{cor}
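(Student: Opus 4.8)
The plan is to recognize $\iso(G)$ as a closed, wide subgroupoid of $G$ whose range and source maps are open, and then to apply Theorem~\ref{thm:wideexact} directly.

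First I would check that $\iso(G)$ is closed in $G$. Since $G$ is Hausdorff, the diagonal $\Delta = \{(u,u) : u \in \go\}$ is closed in $\go \times \go$, and the map $(r,s) : G \to \go \times \go$ is continuous, so $\iso(G) = (r,s)^{-1}(\Delta)$ is closed. Moreover $\iso(G)$ is wide, since $r(u) = s(u) = u$ for every unit $u$ forces $\go \subseteq \iso(G)$.

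The one substantive point is the role of the standing hypothesis. The assumption that $G$ has continuously varying stabilizers ensures that the restriction of the range map $r : \iso(G) \to \go$ is open; because $\iso(G)$ is invariant under inversion and inversion interchanges $r$ and $s$, the source map of $\iso(G)$ is then open as well. Granting this, $\iso(G)$ is a closed, wide subgroupoid of $G$ with open range and source maps. Theorem~\ref{thm:haar} then supplies a Haar system on $\iso(G)$ --- so that exactness of $\iso(G)$ is a meaningful notion --- and Theorem~\ref{thm:wideexact} shows that exactness of $G$ forces $\iso(G)$ to be exact.

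The main (and essentially only nonformal) obstacle is to make the middle paragraph precise: one must fix the exact meaning of ``continuously varying stabilizers'' and confirm that it is equivalent to, or at least implies, openness of $r|_{\iso(G)}$. This is the familiar criterion for the isotropy bundle to admit a Haar system, and I would quote it rather than reprove it; everything else in the argument is formal bookkeeping built on the theorems already established in this section.
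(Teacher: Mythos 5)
Your proposal is correct and follows essentially the same route as the paper: identify $\iso(G)$ as a closed, wide subgroupoid whose range and source maps are open (this being the content of the ``continuously varying stabilizers'' hypothesis, which the paper likewise quotes rather than reproves), and then invoke Theorem~\ref{thm:wideexact}. Your explicit verification that $\iso(G) = (r,s)^{-1}(\Delta)$ is closed and contains $\go$ is a small amount of bookkeeping the paper leaves implicit, but nothing differs in substance.
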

\begin{proof}
	Recall that $G$ has continuously varying stabilizers if and only if $\iso(G)$ has open range and source maps, or equivalently if $\iso(G)$ has a Haar system. Thus
	$\iso(G)$ is a wide subgroupoid of $G$ with open range and source maps, so it is exact by Theorem \ref{thm:wideexact}.
\end{proof}

Under certain conditions, we can extend Theorem \ref{thm:wideexact} slightly. Suppose $H$ is only \emph{orbit-wide}, meaning $\ho$ meets every orbit in $\go$. Put 
$X = s^{-1}(\ho)$. As observed in \cite[Example 1.63]{geoff}, $X$ is saturated with respect to the source map, so the restriction $s : X \to \ho$ is open. It is then easy to see that the 
reduction $G \vert_{\ho}$ acts freely and properly on the right of $X$. If the restriction of the range map to $X$ also happens to be open, then $X$ is also a free and proper left 
$G$-space. Indeed, $X$ becomes a $(G,G \vert_\ho)$-equivalence, as observed in \cite[Example 5.33(7)]{muhly}. However, Muhly also notes in \cite[Example 5.29(2)]{muhly} that 
the restriction $r : X \to \go$ need not be open in general. In cases where it is open, we have the following result.

\begin{thm}
	Let $G$ be a locally compact Hausdorff groupoid with a Haar system, and suppose $H \subseteq G$ is a closed, orbit-wide subgroupoid with open range and source maps. 
	Let $X = s^{-1}(\ho)$, and assume the range map $r \vert_X : X \to \go$ is open. If $G$ is exact, then so is $H$.
\end{thm}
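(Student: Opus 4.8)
The plan is to reduce this to the wide case already settled in Theorem~\ref{thm:wideexact}, using the reduction $G\vert_\ho$ as a bridge. The point is that the space $X = s^{-1}(\ho)$ implements a groupoid equivalence between $G$ and $G\vert_\ho$, so that both exactness and the existence of a Haar system transfer from $G$ to $G\vert_\ho$; and inside $G\vert_\ho$ the subgroupoid $H$ is closed and \emph{wide}, which is squarely the setting of Theorem~\ref{thm:wideexact}.

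First I would record, as noted in the paragraph preceding the statement and in \cite[Example 5.33(7)]{muhly}, that $X = s^{-1}(\ho)$ is a $(G, G\vert_\ho)$-equivalence. Here the orbit-wide hypothesis on $H$ is precisely what forces the source map to induce a homeomorphism $G \backslash X \cong \ho$, while the assumed openness of $r\vert_X : X \to \go$ is what makes the left $G$-action on $X$ proper; the freeness of the two actions, their commutativity, and the homeomorphism $X/(G\vert_\ho) \cong \go$ come essentially for free from \cite[Example 1.63]{geoff} and the fact that $H$ is closed. Consequently $G$ and $G\vert_\ho$ are equivalent groupoids, and $G\vert_\ho$, being a reduction of $G$, still satisfies the paper's blanket hypotheses.

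Next, since $G$ has a Haar system and this property is an equivalence invariant \cite[Theorem 2.1]{danaHaar}, the reduction $G\vert_\ho$ has a Haar system; and since $G$ is exact and exactness is an equivalence invariant \cite[Theorem 4.8]{lalondeexact}, $G\vert_\ho$ is exact. Now $H$ is closed in $G$, hence closed in $G\vert_\ho$, and $H$ and $G\vert_\ho$ have the same unit space $\ho$, so $H$ is a closed, wide subgroupoid of $G\vert_\ho$ with open range and source maps. Theorem~\ref{thm:haar}, applied with $G\vert_\ho$ in place of $G$, then shows $H$ has a Haar system, and Theorem~\ref{thm:wideexact}, again applied with $G\vert_\ho$ in place of $G$, yields that $H$ is exact, completing the argument.

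I do not expect a genuine obstacle, since the substantive work was already carried out in Theorems~\ref{thm:exacttrans}, \ref{thm:wideequiv}, and \ref{thm:wideexact}. The one step demanding real care is confirming that $X$ is truly a $(G, G\vert_\ho)$-equivalence---in particular the identification $G \backslash X \cong \ho$, which is where orbit-wideness is used, together with properness of the left $G$-action, which is where openness of $r\vert_X$ is used; both are furnished by Muhly's results. A secondary, routine matter is verifying that $G\vert_\ho$ meets the standing assumptions so that Theorems~\ref{thm:haar} and~\ref{thm:wideexact} may legitimately be invoked for it.
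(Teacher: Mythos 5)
Your proof is correct and follows essentially the same route as the paper: invoke \cite[Example 5.33(7)]{muhly} to see that $X = s^{-1}(\ho)$ is a $(G, G\vert_\ho)$-equivalence, transfer the Haar system and exactness to $G\vert_\ho$ via \cite{danaHaar} and \cite{lalondeexact}, and then apply Theorems~\ref{thm:haar} and~\ref{thm:wideexact} to $H$ as a closed wide subgroupoid of $G\vert_\ho$. The only quibble is a minor misattribution of roles: the openness of $r\vert_X$ is what makes $X$ a left $G$-space (and hence an equivalence) at all --- the left translation action is automatically free and proper --- but this does not affect the argument.
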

\begin{proof}
	As observed in \cite[Example 5.33(7)]{muhly}, $X$ is a $(G, G\vert_\ho)$-equivalence, since $r : X \to \go$ is open. Therefore, $G \vert_\ho$ has a Haar system by 
	\cite[Theorem 2.1]{danaHaar}, and it follows from \cite[Theorem 4.3]{lalondeexact} that $G \vert_\ho$ is exact. Now $H$ is a wide subgroupoid of $G \vert_\ho$ with open
	range and source maps, so Theorem \ref{thm:haar} guarantees that $H$ has a Haar system and Corollary \ref{thm:wideexact} implies that $H$ is exact.
\end{proof}

Now we turn to the question of whether exactness passes to general subgroupoids. That is, if $H$ is a subgroupoid of an exact groupoid $G$, can we still conclude that $H$ is 
exact without assuming it is wide or orbit-wide? Armed with our result for wide subgroupoids, it suffices to know whether reductions of exact groupoids are exact. Specifically, 
we consider reductions to closed invariant subsets of $\go$.

\begin{thm}
\label{thm:invariantexact}
	Let $G$ be a locally compact Hausdorff groupoid with a Haar system, and suppose $F \subseteq \go$ is a closed invariant set. If $G$ is exact, then the reduction 
	$H=G \vert_F$ is exact.
\end{thm}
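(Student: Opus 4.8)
The plan is to reduce the exactness of $H=G\vert_F$ to that of $G$ by turning an arbitrary $H$-dynamical system (and invariant ideal) into a $G$-dynamical system via \emph{extension by zero off $F$}, in such a way that the two reduced short sequences are literally identified. First I would record the easy structural points: $H=G_F=G^F$ is a closed subgroupoid with $\ho=F$, and since $F$ is invariant we have $G^u=H^u$ for every $u\in F$, so the Haar system of $G$ restricts to one on $H$ and reduced crossed products by $H$ make sense. Now let $(\A,H,\alpha)$ be a separable $H$-dynamical system with section algebra $A=\Gamma_0(F,\A)$, and let $\I\subseteq A$ be an $H$-invariant ideal. Because $F$ is closed in $\go$, restriction of functions gives a surjection $C_0(\go)\to C_0(F)$, through which $A$ becomes a $C_0(\go)$-algebra; by the correspondence between $C_0(\go)$-algebras and upper semicontinuous $C^*$-bundles (Appendix C of \cite{TFB2}) this produces an upper semicontinuous $C^*$-bundle $\widetilde{\A}\to\go$ with $\widetilde{\A}_u=\A_u$ for $u\in F$ and $\widetilde{\A}_u=0$ for $u\in\go\setminus F$, and with section algebra $\Gamma_0(\go,\widetilde{\A})=A$. (The total space of $\widetilde{\A}$ need not be Hausdorff; this is permitted for upper semicontinuous bundles and will cause no trouble.) Since $F$ is invariant, the fibres of $\widetilde{\A}$ over both $s(\gamma)$ and $r(\gamma)$ vanish whenever $\gamma\notin H$, so the family $\widetilde{\alpha}$ with $\widetilde{\alpha}_\gamma=\alpha_\gamma$ for $\gamma\in H$ and $\widetilde{\alpha}_\gamma=0$ otherwise should define a continuous $G$-action on $\widetilde{\A}$ (continuity to be checked, via \cite[Lemma 4.3]{mw08}, at the level of the $C_0(G)$-linear map $s^*A\to r^*A$), so that $(\widetilde{\A},G,\widetilde{\alpha})$ is a $G$-dynamical system; and using invariance of $F$ again, $\I$ remains $G$-invariant, with the bundles over $\go$ attached to $\I$ and to $A/\I$ being the zero-extensions of their counterparts over $F$.

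The crucial point is that the reduced crossed products by $G$ of these zero-extended systems coincide with the reduced crossed products by $H$ of the originals. A continuous section of $r^*\widetilde{\A}$ over $G$ vanishes at every $\gamma$ with $r(\gamma)\notin F$, hence is supported in $H$; so restriction to $H$ identifies $\Gamma_c(G,r^*\widetilde{\A})$ with $\Gamma_c(H,r^*\A)$, the inverse being extension by zero, which is continuous exactly because of the way $\widetilde{\A}$ is topologised over the boundary of $F$. This identification respects convolution and involution, since over a point $\gamma\in H$ these involve only the fibres over $G^{r(\gamma)}=H^{r(\gamma)}$ and $\widetilde{\alpha}$ restricted there is $\alpha$. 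The same reasoning identifies the right Hilbert module built from $\Gamma_c(G,s^*\widetilde{\A})$ with the one built from $\Gamma_c(H,s^*\A)$, along with the actions on them, so the regular representations agree and $\widetilde{\A}\rtimes_{\widetilde{\alpha},r}G\cong\A\rtimes_{\alpha,r}H$, with the analogous statements for $\I$ and $A/\I$. As all three isomorphisms are implemented by the single restriction-to-$H$ map, they assemble into a commuting diagram identifying the reduced sequence for $(\widetilde{\A},G,\widetilde{\alpha})$ and $\I$ with the reduced sequence for $(\A,H,\alpha)$ and $\I$.

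With that in hand the theorem follows: exactness of $G$ makes
\[
	0\to\I\rtimes_{\widetilde{\alpha},r}G\to\widetilde{\A}\rtimes_{\widetilde{\alpha},r}G\to\widetilde{\A}/\I\rtimes_{\widetilde{\alpha},r}G\to0
\]
exact, hence
\[
	0\to\I\rtimes_{\alpha,r}H\to\A\rtimes_{\alpha,r}H\to\A/\I\rtimes_{\alpha,r}H\to0
\]
is exact, and since $(\A,H,\alpha)$ and $\I$ were arbitrary, $H$ is exact. The part that will need the most care is the bundle-level bookkeeping in the middle paragraph—checking that $\widetilde{\alpha}$ really is continuous and that extension by zero carries $\Gamma_c(H,r^*\A)$ into $\Gamma_c(G,r^*\widetilde{\A})$—since both come down to unwinding the topology on the upper semicontinuous bundle attached to the $C_0(\go)$-algebra $A$; by contrast the remaining checks (that zero-extension is compatible with passing to $\I$ and to $A/\I$, and that the resulting square of crossed-product isomorphisms commutes) should be routine.
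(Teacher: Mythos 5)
Your proposal is correct and follows essentially the same route as the paper: extend the $H$-system to a $G$-system by making $A=\Gamma_0(F,\A)$ into a $C_0(\go)$-algebra whose bundle has zero fibers off $F$, extend the action trivially using invariance of $F$, identify $\widetilde{\A}\rtimes_{\widetilde{\alpha},r}G$ with $\A\rtimes_{\alpha,r}H$, and transfer the reduced exact sequence. The only difference is cosmetic — you spell out the identification of the section spaces and regular representations a bit more explicitly than the paper, which simply asserts the equality of the crossed products.
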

\begin{proof}
	Since $F$ is invariant, $H$ has a Haar system by \cite[Proposition 5.17]{geoff}. Let $(\A, H, \alpha)$ be a separable groupoid dynamical system, and let $A = \Gamma_0(F, \A)$.
	Since $A$ is a $C_0(F)$-algebra, there is a continuous map $\sigma : \Prim A \to F$. By composing with the inclusion map $F \hookrightarrow \go$, we obtain a continuous
	map $\Prim A \to \go$, so $A$ is also a $C_0(\go)$-algebra. The action of $C_0(\go)$ on $A$ is characterized by
	\[
		(\varphi \cdot a)(u) = \varphi(u) a(u)
	\]
	for $u \in F$. Thus if $u \in F$, we have
	\[
		I_u = C_{0, u}(\go) \cdot A = C_{0, u}(F) \cdot A,
	\] 
	since restriction clearly yields a surjection $C_{0,u}(\go) \to C_{0, u}(F)$. Hence $A/I_u = \A_u$ for $u \in F$. If $u \not\in F$, we claim that $I_u = A$. Given 
	$\varphi \in C_c(F)$, we can use the Tietze extension theorem to find a function $\psi \in C_c(\go)$ with $\psi(u) = 0$ and $\psi(v) = \varphi(v)$ for all 
	$v \in \supp(\varphi)$. Then given $a \in A$, for all $v \in F$ we have
	\[
		(\psi \cdot a)(v) = \psi(v) a(v) = \varphi(v) a(v).
	\]
	Thus $\psi \cdot a = \varphi \cdot a$, and it follows that $C_{0, u}(\go) \cdot A = C_0(F) \cdot A = A$. Therefore, the 
	upper semicontinuous $C^*$-bundle $\B \to \go$ associated to $A$ has fibers
	\[
		\B_u = \begin{cases}
			\A_u & \text{ if } u \in F \\
			0 & \text{ if } u \not \in F.
		\end{cases}
	\]
	Since $F$ is invariant, it is easy to see that $G$ acts on $\B$ via the family $\{\beta_\gamma\}_{\gamma \in G}$, where $\beta_\gamma = 
	\alpha_\gamma$ if $\gamma \in H$, and $\beta_\gamma$ is trivial otherwise. Then
	\[
		\B \rtimes_{\beta, r} G = \A \rtimes_{\alpha, r} H.
	\]
	Suppose $I \subseteq A$ is an $H$-invariant ideal. Then $I$, viewed as an ideal of $B$, is $G$-invariant. If $G$ is assumed to be exact,
	then the sequence
	\[
		0 \to \I \rtimes_{\beta, r} G \to \B \rtimes_{\beta, r} G \to \B/\I \rtimes_{\beta, r} G \to 0
	\]
	is exact. It is straightforward to check that the diagram
	\[
		\xymatrix{
			0 \ar[r] & \I \rtimes_{\beta, r} G \ar[r] \ar[d] & \B \rtimes_{\beta, r} G \ar[r] \ar[d] &  \B/\I \rtimes_{\beta, r} G \ar[r] \ar[d] & 0 \\
			0 \ar[r] & \I \rtimes_{\beta, r} H \ar[r] & \A \rtimes_{\beta, r} H \ar[r] &  \A/\I \rtimes_{\beta, r} H \ar[r] & 0
		}
	\]
	commutes, and the vertical arrows are isomorphisms, so the bottom row is exact. Since $(\A, H, \alpha)$ was an arbitrary dynamical system, it 
	follows that $H$ is exact.
\end{proof}

\begin{cor}
	Let $G$ be an exact groupoid, and $H \subseteq G$ a closed subgroupoid with a Haar system. If $\ho$ is an invariant subset of $\go$, then $H$ is exact.
\end{cor}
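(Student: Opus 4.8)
The plan is to reduce the problem to the wide case already handled by Theorem \ref{thm:wideexact}, by passing through the reduction of $G$ to $\ho$. The first thing I would observe is that $\ho$ is not merely invariant but also \emph{closed} in $\go$: since $G$ is Hausdorff, its unit space $\go$ is closed in $G$, and $H$ is closed in $G$ by hypothesis, so $\ho = H \cap \go$ is closed in $G$ and hence in $\go$. Thus $\ho$ is a closed invariant subset of $\go$, which is exactly the hypothesis of Theorem \ref{thm:invariantexact}. Applying that theorem, the reduction $G \vert_{\ho}$ is exact; note also that $G \vert_{\ho}$ carries a Haar system (by \cite[Proposition 5.17]{geoff}, as used in the proof of Theorem \ref{thm:invariantexact}), so it makes sense to speak of its exactness.

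Next I would regard $H$ as a subgroupoid of $G \vert_{\ho}$ rather than of $G$. Since $(G \vert_{\ho})^{\scriptscriptstyle{(0)}} = \ho$ and $H$ has unit space $\ho$, the subgroupoid $H$ is \emph{wide} in $G \vert_{\ho}$. It is closed in $G \vert_{\ho}$ because it is closed in $G$. Finally, since $H$ is assumed to possess a Haar system, its range and source maps onto $\ho$ are open (a standard consequence of the existence of a Haar system). Hence $H$ is a closed, wide subgroupoid of $G \vert_{\ho}$ with open range and source maps.

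The conclusion is then immediate from Theorem \ref{thm:wideexact}, applied with $G \vert_{\ho}$ playing the role of $G$: as $G \vert_{\ho}$ is exact, so is $H$.

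The one substantive point is the very first step—recognizing that the hypotheses ``$\ho$ invariant'' and ``$H$ closed'' together force $\ho$ to be a \emph{closed} invariant set, which is what makes Theorem \ref{thm:invariantexact} applicable and bridges the gap to the wide-subgroupoid result. Everything after that is bookkeeping, and there is no circularity, since Theorems \ref{thm:invariantexact} and \ref{thm:wideexact} are established independently earlier in this section.
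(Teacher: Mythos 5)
Your proof is correct and follows essentially the same route as the paper: apply Theorem \ref{thm:invariantexact} to the closed invariant set $\ho$ to get exactness of $G\vert_{\ho}$, then apply Theorem \ref{thm:wideexact} to $H$ as a closed wide subgroupoid of $G\vert_{\ho}$. Your explicit verification that $\ho$ is closed and that the Haar system on $H$ supplies the open range and source maps is a welcome filling-in of details the paper leaves implicit.
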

\begin{proof}
	By Theorem \ref{thm:invariantexact}, the reduction $G \vert_\ho$ is exact since $\ho$ is a closed invariant set. Then $H$ is a wide subgroupoid of $G \vert_\ho$, hence it is
	exact by Theorem \ref{thm:wideexact}.
\end{proof}

\begin{cor}
\label{cor:exactfibers}
	Let $G$ be an exact group bundle. Then for each $u \in \go$, the fiber $G_u$ is an exact group.
\end{cor}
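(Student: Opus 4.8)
The plan is to realize each fiber $G_u$ as the reduction of $G$ to a closed invariant singleton in $\go$ and then invoke Theorem~\ref{thm:invariantexact}. First I would use the defining feature of a group bundle: the range and source maps coincide, so for every $\gamma \in G$ we have $\gamma \cdot s(\gamma) = r(\gamma) = s(\gamma)$. In other words, the canonical action of $G$ on $\go$ is trivial, and every orbit in $\go$ is a singleton. Consequently, for each $u \in \go$ the set $F = \{u\}$ is invariant, and it is closed because $\go$ is Hausdorff.

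Next I would apply Theorem~\ref{thm:invariantexact} with this choice of $F$: since $G$ is exact and $F \subseteq \go$ is closed and invariant, the reduction $G\vert_F$ is exact as a groupoid. But $G\vert_{\{u\}}$ is precisely the isotropy group $G_u$, and (as noted in the proof of Theorem~\ref{thm:invariantexact}) it automatically carries a Haar system, namely a Haar measure on the locally compact group $G_u$.

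Finally I would observe that for a locally compact group $H$, regarded as a groupoid with one-point unit space, a groupoid dynamical system is nothing more than an ordinary $C^*$-dynamical system, the groupoid full and reduced crossed products agree with the usual full and reduced crossed products, and $H$-invariant ideals coincide with the ordinary invariant ideals. Hence groupoid exactness of $G_u$ is the same as exactness of $G_u$ in the sense of Kirchberg and Wassermann, which is exactly what is claimed. There is essentially no obstacle in this argument; the only two points needing a word of justification are that a group bundle acts trivially on its unit space (so that singletons are invariant) and that the groupoid and group notions of exactness coincide for groups.
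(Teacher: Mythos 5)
Your proof is correct and follows exactly the paper's argument: realize $G_u$ as the reduction of $G$ to the closed invariant singleton $\{u\}$ (invariant because a group bundle acts trivially on its unit space) and apply Theorem~\ref{thm:invariantexact}. The extra remarks about the Haar system and the agreement of groupoid and group exactness are fine but not needed beyond what the paper already takes for granted.
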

\begin{proof}
	For each $u \in \go$, the singleton $\{u\}$ is a closed invariant set, and the reduction to $\{u\}$ is precisely $G_u$. Thus the result is immediate from Theorem 
	\ref{thm:invariantexact}.
\end{proof}

In the event that $G$ has continuously varying stabilizers, we know that $\iso(G)$ is an exact group bundle by Corollary \ref{cor:isoexact}. We can now conclude that all of the
isotropy groups must be exact as well.

\begin{cor}
	Let $G$ be an exact groupoid, and assume $G$ has continuously varying stabilizers. Then for each $u \in \go$, the isotropy group $G \vert_u$ is exact.
\end{cor}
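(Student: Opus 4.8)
The plan is to obtain this as an immediate consequence of the two preceding corollaries. First I would invoke Corollary \ref{cor:isoexact}: the hypothesis that $G$ has continuously varying stabilizers is precisely the statement that $\iso(G)$ has open range and source maps (equivalently, admits a Haar system), so $\iso(G)$ is a closed, wide subgroupoid of $G$ with open range and source maps. Since $G$ is exact, Corollary \ref{cor:isoexact} gives that $\iso(G)$ is exact.

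Next I would note that $\iso(G)$ is in fact a \emph{group bundle} over $\go$: for $\gamma \in \iso(G)$ we have $r(\gamma) = s(\gamma)$, so the range and source maps of $\iso(G)$ coincide, and the fiber over $u \in \go = \iso(G)^{\scriptscriptstyle{(0)}}$ is exactly $\{\gamma : r(\gamma) = s(\gamma) = u\} = G\vert_u$, the isotropy group at $u$. Thus $\iso(G)$ is an exact group bundle whose fiber at $u$ is the isotropy group $G\vert_u$.

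Finally I would apply Corollary \ref{cor:exactfibers} to the exact group bundle $\iso(G)$: each fiber $\iso(G)_u = G\vert_u$ is an exact group. As $u \in \go$ was arbitrary, this is the desired conclusion. (Equivalently, one could bypass Corollary \ref{cor:exactfibers} entirely: since $\iso(G)$ is a group bundle, every singleton $\{u\} \subseteq \iso(G)^{\scriptscriptstyle{(0)}}$ is closed and invariant, so Theorem \ref{thm:invariantexact} applied to the reduction $\iso(G)\vert_{\{u\}} = G\vert_u$ yields exactness of $G\vert_u$.)

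I do not anticipate any genuine obstacle, since all the content is already packaged in Corollary \ref{cor:isoexact} and Corollary \ref{cor:exactfibers}. The only things that deserve a sentence of verification are that continuously varying stabilizers makes $\iso(G)$ a wide subgroupoid with open range and source maps (so that Corollary \ref{cor:isoexact} is applicable) and that $\iso(G)$ is literally a group bundle with fiber $G\vert_u$ over $u$ (so that Corollary \ref{cor:exactfibers} is applicable); both are essentially definitional.
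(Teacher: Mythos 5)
Your proposal is correct and is essentially the paper's own argument: the paper derives this corollary immediately from the observation that Corollary \ref{cor:isoexact} makes $\iso(G)$ an exact group bundle, whose fibers are then exact groups by Corollary \ref{cor:exactfibers}. Your verification that $\iso(G)$ is a closed, wide subgroupoid with open range and source maps (equivalently, a group bundle with a Haar system, with fiber $G\vert_u$ over $u$) is exactly the definitional check the paper leaves implicit.
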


If $G$ does not have continuously varying stabilizers, the situation is a little murkier. However, if we assume the orbit space is sufficiently nice, then we can still argue that the
isotropy groups are exact.

\begin{cor}
	Let $G$ be an exact groupoid, and assume that the orbit space $\go/G$ is $T_1$. Then for each $u \in \go$, the isotropy group $G \vert_u$ is exact.
\end{cor}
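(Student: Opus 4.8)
The plan is to reduce to the transitive case and then invoke equivalence of groupoids. First I would let the orbit map $q \colon \go \to \go/G$ do the topological work: $q$ is continuous for the quotient topology, and since $\go/G$ is $T_1$ the singleton $\{q(u)\}$ is closed, so the orbit $[u] = q^{-1}(\{q(u)\})$ is closed in $\go$. Being a single orbit, $[u]$ is automatically invariant. Thus $F := [u]$ is a closed invariant subset of $\go$, and Theorem \ref{thm:invariantexact} applies directly: the reduction $H := G\vert_{[u]}$ is exact. (As in the proof of that theorem, $H$ inherits a Haar system from $G$ via \cite[Proposition 5.17]{geoff}, and it is locally compact, Hausdorff, and second countable as a subspace of $G$.)

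Next I would exploit the transitivity of $H$: its unit space is the single orbit $[u]$, so $H$ is a transitive groupoid, and its isotropy group at $u$ is precisely $G\vert_u$. The classical input here is that a second countable locally compact transitive groupoid with a Haar system is equivalent, as a groupoid, to its isotropy group at any unit; the equivalence is implemented by $Z = s_H^{-1}(u)$, with $H$ acting on the left by left translation and $G\vert_u$ acting freely and properly on the right by right translation, the structure maps being $r\vert_Z \colon Z \to [u]$ and $s\vert_Z \colon Z \to \{u\}$. I would cite this (see, e.g., \cite{muhly}) rather than reprove it. Since $G\vert_u$ is a locally compact group it carries Haar measure, so the hypotheses for a genuine groupoid equivalence are met.

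Finally, exactness is invariant under equivalence of groupoids by \cite[Theorem 4.8]{lalondeexact}. Applying this to the equivalence between $H = G\vert_{[u]}$ and $G\vert_u$, together with the exactness of $H$ established above, we conclude that $G\vert_u$ is exact.

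I expect the only genuinely substantive ingredient to be the equivalence of the transitive groupoid $H$ with its isotropy group --- concretely, checking that the right $G\vert_u$-action on $s_H^{-1}(u)$ is proper and that $r\vert_{s_H^{-1}(u)}$ is open, so that the induced map $H\backslash s_H^{-1}(u) \to [u]$ is a homeomorphism. This is standard in the second countable, Haar-system setting, so I would quote it; everything else --- closedness and invariance of the orbit, the appeal to Theorem \ref{thm:invariantexact}, and the equivalence-invariance of exactness --- is formal.
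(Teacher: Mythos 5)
Your proposal is correct and follows the paper's own argument essentially verbatim: orbits are closed because $\go/G$ is $T_1$, Theorem \ref{thm:invariantexact} gives exactness of the reduction $G\vert_{[u]}$, and the equivalence of this transitive groupoid with its isotropy group $G\vert_u$ together with the equivalence-invariance of exactness finishes the proof. The only cosmetic difference is the citation used for the last step (the paper invokes Theorem 4.3 of \cite{lalondeexact} rather than Theorem 4.8), but the substance is identical.
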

\begin{proof}
	The requirement that $\go/G$ is $T_1$ guarantees that orbits are closed in $\go$. Let $F = [u]$ be the orbit of $u$. Then $F$ is closed and invariant, so Theorem 
	\ref{thm:invariantexact} guarantees that the reduction $G \vert_F$ is exact. But $G \vert_F$ is a transitive groupoid, which is equivalent to the isotropy group $G \vert_u$. 
	Theorem 4.3 of \cite{lalondeexact} then implies that $G_u$ is exact.
\end{proof}

We can even get a partial result in the other direction. That is, we show that the existence of a certain exact subgroupoid of $G$ guarantees that $G$ is exact. This sort of 
result appears to have been observed in the group case in \cite{claire02}.

\begin{prop}
\label{prop:exactext}
	Let $G$ be a locally compact Hausdorff groupoid, and suppose $H \subseteq G$ is a closed, wide, amenable subgroupoid such that the map $p : G/H \to \go$
	defined by $p([\gamma]) = r(\gamma)$ is proper. Then $G$ is exact.
\end{prop}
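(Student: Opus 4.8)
The plan is to derive the exactness of $G$ from that of the transformation groupoid $G \ltimes G/H$: the equivalence furnished by Theorem \ref{thm:wideequiv} takes care of $G \ltimes G/H$, and the properness of $p$ is what lets one push the conclusion back down to $G$.

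First I would check that $G \ltimes G/H$ is exact. Speaking of amenability and of reduced crossed products for $H$ presupposes that $H$ carries a Haar system, hence that $H$ has open range and source maps, so Theorem \ref{thm:wideequiv} applies and exhibits $G$ as a $(G \ltimes G/H, H)$-equivalence. Since measurewise amenability is preserved under equivalence \cite[Theorem 3.2.16]{ananth-renault}, the amenability of $H$ forces $G \ltimes G/H$ to be amenable. An amenable groupoid is automatically exact: over an amenable groupoid the full and reduced crossed products coincide for every dynamical system, while the full crossed product functor is always exact (as recalled in Section \ref{sec:background}). Hence $G \ltimes G/H$ is exact.

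Next I would transfer exactness to $G$. Fix a separable $G$-dynamical system $(\A, G, \alpha)$ and a $G$-invariant ideal $I \subseteq A = \Gamma_0(\go, \A)$. Pulling $\A$ back along $p : G/H \to \go$ produces a bundle $p^*\A$ over $G/H = (G \ltimes G/H)^{\scriptscriptstyle{(0)}}$ on which $G \ltimes G/H$ acts in the evident way; properness of $p$ is precisely what guarantees that the pullback of a $\Gamma_0$-section is again a $\Gamma_0$-section, so that $A$ sits inside $\Gamma_0(G/H, p^*\A) \cong \A \otimes_{C_0(\go)} C_0(G/H)$ as a $G$-equivariant $C^*$-subalgebra, with $I$ landing in the $G$-invariant ideal $\I \otimes_{C_0(\go)} C_0(G/H)$. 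Applying Theorem \ref{thm:redcross} to the $(G \ltimes G/H)$-system $p^*\A$ identifies $p^*\A \rtimes_r (G \ltimes G/H)$ with $(\A \otimes_{C_0(\go)} C_0(G/H)) \rtimes_r G$, compatibly with ideals and quotients. Since $G \ltimes G/H$ is exact, the reduced crossed product sequence for $p^*\A$ is exact, and therefore so is
\[
	0 \to (\I \otimes_{C_0(\go)} C_0(G/H)) \rtimes_r G \to (\A \otimes_{C_0(\go)} C_0(G/H)) \rtimes_r G \to ((\A/\I) \otimes_{C_0(\go)} C_0(G/H)) \rtimes_r G \to 0.
\]

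It remains to deduce exactness of $0 \to \I \rtimes_r G \to \A \rtimes_r G \to \A/\I \rtimes_r G \to 0$ from the displayed sequence, and this is the step I expect to be the main obstacle. The three $G$-equivariant inclusions of $\A$, of $\I$, and of $\A/\I$ into their balanced tensor products with $C_0(G/H)$ assemble into a commuting ladder from the $G$-sequence to the exact $(G \ltimes G/H)$-sequence, but a bare diagram chase is circular, since it only shows the $G$-sequence is exact in the middle if it already is. To genuinely close the argument one must use properness of $p$ a second time: either by producing an imprimitivity-type bimodule realizing $\A \rtimes_r G$ as a Rieffel-induced subquotient of $(\A \otimes_{C_0(\go)} C_0(G/H)) \rtimes_r G$ that respects $\I$, in the spirit of the Rieffel-induction computations of Section \ref{sec:transformation}; or by packaging the whole argument as the assertion that $G/H$ is a fiberwise-compact $G$-space carrying an amenable $G$-action, so that $G$ is amenable at infinity and hence exact (cf.\ \cite{claire02, bcl}), with properness of $p$ supplying exactly the fiberwise compactness that makes that implication go through.
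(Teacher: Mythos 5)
Your proposal is correct, and in fact your closing remark contains the paper's entire proof: the paper simply observes that properness of $p$ makes $G/H$ a fiberwise compact $G$-space, that $G \ltimes G/H$ is equivalent to $H$ (Theorem \ref{thm:wideequiv}) and hence amenable, so that $G$ is amenable at infinity and therefore exact by \cite[Proposition 6.7]{claireexact}. Since the hypothesis here is amenability of $H$ rather than mere exactness, this three-line argument is available and the crossed-product machinery you develop is not needed. That said, the longer route you sketch is essentially the proof of the paper's Theorem \ref{thm:exactext2}, which strengthens this proposition by replacing ``amenable'' with ``exact''; there the amenability-at-infinity shortcut is unavailable and one really does pass through the identification $p^*\A \rtimes_{\sigma,r}(G\ltimes X) \cong (\A\otimes_\go\Cgh)\rtimes_{\alpha\otimes\lt,r}G$ and the $G$-equivariant embedding $\iota\rtimes\id$. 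You are also right that the diagram chase is the delicate point: injectivity of the vertical maps alone does not give exactness of the top row at the middle term. The paper closes this gap not with an imprimitivity bimodule but with the direct intersection computation $\iota\rtimes\id(\A\rtimes_{\alpha,r}G)\cap(\I\otimes_\go\Cgh)\rtimes_{\alpha\otimes\lt,r}G=\iota\rtimes\id(\I\rtimes_{\alpha,r}G)$, verified pointwise on compactly supported sections via $f(\gamma)\otimes 1\in\I_{r(\gamma)}\omax\Cgh_{r(\gamma)}\Rightarrow f(\gamma)\in\I_{r(\gamma)}$. So both of the escape hatches you propose are sound; for the statement as given, the second (amenability at infinity) is the intended one, and the first becomes necessary only for the generalization.
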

\begin{proof}
	Since $p : G/H \to \go$ is proper, $G/H$ is a fiberwise compact $G$-space, in the sense of \cite{claireexact}. Moreover, the transformation groupoid $G \ltimes G/H$ is 
	equivalent to $H$, hence amenable. Thus $G$ acts amenably on a fiberwise compact space, so it is amenable at infinity. It follows from \cite[Proposition 6.7]{claireexact}
	that $G$ is exact.
\end{proof}

\section{A Partial Converse for Transformation Groupoids}
\label{sec:converse}
We can actually extend Proposition \ref{prop:exactext} considerably to obtain a partial converse to Theorem \ref{thm:exacttrans}. In the group case, Proposition \ref{prop:exactext} 
follows from the theorem in Section 7 of \cite{kw99-2}. This section is devoted to proving an analogue of that result for groupoids.

\begin{thm}
\label{thm:exactext2}
	Let $G$ be a locally compact Hausdorff groupoid, and suppose $X$ is a left $G$-space such that the structure map $p : X \to \go$ is proper and the transformation groupoid
	$G \ltimes X$ is exact. Then $G$ is exact.
\end{thm}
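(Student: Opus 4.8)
The plan is to adapt the argument behind the special case Proposition \ref{prop:exactext}, using the information that exactness provides about $G \ltimes X$ in place of the amenability hypothesis there. Recall that a groupoid of the kind considered here is exact precisely when it is amenable at infinity, by the work of Anantharaman-Delaroche in \cite{claireexact}. Since $G \ltimes X$ is exact, it is therefore amenable at infinity, so there is a fiberwise compact left $(G \ltimes X)$-space $Y$---meaning that the structure map $\pi \colon Y \to X$ is proper---on which $G \ltimes X$ acts amenably; equivalently, the transformation groupoid $(G \ltimes X) \ltimes Y$ is (measurewise) amenable.

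First I would promote $Y$ to a $G$-space. The structure map is taken to be $p \circ \pi \colon Y \to \go$, which is proper because $p$ and $\pi$ both are. Given $\gamma \in G$ and $y \in Y$ with $s(\gamma) = p(\pi(y))$, the pair $(\gamma, \gamma \cdot \pi(y))$ lies in $G \ltimes X$ and has source $\gamma^{-1} \cdot (\gamma \cdot \pi(y)) = \pi(y)$, so we may set $\gamma \cdot y = (\gamma, \gamma \cdot \pi(y)) \cdot y$. A routine verification shows this defines a continuous left $G$-action satisfying $\pi(\gamma \cdot y) = \gamma \cdot \pi(y)$; hence $Y$ is a fiberwise compact $G$-space.

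The crucial step is to identify $(G \ltimes X) \ltimes Y$ with $G \ltimes Y$ as topological groupoids. Recalling from Section \ref{sec:transformation} that the range map of $G \ltimes X$ is $(\gamma, x) \mapsto x$, an element of $(G \ltimes X) \ltimes Y$ is a pair $((\gamma, x), y)$ with $x = \pi(y)$, whence $r(\gamma) = p(x) = p(\pi(y))$ and so $(\gamma, y) \in G \ltimes Y$. I would check that $((\gamma, x), y) \mapsto (\gamma, y)$ is a homeomorphism---injectivity is automatic since $x = \pi(y)$ is forced, and the inverse is $(\gamma, y) \mapsto ((\gamma, \pi(y)), y)$---and that it respects ranges, sources, and products. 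It then follows that $G \ltimes Y$ is amenable, i.e.\ that $G$ acts amenably on the fiberwise compact space $Y$, so $G$ is amenable at infinity and therefore exact by \cite[Proposition 6.7]{claireexact}.

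The step I expect to be most delicate is this last identification: several transformation groupoids and their structure maps must be matched correctly against the left/right conventions fixed for $G \ltimes X$, and it is easy to slip there. It is worth remarking why the more direct route fails. One would like to pull an arbitrary $G$-dynamical system back along $p$ to a $(G \ltimes X)$-system (via Proposition \ref{prop:c0fiber} and Theorem \ref{thm:redcross}) and then descend the resulting exact sequence; but the pullback realizes the $G$-crossed product only as a non-invariant subalgebra of the crossed product over $X$, and the original sequence cannot be recovered from the larger one by a purely formal argument, so properness of $p$ must be exploited in the essential way above.
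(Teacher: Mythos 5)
There is a genuine gap at the very first step. Your argument hinges on the implication ``$G \ltimes X$ exact $\Rightarrow$ $G \ltimes X$ amenable at infinity,'' but for groupoids this implication is an open problem, not a theorem. What is known from \cite{claireexact} is the converse direction (amenability at infinity implies exactness, \cite[Proposition 6.7]{claireexact}); the equivalence you invoke is established only for groups (via \cite{bcl}) and for groupoids assumed to be weakly inner amenable, and there is no reason for $G \ltimes X$ to be weakly inner amenable here. Indeed, the paper explicitly flags exactly this as unknown, and the Question closing Section \ref{sec:converse} --- whether exactness of $G \ltimes X$ for a fiberwise compact $X$ forces $G$ to be amenable at infinity --- is precisely the question your argument would need answered affirmatively (for the groupoid $G \ltimes X$) to get off the ground. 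The remainder of your argument (promoting $Y$ to a $G$-space with proper structure map $p \circ \pi$ and identifying $(G \ltimes X) \ltimes Y$ with $G \ltimes Y$) is sound and is essentially the standard transitivity argument behind Proposition \ref{prop:exactext}, but without the first step the proof does not close.

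Ironically, the ``more direct route'' you dismiss in your final paragraph is the one the paper actually takes, and it does go through. One pulls the $G$-system $(\A, G, \alpha)$ back to the $(G \ltimes X)$-system $(p^*\A, G \ltimes X, \sigma)$, identifies $p^*\A \rtimes_{\sigma, r} (G \ltimes X)$ with $(\A \otimes_\go \Cgh) \rtimes_{\alpha \otimes \lt, r} G$ via Theorem \ref{thm:redcross}, and uses properness of $p$ exactly where you suspected it must enter: the fibers $\Cgh_u = C(p^{-1}(u))$ are unital, so $a \mapsto a \otimes 1$ gives a $G$-equivariant embedding $\iota$ of $\A$ into $\A \otimes_\go \Cgh$, which induces an injective homomorphism $\iota \rtimes \id$ of reduced crossed products (Proposition \ref{prop:embed}). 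The diagram chase is then completed not by a purely formal argument but by the concrete intersection identity
\[
	\iota \rtimes \id(\A \rtimes_{\alpha, r} G) \cap (\I \otimes_\go \Cgh) \rtimes_{\alpha \otimes \lt, r} G = \iota \rtimes \id(\I \rtimes_{\alpha, r} G),
\]
verified by checking that $f(\gamma) \otimes 1 \in \I_{r(\gamma)} \omax \Cgh_{r(\gamma)}$ forces $f(\gamma) \in \I_{r(\gamma)}$. If you want to salvage your approach, you would need to supply a proof of amenability at infinity for exact transformation groupoids of this type, which is currently out of reach; otherwise the crossed-product argument is the way to go.
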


The proof relies on the following setup. The results are largely the same as the content of \cite[Lemma 5.6]{claireexact}, though we present the details in a slightly different way. Let 
$(\A, G, \alpha)$ be a separable groupoid dynamical system with $A = \Gamma_0(\go, \A)$ exact. Recall that $C_0(X)$ is a $C_0(\go)$-algebra with fibers given by
\[
	C_0(X)(u) = C_0(p^{-1}(u)) = C(p^{-1}(u)),
\]
since $p^{-1}(u)$ is compact for all $u \in \go$. For brevity, we will let $\Cgh$ denote the associated upper semicontinuous $C^*$-bundle over $\go$. By \cite[Example 4.8]{mw08},
$G$ acts on $\Cgh$ via left translation: for each $\gamma \in G$, we have an isomorphism $\lt_\gamma : \Cgh_{s(\gamma)} \to \Cgh_{r(\gamma)}$ given by
\[
	\lt_\gamma(f)(x) = f(\gamma^{-1} \cdot x).
\]
It now follows from the discussion on page 919 of \cite{KMRW} and \cite[Proposition 6.9]{jonbrown} that $G$ acts on the balanced tensor product $A \otimes_{\go} C_0(X)$. 
More precisely, $G$ acts on the associated upper semicontinuous $C^*$-bundle $\A \otimes_\go \Cgh$, which has fibers
\[
	(\A \otimes_\go \Cgh)_u = \A_u \omax \Cgh_u \cong C(p^{-1}(u), \A_u).
\]
The action is just the tensor product of the original actions. That is, there are isomorphisms $\alpha_\gamma \otimes \lt_\gamma : (\A \otimes_\go \Cgh)_{s(\gamma)} \to 
(\A \otimes_\go \Cgh)_{r(\gamma)}$ for each $\gamma \in G$, which take the form
\[
	(\alpha_\gamma \otimes \lt_\gamma)(f)(x) = \alpha_\gamma \bigl( f(\gamma^{-1} \cdot x) \bigr)
\]
for $f \in C(p^{-1}(s(\gamma)), \A_{s(\gamma)})$. Thus $(\A \otimes_\go \Cgh, G, \alpha \otimes \lt)$ is a groupoid dynamical system.

On the other hand, we can form the pullback bundle $p^*\A \to X$, which admits an action of $G \ltimes X$ via
\[
	\sigma_{(\gamma, x)}(a) = \alpha_{\gamma}(a).
\]
It is straightforward to see that this defines a continuous action. For one, if $(\gamma, \eta) \in \gtwo$, then
\[
	\sigma_{(\gamma, x)(\eta, \gamma^{-1} \cdot x)}(a) = \sigma_{(\gamma \eta, x)}(a) = \alpha_{\gamma \eta} (a) = \alpha_\gamma \bigl( \alpha_\eta(a) \bigr)
		= \sigma_{(\gamma, x)} \bigl( \sigma_{(\eta, \gamma^{-1} \cdot x)}(a) \bigr).
\]
Now suppose $(\gamma_i, x_i) \to (\gamma, x)$ in $G \ltimes X$ and $a_i \to a$ in $p^*\A$ with $a_i \in p^*\A_{s(\gamma_i, x_i)}$ for all $i$. By viewing $p^*\A$
as a subset of $X \times \A$, we can write $(\gamma^{-1}_i \cdot x_i, a_i)$ and $(\gamma^{-1} \cdot x, a)$ in place of $a_i$ and $a$, respectively. It is then clear that $a_i \to a$
in $\A$, so $\alpha_{\gamma_i}(a_i) \to \alpha_{\gamma}(a)$ in $\A$ since $\alpha$ is a continuous action. It follows that $(x_i, \alpha_{\gamma_i}(a_i)) \to (x, \alpha_\gamma(a))$
in $p^*\A$, so $\sigma$ is continuous.

\begin{prop}
\label{prop:c0gh}
	There is an isomorphism 
	\[
		\Phi : p^*\A \rtimes_{\sigma, r} (G \ltimes X) \to (\A \otimes_\go \Cgh) \rtimes_{\alpha \otimes \lt, r} G, 
	\]
	which is characterized by
	\[
		\Phi(f)(\gamma)(x) = f(\gamma, x)
	\]
	for $f \in \Gamma_c(G \ltimes X, r^*(p^*\A))$.
\end{prop}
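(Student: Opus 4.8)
The strategy is to recognize this proposition as a direct application of the machinery developed in Section \ref{sec:transformation}, with the dynamical system $(p^*\A, G \ltimes X, \sigma)$ playing the role of $(\A, G \ltimes X, \alpha)$ there. First I would set $A' = \Gamma_0(X, p^*\A)$ and apply Proposition \ref{prop:c0fiber} with the structure map $p : X \to \go$ to make $A'$ into a $C_0(\go)$-algebra; the resulting upper semicontinuous $C^*$-bundle $\B \to \go$ has fibers $\B_u = \Gamma_0(p^{-1}(u), p^*\A)$. Since $p$ is proper, each $p^{-1}(u)$ is compact, and since $p^*\A$ restricts over $p^{-1}(u)$ to the constant bundle with fiber $\A_u$, we obtain $\B_u = C(p^{-1}(u), \A_u) \cong \A_u \omax \Cgh_u = (\A \otimes_\go \Cgh)_u$. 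Proposition \ref{prop:action} then equips $\B$ with a $G$-action $\beta$, and unwinding the definitions gives $\beta_\gamma(b)(x) = \sigma_{(\gamma, x)}\bigl(b(\gamma^{-1} \cdot x)\bigr) = \alpha_\gamma\bigl(b(\gamma^{-1} \cdot x)\bigr)$, which is exactly the formula for $\alpha_\gamma \otimes \lt_\gamma$ under the identification above.

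Second, I would promote this fiberwise picture to an isomorphism of dynamical systems $(\B, G, \beta) \cong (\A \otimes_\go \Cgh, G, \alpha \otimes \lt)$. The section algebra $B = \Gamma_0(\go, \B)$ is, by construction, just $A'$ viewed through its $C_0(\go)$-structure; on the other hand, the balanced tensor product $A \otimes_\go C_0(X)$ is the section algebra of $\A \otimes_\go \Cgh$, and the standard identification $\Gamma_0(X, p^*\A) \cong A \otimes_\go C_0(X)$ (a standard fact about pullbacks; cf. \cite[Proposition 6.9]{jonbrown} and the discussion on page 919 of \cite{KMRW}) is $C_0(\go)$-linear. One then checks on elementary tensors $a \otimes f \mapsto \bigl(x \mapsto f(x) a(p(x))\bigr)$ that this identification intertwines $\alpha \otimes \lt$ with $\beta$, using the fiberwise computation from the previous step. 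Hence the two dynamical systems are isomorphic, and the resulting map induces an isomorphism of reduced crossed products $\B \rtimes_{\beta, r} G \cong (\A \otimes_\go \Cgh) \rtimes_{\alpha \otimes \lt, r} G$ that acts as the identity on sections at the level of $\Gamma_c$.

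Finally, I would invoke Theorem \ref{thm:redcross} directly: applied to $(p^*\A, G \ltimes X, \sigma)$ it yields an isomorphism $\Phi_0 : p^*\A \rtimes_{\sigma, r}(G \ltimes X) \to \B \rtimes_{\beta, r} G$ given on $\Gamma_c(G \ltimes X, r^*(p^*\A))$ by $\Phi_0(f)(\gamma)(x) = f(\gamma, x)$. Composing $\Phi_0$ with the isomorphism from the previous paragraph produces the desired $\Phi$, and since the latter acts as the identity on compactly supported sections, the composite is still characterized by $\Phi(f)(\gamma)(x) = f(\gamma, x)$.

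I expect the main obstacle to be the bookkeeping in the second step: verifying that the abstract identification $\Gamma_0(X, p^*\A) \cong A \otimes_\go C_0(X)$ simultaneously respects the $C_0(\go)$-structure coming from Proposition \ref{prop:c0fiber} \emph{and} carries $\alpha \otimes \lt$ to $\beta$, so that it is genuinely an isomorphism of $G$-dynamical systems. Everything else---properness forcing the fibers to be compact, the fiberwise formula for $\beta$, and the form of $\Phi$---is routine once the results of Section \ref{sec:transformation} are in hand.
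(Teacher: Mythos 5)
Your proposal is correct and follows essentially the same route as the paper: identify $p^*A = \Gamma_0(X, p^*\A)$ with $A \otimes_\go C_0(X)$ as a $C_0(\go)$-algebra via Proposition \ref{prop:c0fiber}, check that the resulting bundle $\B$ and action $\beta$ from Proposition \ref{prop:action} coincide with $\A \otimes_\go \Cgh$ and $\alpha \otimes \lt$ (using the fiberwise computation $(\alpha \otimes \lt)_\gamma(f)(x) = \alpha_\gamma(f(\gamma^{-1}\cdot x)) = \sigma_{(\gamma,x)}(f(\gamma^{-1}\cdot x))$), and then invoke Theorem \ref{thm:redcross}. The paper simply treats the identification of the two dynamical systems as an equality rather than spelling out, as you do, that it must be checked to be $C_0(\go)$-linear and equivariant; your extra care there is harmless and arguably an improvement.
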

\begin{proof}
	We really just need to show that our dynamical systems fall under the purview of Theorems \ref{thm:fullcross} and \ref{thm:redcross}. Notice first that the pullback algebra
	can be written as $p^*A = A \otimes_\go C_0(X)$, which is precisely the section algebra of the bundle $\A \otimes_\go \Cgh \to \go$. Viewing $p^*A$ as a $C_0(\go)$-algebra,
	the fibers are precisely
	\[
		p^*\A_{u} = \Gamma_0(p^{-1}(u), p^*\A) = C(p^{-1}(u), \A_u) = (\A \otimes_\go \Cgh)_u.
	\]
	Thus the bundle $\A \otimes_\go \Cgh \to \go$ is exactly the bundle $\B$ from Theorem \ref{thm:fullcross}. The action $\alpha \otimes \lt$ is exactly what we expect, too:
	for $f \in C(p^{-1}(s(\gamma)), \A_{s(\gamma)})$,
	\[
		(\alpha \otimes \lt)_\gamma(f)(x) = \alpha_\gamma \bigl( f(\gamma^{-1} \cdot x) \bigr) = \sigma_{(\gamma, x)} \bigl( f(\gamma^{-1} \cdot x) \bigr).
	\]
	Therefore, the existence of the desired isomorphism follows as a special case of Theorem \ref{thm:redcross}.
\end{proof}

Notice that if $A$ is exact, then $p^*A = A \otimes_\go C_0(X)$ is exact. Therefore, if the transformation groupoid $G \ltimes X$ is also assumed to be exact, then the 
reduced crossed product $p^*\A \rtimes_{\sigma, r} (G \ltimes X)$ is exact by \cite[Theorem 6.14]{lalonde2014}. It then follows from the isomorphism of Proposition 
\ref{prop:c0gh} that $(\A \otimes_\go \Cgh) \rtimes_{\alpha \otimes \lt, r} G$ is an exact $C^*$-algebra.

The next step is to show that $\A \rtimes_{\alpha, r} G$ embeds into $(\A \otimes_\go \Cgh) \rtimes_{\alpha \otimes \lt, r} G$. Since the fibers of $p : X \to \go$ are compact,
each fiber $\Cgh_u = C(p^{-1}(u))$ is unital. Thus we have fiberwise embeddings $\A_u \hookrightarrow \A_u \omax \Cgh_u$ given by $a \mapsto a \otimes 1$. Indeed, we claim that
these homomorphisms yield a continuous $C^*$-bundle homomorphism $\hat{\iota} : \A \hookrightarrow \A \otimes_\go \Cgh$. To prove it, we will show that there
is a $C_0(\go)$-linear embedding $\iota : A \to A \otimes_\go C_0(X)$.

\begin{prop}
	There is an injective $C_0(\go)$-linear homomorphism $\iota : A \to A \otimes_\go C_0(X)$ characterized by
	\[
		\iota(a)(u) = a(u) \otimes 1
	\]
	for all $u \in \go$. Furthermore, $\iota$ is $G$-equivariant.
\end{prop}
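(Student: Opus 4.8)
The plan is to realise $\iota$ concretely as a pullback-of-sections map. Using the identification $A \otimes_\go C_0(X) = p^*A = \Gamma_0(X, p^*\A)$ (an instance of Proposition~\ref{prop:c0fiber} applied to the map $p \colon X \to \go$), I would define $\iota(a) = p^*a$; that is, $\iota(a)(x) = a(p(x)) \in \A_{p(x)} = (p^*\A)_x$ for $a \in A$ and $x \in X$. The first task is to confirm that $\iota(a)$ genuinely lies in $p^*A$. Continuity of $\iota(a)$ as a section of $p^*\A$ is immediate, since it is the composition of the continuous section $a \colon \go \to \A$ with the continuous map $p$. It vanishes at infinity because, for every $\varepsilon > 0$,
\[
	\bigl\{ x \in X : \norm{\iota(a)(x)} \geq \varepsilon \bigr\} = p^{-1}\bigl( \bigl\{ u \in \go : \norm{a(u)} \geq \varepsilon \bigr\} \bigr),
\]
which is compact since $p$ is proper and $a$ vanishes at infinity. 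To match this with the description in the statement, observe that restricting $\iota(a)$ to the fibre $p^{-1}(u)$ gives the constant function with value $a(u)$; under the identifications $(\A \otimes_\go \Cgh)_u = \A_u \omax \Cgh_u = C(p^{-1}(u), \A_u)$ (valid since $\Cgh_u = C(p^{-1}(u))$ is unital, as $p^{-1}(u)$ is compact and nonempty) this constant function is exactly $a(u) \otimes 1$.

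The algebraic properties then follow easily. Since the operations on $\Gamma_0(X, p^*\A)$ are computed fibrewise and each map $b \mapsto b \otimes 1$ from $\A_u$ to $\A_u \omax \Cgh_u$ is a $*$-homomorphism, $\iota$ is a $*$-homomorphism. It is injective because $p$ is surjective: if $\iota(a) = 0$, then $a(p(x)) = 0$ for all $x \in X$, hence $a(u) = 0$ for every $u \in \go$, so $a = 0$. For $C_0(\go)$-linearity, recall that the $C_0(\go)$-action on $p^*A$ provided by Proposition~\ref{prop:c0fiber} is $(\varphi \cdot z)(x) = \varphi(p(x)) z(x)$, so that
\[
	\iota(\varphi \cdot a)(x) = \varphi(p(x))\, a(p(x)) = \varphi(p(x))\, \iota(a)(x) = \bigl( \varphi \cdot \iota(a) \bigr)(x)
\]
for all $\varphi \in C_0(\go)$, $a \in A$, and $x \in X$.

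Finally I would verify $G$-equivariance, namely that the fibrewise embeddings $\hat{\iota}_u \colon \A_u \to (\A \otimes_\go \Cgh)_u$, $b \mapsto b \otimes 1$, satisfy $(\alpha_\gamma \otimes \lt_\gamma) \circ \hat{\iota}_{s(\gamma)} = \hat{\iota}_{r(\gamma)} \circ \alpha_\gamma$ for every $\gamma \in G$. Writing $b \otimes 1 \in \A_{s(\gamma)} \omax \Cgh_{s(\gamma)}$ as the constant function $b$ on $p^{-1}(s(\gamma))$, the explicit formula for $\alpha_\gamma \otimes \lt_\gamma$ gives, for $x \in p^{-1}(r(\gamma))$,
\[
	(\alpha_\gamma \otimes \lt_\gamma)(b \otimes 1)(x) = \alpha_\gamma\bigl( (b \otimes 1)(\gamma^{-1} \cdot x) \bigr) = \alpha_\gamma(b),
\]
which is the constant function with value $\alpha_\gamma(b)$, i.e.\ $\alpha_\gamma(b) \otimes 1 = \hat{\iota}_{r(\gamma)}(\alpha_\gamma(b))$. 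None of these steps is genuinely hard; the only one that really uses the standing hypotheses is checking that $\iota(a)$ lands in $\Gamma_0(X, p^*\A)$ rather than merely in the bounded sections, which is precisely where properness of $p$ enters. The bulk of the remaining work is the bookkeeping needed to keep the pullback-section and the $(\cdot \otimes 1)$ descriptions of $\iota$ aligned.
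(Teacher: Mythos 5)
Your proposal is correct and follows essentially the same route as the paper: both realise $\iota$ as the pullback-of-sections map $a \mapsto p^*a$ into $\Gamma_0(X, p^*\A) \cong A \otimes_\go C_0(X)$, use properness of $p$ to get that $p^*a$ vanishes at infinity, identify the fibrewise restriction with $a(u) \otimes 1$, and verify injectivity, $C_0(\go)$-linearity, and $G$-equivariance by the same computations. No gaps.
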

\begin{proof}
	To construct $\iota$, we will first show that there is an embedding of $A$ into the pullback algebra $p^*A = \Gamma_0(X, p^*\A)$ and then compose with the natural
	isomorphism $p^*A \cong A \otimes_\go C_0(X)$. We need to appeal to \cite[Proposition 3.4.4]{lalondethesis}. Given $a \in A$, define a map $\tilde{f} : X \to \A$ by
	\[
		\tilde{f}(x) = a(p(x)).
	\]
	Then $\tilde{f}$ is clearly continuous: if $x_i \to x$ in $X$, then $p(x_i) \to p(x)$ in $\go$, so $a(p(x_i)) \to a(p(x))$ since $a \in \Gamma_0(\go, \A)$. Moreover, it is 
	straightforward to show that $\tilde{f}$ vanishes at infinity. Let $\varepsilon > 0$ be given, and let $K = \{ u \in \go : \norm{a(u)} \geq \varepsilon \}$. By definition we have
	$\| \tilde{f}(x) \| = \norm{a(p(x))}$ for all $x \in X$, so
	\[
		\{ x \in X : \| \tilde{f}(x) \| \geq \varepsilon \} = p^{-1}(K)
	\]
	is compact, since we have assumed that $p : X \to \go$ is a proper map. Therefore, the section $p^*a$ of $p^*\A \to X$ defined by
	\begin{equation}
	\label{eq:pstar}
		p^*a(x) = (x, \tilde{f}(x)) = (x, a(p(x)))
	\end{equation}
	belongs to $\Gamma_0(X, p^*\A)$ by \cite[Proposition 3.4.4]{lalondethesis}.
	
	Now we check that the map $a \mapsto p^*a$ is an injective homomorphism. Well, it is immediate from \eqref{eq:pstar} that the map is a homomorphism. If $p^*a = 0$ for some
	$a \in A$, then we have $a(p(x)) = 0$ for all $x \in X$. Since $p$ is surjective, this means $a = 0$, and the map is injective. If we let $\Theta : p^*A \to A \otimes_\go C_0(X)$
	denote the natural isomorphism afforded by Proposition \ref{prop:c0fiber}, then we have
	\[
		\Theta(p^*a)(u)(x) = p^*a(x) = a(p(x)) = a(u)
	\]
	for all $u \in \go$ and $x \in p^{-1}(u)$. That is, $\Theta(p^*a)(u)$ (viewed as an element of $C(p^{-1}(u), \A_u)$) is the constant function $x \mapsto a(u)$, which we identify
	with the elementary tensor $a(u) \otimes 1$. Thus our embedding $\iota : A \to A \otimes_\go C_0(X)$ takes the desired form:
	\[
		\iota(a)(u) = \Theta(p^*a)(u) = a(u) \otimes 1
	\]
	for all $u \in \go$.
	
	It remains to see that $\iota$ is $C_0(\go)$-linear. Let $a \in A$ and $\varphi \in C_0(\go)$. Then for all $u \in \go$, we have
	\begin{align*}
		\iota(\varphi \cdot a)(u) &= (\varphi \cdot a)(u) \otimes 1 \\
			&= \varphi(u) a(u) \otimes 1 \\
			&= \varphi(u) \bigl( a(u) \otimes 1 \bigr) \\
			&= \varphi(u) \iota(a)(u) \\
			&= \bigl( \varphi \cdot \iota(a) \bigr)(u),
	\end{align*}
	so $\iota(\varphi \cdot a) = \varphi \cdot \iota(a)$. Thus $\iota$ is $C_0(\go)$-linear. Consequently, it induces fiberwise homomorphisms $\iota_u : \A_u \to \A_u \omax \Cgh_u$,
	which are given by $\iota_u(a) = a \otimes 1$.
	
	Finally, we check that $\iota$ is $G$-equivariant. This amounts to verifying that for all $a \in A$ and all $\gamma \in G$,
	\[
		\iota_{r(\gamma)} \bigl( \alpha_\gamma(a) \bigr) = (\alpha \otimes \lt)_\gamma \bigl( \iota_{s(\gamma)}(a) \bigr).
	\]
	On the left hand side we have
	\[
		\iota_{r(\gamma)} \bigl(\alpha_\gamma(a) \bigr) = \alpha_\gamma(a) \otimes 1,
	\]
	while on the right,
	\begin{align*}
		(\alpha \otimes \lt)_\gamma \bigl( \iota_{s(\gamma)}(a) \bigr) &= (\alpha \otimes \lt)_\gamma \bigl( a \otimes 1 \bigr) \\
			&= \alpha_\gamma (a) \otimes \lt_\gamma(1) \\
			&= \alpha_\gamma(a) \otimes 1 \\
			&= \iota_{r(\gamma)} \bigl( \alpha_\gamma(a) \bigr).
	\end{align*}
	Thus $\iota$ is $G$-equivariant.
\end{proof}

Since $\iota : A \to A \otimes_\go C_0(X)$ is $G$-equivariant, \cite[Proposition 6.3]{lalonde2014} guarantees that there is a homomorphism
$\iota \rtimes \id : \Gamma_c(G, r^*\A) \to \Gamma_c(G, r^*(\A \otimes_\go \Cgh))$ characterized by
\[
	\iota \rtimes \id(f)(\gamma) = \iota_{r(\gamma)}(f(\gamma)) = f(\gamma) \otimes 1.
\]
Moreover, $\iota \rtimes \id$ extends to a homomorphism between the associated full crossed products. Proposition 6.10 of \cite{lalonde2014} then implies that there is also 
a homomorphism $\iota \rtimes \id : \A \rtimes_{\alpha, r} G \to (\A \otimes_\go \Cgh) \rtimes_{\alpha \otimes \id, r} G$ at the level of the reduced crossed products. All that 
remains is to prove that $\iota \rtimes \id$ is injective.

\begin{prop}
\label{prop:embed}
	There is an injective homomorphism  
	\[
		\iota \rtimes \id : \A \rtimes_{\alpha, r} G \to (\A \otimes_\go \Cgh) \rtimes_{\alpha \otimes \lt, r} G,
	\]
	which is characterized by
	\[
		\iota \rtimes \id(f)(\gamma) = f(\gamma) \otimes 1,
	\]
	for $f \in \Gamma_c(G, r^*\A)$.
\end{prop}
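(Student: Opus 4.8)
The plan is to exhibit a faithful regular representation of $\A \rtimes_{\alpha, r} G$ as a subrepresentation of the composition of $\iota \rtimes \id$ with a regular representation of $(\A \otimes_\go \Cgh) \rtimes_{\alpha \otimes \lt, r} G$; injectivity then drops out. To begin, fix a faithful nondegenerate representation $\rho : B \to B(\Hil)$ of $B = A \otimes_\go C_0(X)$. Since $\iota : A \to B$ is an injective $*$-homomorphism it is isometric, so $\rho \circ \iota$ is a faithful representation of $A$, and therefore the regular representation $\Ind(\rho \circ \iota)$ is, by the definition of the reduced norm, faithful on $\A \rtimes_{\alpha, r} G$. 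Let $\Zme$ be the Hilbert $A$-module completion of $\Gamma_c(G, s^*\A)$, and let $\Y$ be the Hilbert $B$-module completion of $\Gamma_c(G, s^*\B)$ with $\B = \A \otimes_\go \Cgh$, so that $\Ind(\rho \circ \iota)$ acts on $\Zme \otimes_A \Hil$ and $\Ind \rho$ acts on $\Y \otimes_B \Hil$.

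Next I would introduce the map $z \mapsto \tilde{z}$ from $\Gamma_c(G, s^*\A)$ to $\Gamma_c(G, s^*\B)$ given fiberwise by $\tilde{z}(\gamma) = \iota_{s(\gamma)}(z(\gamma)) = z(\gamma) \otimes 1$; this lands in $\Gamma_c(G, s^*\B)$ because $\iota$ induces a continuous $C^*$-bundle morphism $\A \hookrightarrow \B$, just as in the discussion preceding the proposition, and composition with a continuous compactly supported section of $s^*\A$ produces one of $s^*\B$. A short computation with $(w(\gamma) \otimes 1)^*(z(\gamma) \otimes 1) = w(\gamma)^* z(\gamma) \otimes 1$ then gives $\hip{\tilde{w}}{\tilde{z}}_B = \iota\bigl(\hip{w}{z}_A\bigr)$. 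Hence the rule $W(z \otimes h) = \tilde{z} \otimes h$ on elementary tensors is well defined and isometric from $\Gamma_c(G, s^*\A) \odot \Hil$ into $\Y \otimes_B \Hil$, since
$\ip{W(z \otimes h)}{W(w \otimes k)} = \ip{\rho\bigl(\iota(\hip{w}{z}_A)\bigr) h}{k} = \ip{(\rho \circ \iota)(\hip{w}{z}_A) h}{k} = \ip{z \otimes h}{w \otimes k}$;
so $W$ extends to an isometry $W : \Zme \otimes_A \Hil \to \Y \otimes_B \Hil$.

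The crucial step is the verification that $W$ intertwines $\Ind(\rho \circ \iota)$ with $\Ind \rho \circ (\iota \rtimes \id)$. This comes down to the module-level identity $(\iota \rtimes \id)(f) \cdot \tilde{z} = \widetilde{f \cdot z}$ for $f \in \Gamma_c(G, r^*\A)$ and $z \in \Gamma_c(G, s^*\A)$, which follows directly from the formula for the module action once one observes that $(\alpha \otimes \lt)_\gamma^{-1}(f(\eta) \otimes 1) = \alpha_\gamma^{-1}(f(\eta)) \otimes \lt_\gamma^{-1}(1) = \alpha_\gamma^{-1}(f(\eta)) \otimes 1$; in other words, the point that makes everything work is that left translation fixes the unit $1$ of each fiber $\Cgh_u = C(p^{-1}(u))$. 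Granting this, $\Ind \rho\bigl((\iota \rtimes \id)(f)\bigr) W(z \otimes h) = \widetilde{f \cdot z} \otimes h = W(f \cdot z \otimes h) = W\, \Ind(\rho \circ \iota)(f)(z \otimes h)$, and the intertwining relation extends from $\Gamma_c(G, r^*\A)$ to all of $\A \rtimes_{\alpha, r} G$ by continuity.

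Injectivity is then immediate. If $(\iota \rtimes \id)(a) = 0$ for some $a \in \A \rtimes_{\alpha, r} G$, then $\Ind \rho\bigl((\iota \rtimes \id)(a)\bigr) = 0$, so $W\, \Ind(\rho \circ \iota)(a) = 0$ on $\Zme \otimes_A \Hil$; since $W$ is isometric this forces $\Ind(\rho \circ \iota)(a) = 0$, and faithfulness of $\Ind(\rho \circ \iota)$ gives $a = 0$. I expect the only real work to be bookkeeping — confirming that $z \mapsto \tilde{z}$ really maps $\Gamma_c(G, s^*\A)$ into $\Gamma_c(G, s^*\B)$ and that $W$ is correctly matched against the inner-product conventions in force — together with the conceptually central but computationally trivial fact that the left-translation action fixes the units of the fibers of $\Cgh$.
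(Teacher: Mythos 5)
Your argument is correct and is essentially the paper's proof with the details filled in: the paper checks injectivity on $\Gamma_c(G, r^*\A)$ and then defers the key fact that $\iota \rtimes \id$ is isometric for the reduced norms to a cited argument, and that argument is precisely the intertwining-isometry construction you carry out (compare the unitary $U$ built before Theorem \ref{thm:redcross}). Your two pivotal observations --- that $\hip{\tilde{w}}{\tilde{z}}_B = \iota(\hip{w}{z}_A)$ so that $W$ is isometric, and that $\lt_\gamma^{-1}(1) = 1$ so that $(\iota \rtimes \id)(f) \cdot \tilde{z} = \widetilde{f \cdot z}$ --- together with faithfulness of $\Ind(\rho \circ \iota)$ on the reduced crossed product are exactly what is needed, so the proof stands.
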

\begin{proof}
	We first need to check that $\iota \rtimes \id : \Gamma_c(G, r^*\A) \to \Gamma_c(G, r^*(\A \otimes_\go \Cgh))$ is injective. Suppose $\iota \rtimes \id(f) = 0$ for some 
	$f \in \Gamma_c(G, r^*\A)$. Then for all $\gamma \in G$,
	\[
		\iota \rtimes \id(f)(\gamma) = \iota_{r(\gamma)}(f(\gamma)) = 0, 
	\]
	and since $\iota_{r(\gamma)}$ is injective, $f(\gamma) = 0$. Thus $f = 0$, and $\iota \rtimes \id$ is injective on $\Gamma_c(G, r^*\A)$. Now the argument following the
	proof of \cite[Proposition 6.10]{lalonde2014} shows that
	\[
		\norm{\iota \rtimes \id(f)}_r = \norm{f}_r
	\]
	for all $f \in \Gamma_c(G, r^*\A)$. Hence $\iota \rtimes \id$ is isometric, so it extends to an injective homomorphism $\iota \rtimes \id : \A \rtimes_{\alpha, r} G \to 
	(\A \otimes_\go \Cgh) \rtimes_{\alpha \otimes \lt, r} G$. 
\end{proof}

\begin{proof}[Proof of Theorem \ref{thm:exactext2}]
	Let $(\A, G, \alpha)$ be a separable groupoid dynamical system and let $I \subseteq A$ be a $G$-invariant ideal. If we form the pullback dynamical system 
	$(p^*\A, G \ltimes X, \sigma)$, then the pullback $C^*$-algebra $p^*I$ is a $(G \ltimes X)$-invariant ideal of $p^*A$ by \cite[Proposition 3.2]{lalondeexact}. Therefore, we 
	obtain a short exact sequence
	\[
		0 \to p^*\I \rtimes_{\sigma, r} (G \ltimes X) \to p^*\A \rtimes_{\sigma, r} (G \ltimes X) \to p^*(\A/\I) \rtimes_{\sigma, r} (G \ltimes X) \to 0
	\]
	since $G \ltimes X$ is exact. By Proposition \ref{prop:embed}, we have embeddings of $\I \rtimes_{\alpha, r} G$, $\A \rtimes_{\alpha, r} G$, and $\A/\I \rtimes_{\alpha, r} G$
	into the respective pullback crossed products. This yields a diagram
	\[
		\xymatrix@C=12pt{
			0 \ar[r] & \I \rtimes_{\alpha, r} G \ar[r] \ar[d] & \A \rtimes_{\alpha, r} G \ar[r] \ar[d] & \A/\I \rtimes_{\alpha, r} G \ar[r] \ar[d] & 0 \\
			0 \ar[r] & p^*\I \rtimes_{\sigma, r} (G \ltimes X) \ar[r] & p^*\A \rtimes_{\sigma, r} (G \ltimes X) \ar[r] & p^*(\A/\I) \rtimes_{\sigma, r} (G \ltimes X) \ar[r] & 0
		}
	\]
	which is easily seen to commute. If $a$ belongs to the kernel of the map $\A \rtimes_{\alpha, r} G \to \A/\I \rtimes_{\alpha, r} G$, then $\iota \rtimes \id(a)$ is in the kernel
	of $p^*\A \rtimes_{\sigma, r} (G \ltimes X) \to p^*(\A/\I) \rtimes_{\sigma, r} (G \ltimes X)$. Hence
	\[
		\iota \rtimes \id (a) \in \iota \rtimes \id( \A \rtimes_{\alpha, r} G ) \cap p^*\I \rtimes_{\sigma, r} (G \ltimes X).
	\]
	We claim that $\iota \rtimes \id(a) \in \iota \rtimes \id(\I \rtimes_{\alpha, r} G)$. It will suffice to show that 
	\[
		\iota \rtimes \id(\A \rtimes_{\alpha, r} G) \cap (\I \otimes_\go \Cgh) \rtimes_{\alpha \otimes \lt, r} G = \iota \rtimes \id(\I \rtimes_{\alpha, r} G).
	\]
	Suppose first that $f \in \Gamma_c(G, r^*\A)$ and $\iota \rtimes \id(f) \in (\I \otimes_\go \Cgh) \rtimes_{\alpha \otimes \lt, r} G$. Then we have 
	\[
		\iota \rtimes \id(f)(\gamma) = f(\gamma) \otimes 1 \in \I_{r(\gamma)} \omax \Cgh_{r(\gamma)}
	\]
	for all $\gamma \in G$, meaning that $f(\gamma) \in \I_{r(\gamma)}$ for all $\gamma \in G$. That is, $f \in \Gamma_c(G, r^*\I)$. It follows that 
	\[
		\iota \rtimes \id(\A \rtimes_{\alpha, r} G) \cap (\I \otimes_\go \Cgh) \rtimes_{\alpha \otimes \lt, r} G \subseteq \iota \rtimes \id(\I \rtimes_{\alpha, r} G).
	\]
	The other containment is clear, so we have the desired equality. Therefore, $a \in \I \rtimes_{\alpha, r} G$, and the top row of the diagram above is exact. Thus $G$ is exact.
\end{proof}

\begin{cor}
\label{cor:exactsub}
	Let $G$ be a locally compact Hausdorff groupoid, and suppose $H \subseteq G$ is a closed, wide, exact subgroupoid such that the map 
	$p : G/H \to \go$ is proper. Then $G$ is exact.
\end{cor}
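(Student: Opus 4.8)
The plan is to obtain this as an immediate combination of Theorem~\ref{thm:wideequiv}, the equivalence-invariance of exactness from \cite[Theorem 4.8]{lalondeexact}, and the partial converse Theorem~\ref{thm:exactext2}. In effect it is the proof of Proposition~\ref{prop:exactext} with ``amenable'' weakened to ``exact'' and the appeal to \cite[Proposition 6.7]{claireexact} replaced by Theorem~\ref{thm:exactext2}.

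First I would set $X = G/H$. Since $H$ is exact it carries a Haar system, hence its range and source maps are open; as $H$ is also closed and wide, it acts freely and properly on the right of $G$, so $X = G/H$ is a locally compact Hausdorff left $G$-space whose structure map is $p([\gamma]) = r(\gamma)$. Theorem~\ref{thm:wideequiv} then says that $G$ is a $(G \ltimes G/H, H)$-equivalence, and since exactness passes across groupoid equivalences by \cite[Theorem 4.8]{lalondeexact}, the transformation groupoid $G \ltimes G/H$ is exact because $H$ is. The map $p : G/H \to \go$ is proper by hypothesis, so the pair $(G, G/H)$ meets the hypotheses of Theorem~\ref{thm:exactext2}, and we conclude that $G$ is exact.

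There is no substantive obstacle here; the only points needing a moment's care are the verification that $H$ has open range and source maps so that Theorem~\ref{thm:wideequiv} applies (which follows from the existence of a Haar system on any exact groupoid) and that $G/H$ is a genuine locally compact Hausdorff $G$-space (which follows from $H$ being closed and wide with open range and source maps). It is perhaps worth remarking that Proposition~\ref{prop:exactext} is recovered as the special case in which $H$ is amenable, since then $G \ltimes G/H$ is amenable and hence exact.
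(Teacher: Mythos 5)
Your proof is correct and follows the same route as the paper: the exactness of $H$ transfers to $G \ltimes G/H$ via the equivalence of Theorem~\ref{thm:wideequiv} and \cite[Theorem 4.8]{lalondeexact}, and then Theorem~\ref{thm:exactext2} applies because $p : G/H \to \go$ is proper. The extra care you take in checking that $H$ has open range and source maps (so that Theorem~\ref{thm:wideequiv} applies) is a reasonable addition that the paper leaves implicit.
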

\begin{proof}
	Since $H$ is exact, the transformation groupoid $G \ltimes G/H$ is exact. Since the map $p : G/H \to \go$ is proper, it follows from Theorem \ref{thm:exactext2} that 
	$G$ is exact.
\end{proof}

\begin{rem}
By taking $H = G$, we can see that the converse of Corollary \ref{cor:exactsub} holds as well. In this case we have $G/H \cong \go$, so the map $p : G/H \to \go$ is certainly proper,
and $G \ltimes G/H \cong G$. Thus $G$ is exact if and only if $G \ltimes G/H$ is.
\end{rem}

The setup of Theorem \ref{thm:exactext2} is reminiscent of the notion of amenability at infinity for groupoids. Recall that a groupoid $G$ is \emph{amenable
at infinity} if there is a $G$-space $X$ such that $p: X \to \go$ is proper and $G \ltimes X$ is an amenable groupoid. It is known that if $G$ is amenable at infinity, then $G$
is exact \cite[Proposition 6.7]{claireexact}. The converse is known to hold if $G$ is assumed to be weakly inner amenable, as defined by Anantharaman-Delaroche in 
\cite[Definition 4.2]{claireexact}. It is unknown whether exactness implies amenability at infinity in general. (The two properties are now known to be equivalent for groups by
\cite{bcl}, however.) Thanks to Theorem \ref{thm:exactext2}, we have another way of phrasing this open question.

\begin{ques}
	If a groupoid $G$ acts on a fiberwise compact space $X \to \go$ such that $G \ltimes X$ is an exact groupoid, must $G$ act amenably on some fiberwise compact space?
\end{ques}

\section{Inner Exactness}
\label{sec:inner}

We now conclude with a brief discussion inner exactness, as introduced by Anantharaman-Delaroche in \cite{claireweak}. A locally compact groupoid $G$ is said to be
\emph{inner exact} if the sequence
\[
	0 \to C_r^*(G \vert_U) \to C_r^*(G) \to C_r^*(G \vert_{\go \backslash U}) \to 0
\]
is exact for any open invariant set $U \subseteq \go$. It is clear that any exact groupoid is inner exact. However, this condition is strictly weaker than exactness, since any 
locally compact group is automatically inner exact. The importance of inner exactness became clear long before it was formally defined in \cite{claireweak}, since many of the 
known examples of non-exact groupoids actually fail to be inner exact. In particular, the non-exact HLS groupoids constructed by Higson, Lafforgue, and Skandalis in \cite{hls} 
as counterexamples to the Baum-Connes conjecture are not inner exact.

It is natural to ask which of the known permanence properties for exact groupoids translate over to inner exact groupoids. We begin our discussion with one such result, which is a 
version of \cite[Theorem 4.8]{lalondeexact} for inner exact groupoids.

\begin{thm}
\label{thm:equivinner}
	Let $G$ and $H$ be groupoids, and suppose $Z$ is a $(G,H)$-equivalence. If $G$ is inner exact, then so is $H$.
\end{thm}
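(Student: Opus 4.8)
The plan is to exploit the Morita equivalence of reduced $C^*$-algebras that accompanies a groupoid equivalence, together with the fact that this Morita equivalence is compatible with reductions to invariant subsets. By \cite[Theorem 4.1]{sims-williams2012}, the space $Z$ yields a $C_r^*(G)$--$C_r^*(H)$ imprimitivity bimodule $\mathcal{X}$, built by completing $C_c(Z)$. The heart of the matter is to show that, under the Rieffel correspondence induced by $\mathcal{X}$, the ideal $C_r^*(H\vert_U)\subseteq C_r^*(H)$ attached to an open invariant set $U\subseteq\ho$ is matched with the ideal $C_r^*(G\vert_V)\subseteq C_r^*(G)$ for the corresponding open invariant set $V\subseteq\go$, and that the quotient bimodule of $\mathcal{X}$ by this ideal is the imprimitivity bimodule of the complementary (closed-invariant) reduction. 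Granting this, inner exactness of $G$ forces the canonical surjection $C_r^*(G)/C_r^*(G\vert_V)\to C_r^*(G\vert_{\go\backslash V})$ to be an isomorphism, and a surjection of imprimitivity bimodules that is an isomorphism on one coefficient algebra must be an isomorphism on the other; hence $C_r^*(H)/C_r^*(H\vert_U)\to C_r^*(H\vert_{\ho\backslash U})$ is also an isomorphism, which is precisely inner exactness of $H$ at $U$.

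The steps, in order: (1) Use that the structure maps $\rho\colon Z\to\go$ and $\sigma\colon Z\to\ho$ are equivariant for the $G$- and $H$-actions and identify $\go$ and $\ho$ with $Z/H$ and $G\backslash Z$; this makes $\go/G$, $Z/(G\times H)$ and $\ho/H$ homeomorphic, so open invariant subsets of $\go$ and of $\ho$ are in bijection. Fix an open invariant $U\subseteq\ho$, set $V=\rho\bigl(\sigma^{-1}(U)\bigr)$, an open invariant subset of $\go$, and note that since the $H$-orbits in $Z$ are exactly the fibers of $\rho$ we have $Z_U:=\sigma^{-1}(U)=\rho^{-1}(V)$. (2) Check that $Z_U$ is a $(G\vert_V,H\vert_U)$-equivalence and that the closed set $Z\setminus Z_U=\sigma^{-1}(\ho\backslash U)=\rho^{-1}(\go\backslash V)$ is a $(G\vert_{\go\backslash V},H\vert_{\ho\backslash U})$-equivalence; this is the routine observation that restricting free and proper actions to reductions over invariant sets keeps them free and proper and that the orbit-space identifications restrict. (3) Show that completing $C_c(Z_U)\subseteq C_c(Z)$ produces a sub-imprimitivity bimodule $\mathcal{X}(Z_U)\subseteq\mathcal{X}$ with $\mathcal{X}(Z_U)=\mathcal{X}\cdot C_r^*(H\vert_U)=C_r^*(G\vert_V)\cdot\mathcal{X}$, which is exactly the statement that $C_r^*(G\vert_V)$ and $C_r^*(H\vert_U)$ correspond under the Rieffel correspondence of $\mathcal{X}$. (4) Conclude that the quotient bimodule $\mathcal{X}/\mathcal{X}(Z_U)$ is a $\bigl(C_r^*(G)/C_r^*(G\vert_V)\bigr)$--$\bigl(C_r^*(H)/C_r^*(H\vert_U)\bigr)$ imprimitivity bimodule, and that the restriction maps $C_c(Z)\to C_c(Z\setminus Z_U)$, $C_c(G)\to C_c(G\vert_{\go\backslash V})$, $C_c(H)\to C_c(H\vert_{\ho\backslash U})$ descend to a surjection of imprimitivity bimodules $\mathcal{X}/\mathcal{X}(Z_U)\twoheadrightarrow\mathcal{X}(Z\setminus Z_U)$ intertwining the canonical surjections of the coefficient algebras.

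Assembling these: if $G$ is inner exact then the left-coefficient map of the surjection in (4), namely $C_r^*(G)/C_r^*(G\vert_V)\to C_r^*(G\vert_{\go\backslash V})$, is an isomorphism; by the standard compatibility of the Rieffel correspondence with quotients, the kernel of the surjection $\mathcal{X}/\mathcal{X}(Z_U)\twoheadrightarrow\mathcal{X}(Z\setminus Z_U)$ has the form $\bigl(\mathcal{X}/\mathcal{X}(Z_U)\bigr)\cdot K$ for an ideal $K\subseteq C_r^*(H)/C_r^*(H\vert_U)$ corresponding under the Rieffel correspondence to the kernel of that left-coefficient map, so $K=0$ and hence $C_r^*(H)/C_r^*(H\vert_U)\cong C_r^*(H\vert_{\ho\backslash U})$, i.e. exactness of $0\to C_r^*(H\vert_U)\to C_r^*(H)\to C_r^*(H\vert_{\ho\backslash U})\to 0$. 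Since $U$ was arbitrary, $H$ is inner exact. (One may equivalently package the argument by noting that $C_r^*(G)=C_0(\go)\rtimes_{r}G$ for the trivial action, so inner exactness of $G$ is exactness of the reduced-crossed-product sequences for the dynamical system $(C_0(\go),G,\id)$ and its $G$-invariant ideals $C_0(V)$; then transfer those sequences across $Z$ exactly as in the proof of \cite[Theorem 4.8]{lalondeexact}.) The main obstacle is step (3), with step (4) a variant of it: one must verify, at the level of $C_c$-functions and with the customary care about supports and $C^*$-completions, that the imprimitivity bimodules of the restricted equivalences $Z_U$ and $Z\setminus Z_U$ genuinely occur inside $\mathcal{X}$ as the claimed sub- and quotient bimodules, so that the Rieffel correspondence is forced to match the reductions on the two sides.
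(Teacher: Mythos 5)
Your proposal is correct and is essentially the argument the paper gives: the paper constructs the same set $V = r_Z(s_Z^{-1}(U))$, verifies its $G$-invariance, and then transfers exactness across the equivalence, packaging your steps (2)--(4) as the statement that $U \cup V$ is an open invariant subset of the unit space of the linking groupoid $L = G \sqcup Z \sqcup Z^{\text{op}} \sqcup H$ together with an appeal to \cite[Corollary 4.6]{lalondeexact} applied to the system $(C_0(L^{(0)}), L, \lt)$ and the ideal $C_0(U \cup V)$. The direct Rieffel-correspondence bookkeeping you outline in steps (3) and (4) is exactly what that citation encapsulates, so the two write-ups differ only in presentation.
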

\begin{proof}
	Let $s_Z : Z \to \ho$ and $r_Z : Z \to \go$ denote the structure maps for the $G$- and $H$-actions on $Z$, respectively, and suppose $U \subseteq \ho$ is an open, 
	$H$-invariant set. Then $s_Z^{-1}(U)$ is open in $Z$, and since the range map $r_Z : Z \to \go$ is open, $V = r_Z(s_Z^{-1}(U))$ is open in $\go$.
	We claim that $V$ is also $G$-invariant. Suppose $u \in V$ and $\gamma \in G$ with $s(\gamma) = u$. Choose $z \in s_Z^{-1}(U)$ with $r_Z(z) = u$. Then
	\[
		s_Z(\gamma \cdot z) = s_Z(z) \in U,
	\]
	so $\gamma \cdot z \in s_Z^{-1}(U)$. This implies that $r_Z(\gamma \cdot z) = r(\gamma) \in r_Z(s_Z^{-1}(U)) = V$, so $V$ is $G$-invariant.
	
	Now let $L = G \sqcup Z \sqcup \Zop \sqcup H$ denote the linking groupoid for the $(G,H)$-equivalence $Z$, as defined in \cite[Lemma 2.1]{sims-williams2012}. We claim 
	that $U \cup V$ is an $L$-invariant subset of $\lo$. Clearly we just need to consider the elements of $L$ coming from $Z$ and $\Zop$. Suppose first that $z \in Z$ with 
	$s(z) \in U \cup V$. Then $s_Z(z) \in U$, so $z \in s_Z^{-1}(U)$, and we have $r_Z(z) \in V$ by construction. A similar argument works for elements of $\Zop$, so 
	$U \cup V$ is invariant in $\lo$.
	
	Since $U \cup V$ is an open, invariant subset of $\lo$, we can form the sequence of reduced $C^*$-algebras
	\begin{equation}
	\label{eq:lsequence}
		0 \to C_r^*(L \vert_{U \cup V}) \to C_r^*(L) \to C_r^*(L \vert_{(U \cup V)^c}) \to 0.
	\end{equation}
	We claim that this sequence is exact if and only if 
	\begin{equation}
	\label{eq:gsequence}
		0 \to C_r^*(G \vert_V) \to C_r^*(G) \to C_r^*(G \vert_{V^c}) \to 0
	\end{equation}
	is exact, or similarly if and only if
	\begin{equation}
	\label{eq:hsequence}
		0 \to C_r^*(H \vert_U) \to C_r^*(H) \to C_r^*(H \vert_{U^c}) \to 0
	\end{equation}
	is exact. To prove it, we consider the dynamical system $(C_0(\lo), L, \lt)$. The ideal $C_0(U \cup V)$ of $C_0(\lo)$ is $L$-invariant, and it is easy to see that
	\[
		C_0(U \cup V) \cap C_0(\go) = C_0(V),
	\]
	which is a $G$-invariant ideal in $C_0(\go)$. Likewise,
	\[
		C_0(U \cup V) \cap C_0(\ho) = C_0(U)
	\]
	is an $H$-invariant ideal in $C_0(\ho)$. It then follows from \cite[Corollary 4.6]{lalondeexact} that the sequences \eqref{eq:gsequence} and \eqref{eq:hsequence} are exact 
	if and only if \eqref{eq:lsequence} is exact. Thus if $G$ is assumed to be inner exact, then $H$ must be as well.
\end{proof}

We can now use Theorem \ref{thm:equivinner} to establish an analogue of Corollary \ref{cor:exactsub} for inner exactness. 

\begin{thm}
	Let $G$ be a locally compact Hausdorff groupoid, and suppose $H$ is a closed, wide, inner exact subgroupoid such that the map 
	$p: G/H \to \go$ is proper. Then $G$ is inner exact.
\end{thm}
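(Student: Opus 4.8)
The plan is to follow the blueprint of Corollary \ref{cor:exactsub}, replacing each appeal to exactness by the corresponding statement about inner exactness. First I would note that, as a groupoid admitting a continuous Haar system (which is implicit in speaking of $C_r^*(H)$), $H$ has open range and source maps, so Theorem \ref{thm:wideequiv} applies and shows that $G$ is a $(G\ltimes G/H, H)$-equivalence; reversing the roles of left and right, $G$ is also an $(H, G\ltimes G/H)$-equivalence. Since $H$ is inner exact, Theorem \ref{thm:equivinner} then yields that $G\ltimes G/H$ is inner exact. Writing $X = G/H$, the hypothesis that $p\colon G/H\to\go$ is proper says exactly that $X$ is a fiberwise compact $G$-space, so we are in the setting of the construction preceding Theorem \ref{thm:exactext2}, except that $G\ltimes X$ is now merely inner exact rather than exact.

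Next I would re-run the proof of Theorem \ref{thm:exactext2}, specialized to the single dynamical system and the single family of invariant ideals relevant to inner exactness. Fix an open invariant set $U\subseteq\go$; I want to show that
\[
0 \to C_r^*(G\vert_U) \to C_r^*(G) \to C_r^*(G\vert_{\go\setminus U}) \to 0
\]
is exact. I would apply the machinery of Section \ref{sec:converse} with $\A$ the bundle over $\go$ whose section algebra is $C_0(\go)$ and $\alpha$ the canonical $G$-action, so that $\A\rtimes_{\alpha,r}G = C_r^*(G)$, and with the $G$-invariant ideal $I = C_0(U)$, which produces precisely the sequence above. For this choice the bundle $\A\otimes_\go\Cgh$ has section algebra $C_0(X)$, the pullback system $(p^*\A, G\ltimes X, \sigma)$ has section algebra $p^*A = C_0(X)$ with $\sigma$ the canonical action, and Propositions \ref{prop:c0gh} and \ref{prop:embed} give $p^*\A\rtimes_{\sigma,r}(G\ltimes X)\cong C_r^*(G\ltimes X)$ together with an injective homomorphism $\iota\rtimes\id\colon C_r^*(G)\to C_r^*(G\ltimes X)$. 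Note that the exactness hypothesis on $A$ used in Theorem \ref{thm:exactext2} is not needed here, because the short exact sequence of pullback crossed products will be supplied directly by inner exactness of $G\ltimes X$.

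The point of the specialization is this: $p^{-1}(U)$ is open in $X$ and is $(G\ltimes X)$-invariant — the $(G\ltimes X)$-orbit of a point $x\in X$ is its $G$-orbit $G\cdot x$, and $p(G\cdot x) = [p(x)]$, so $p^{-1}$ carries invariant sets to invariant sets. Hence $p^*I = C_0(p^{-1}(U))$ is a $(G\ltimes X)$-invariant ideal of $C_0(X)$, and under the identifications above the short exact sequence of pullback reduced crossed products appearing in the proof of Theorem \ref{thm:exactext2} becomes
\[
0 \to C_r^*\bigl((G\ltimes X)\vert_{p^{-1}(U)}\bigr) \to C_r^*(G\ltimes X) \to C_r^*\bigl((G\ltimes X)\vert_{X\setminus p^{-1}(U)}\bigr) \to 0,
\]
which is exact because $G\ltimes X$ is inner exact. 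From here the remainder of the proof of Theorem \ref{thm:exactext2} goes through verbatim: I would assemble the commuting diagram whose bottom row is this exact sequence, whose top row is the sequence I wish to prove exact, and whose vertical maps are the injections induced by $\iota\rtimes\id$, and then run the same diagram chase — using, as there, that $f(\gamma)\otimes 1$ lies in the fiber of $\I\otimes_\go\Cgh$ over a point exactly when $f(\gamma)$ lies in the fiber of $\I$ there, so that $\iota\rtimes\id(C_r^*(G))\cap C_r^*((G\ltimes X)\vert_{p^{-1}(U)}) = \iota\rtimes\id(C_r^*(G\vert_U))$ — to conclude that the top row is exact at $C_r^*(G)$ (exactness at the two ends being automatic). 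Since $U$ was an arbitrary open invariant subset of $\go$, $G$ is inner exact.

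The main obstacle I expect is bookkeeping rather than analysis: one has to check carefully that the pullback crossed product sequence produced by the proof of Theorem \ref{thm:exactext2} genuinely coincides with an inner-exactness sequence for $G\ltimes X$. Concretely this amounts to confirming that $p^{-1}(U)$ is $(G\ltimes X)$-invariant and to tracking the isomorphisms $p^*C_0(U)\cong C_0(p^{-1}(U))$, $C_0(p^{-1}(U))\rtimes_{\sigma,r}(G\ltimes X)\cong C_r^*((G\ltimes X)\vert_{p^{-1}(U)})$, and their counterparts for the quotient $C_0(X\setminus p^{-1}(U))$. Once these identifications are pinned down, no new input beyond Theorems \ref{thm:wideequiv}, \ref{thm:exactext2}, and \ref{thm:equivinner} is required.
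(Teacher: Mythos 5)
Your proposal is correct and matches the paper's own argument essentially step for step: both deduce inner exactness of $G \ltimes G/H$ from Theorem \ref{thm:equivinner}, check that $V = p^{-1}(U)$ is an open $(G \ltimes G/H)$-invariant set, use the embedding of Proposition \ref{prop:embed} (available because $p$ is proper), and conclude by the same diagram chase as in Theorem \ref{thm:exactext2}. Your extra care in identifying the pullback crossed product sequence with the inner-exactness sequence for $G \ltimes G/H$ is exactly the bookkeeping the paper leaves implicit.
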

\begin{proof}
	Since $H$ is inner exact, the transformation groupoid $G \ltimes G/H$ is inner exact by Theorem \ref{thm:equivinner}. It is easily checked that Proposition \ref{prop:embed} 
	guarantees that there is an embedding $C_r^*(G) \hookrightarrow C_r^*(G \ltimes G/H)$ since $p : G/H \to \go$ is proper. Suppose $U \subseteq \go$ is open and invariant, 
	and set $V = p^{-1}(U)$. Then $V$ is open in $G/H$, and if $[x] \in V$ and $\gamma \in G$ with $s(\gamma) = p([x]) = r(x)$, then
	\[
		p(\gamma \cdot [x]) = p([\gamma x]) = r(\gamma x) = r(\gamma) \in U
	\]
	since $U$ is invariant. Thus $\gamma \cdot [x] \in V$, so $V$ is $G$-invariant. Consequently, the sequence
	\[
		0 \to C_r^*((G \ltimes G/H) \vert_V) \to C_r^*(G \ltimes G/H) \to C_r^*((G \ltimes G/H) \vert_{V^c}) \to 0
	\]
	is exact since $G \ltimes G/H$ is inner exact. We then get a commuting diagram like the one in the proof of Theorem \ref{thm:exactext2},
	\[
		\xymatrix@C=12pt{
			0 \ar[r] & C_r^*(G \vert_U) \ar[r] \ar[d] & C_r^*(G) \ar[r] \ar[d] & C_r^*(G \vert_{U^c}) \ar[r] \ar[d] & 0 \\
			0 \ar[r] & C_r^*((G \ltimes G/H) \vert_V) \ar[r] & C_r^*(G \ltimes G/H) \ar[r] & C_r^*((G \ltimes G/H) \vert_{V^c}) \ar[r] & 0
		}
	\]
	where the vertical arrows are injective. The result then follows from the same diagram-chasing argument as in the proof of Theorem \ref{thm:exactext2}.
\end{proof}

Next we investigate a negative result on quotients of inner exact groupoids. As Ozawa has observed in \cite{ozawa}, quotients of exact groups need not be exact. Thus the 
question of whether exactness descends to quotients of groupoids is already settled. In the next example, we observe that the situation can actually be much worse: a quotient of 
an exact groupoid by a closed, normal subgroupoid need not be inner exact. Naturally enough, Willett's example of a non-amenable groupoid with weak containment lies at the 
heart of the discussion.

\begin{exmp}
	Let $G = \mathbb{F}_2 \times \hat{\mathbb{N}}$, where $\mathbb{F}_2$ denotes the free group on two generators and $\hat{\mathbb{N}}$ is the one-point compactification of
	the natural numbers. We view $G$ as a group bundle over $\hat{\mathbb{N}}$. Notice that $G$ is the transformation groupoid associated to the trivial action of the exact group
	$\mathbb{F}_2$ on $\hat{\mathbb{N}}$, so it is exact by Theorem \ref{thm:exacttrans}. Now let $\{K_n\}_{n=1}^\infty$ be the sequence of subgroups of $\mathbb{F}_2$ 
	described in \cite[Lemma 2.8]{willett}, which are defined to be
	\[
		K_n = \bigcap \ker \varphi,
	\]
	where $\varphi$ ranges over all group homomorphisms $\varphi : \mathbb{F}_2 \to \Gamma$ with $\abs{\Gamma} \leq n$. Then each $K_n$ is a finite index, normal 
	subgroup of $\mathbb{F}_2$, the subgroups are nested, and $\bigcap_{n=1}^\infty K_n$ is trivial. If we let $K_\infty$ be the trivial subgroup, then the bundle
	\[
		H = \bigcup_{n \in \hat{\mathbb{N}}} K_n
	\]
	is a clopen, wide, normal subgroupoid of $G$. Hence $H$ is exact by Theorem \ref{thm:wideexact}. However, it is easy to see that $H$ encodes the equivalence relation
	on $G$ described in \cite[Section 2]{hls}, so the quotient $G/H$ is precisely the HLS groupoid constructed by Willett in \cite{willett}. But $G/H$ is not inner exact by construction.
	
	Note that it is not necessary to work exclusively with Willett's groupoid here. Any non-inner exact HLS groupoid (such as the ones described in Section 2 of \cite{hls} and 
	Section 8.4 of \cite{claireexact}) will do.
\end{exmp}

The situation for transformation groupoids and subgroupoids is murkier. Thanks to an example of Baum, Guentner, and Willett from \cite{bgw}, we can see that the transformation 
groupoid associated to a \emph{group} action might fail to be inner exact. Given a Gromov monster group $\Gamma$, the authors construct an exact sequence of separable 
commutative $\Gamma$-$C^*$-algebras
\[
	0 \to C_0(\Gamma) \to C_0(Z) \to C_0(\partial Z) \to 0	
\]
such that the sequence of reduced crossed products
\begin{equation}
\label{eq:bgw}
	0 \to C_0(\Gamma) \rtimes_r \Gamma \to C_0(Z) \rtimes_r \Gamma \to C_0(\partial Z) \rtimes_r \Gamma \to 0
\end{equation}
is not exact \cite[Theorem 7.4]{bgw}. As with the HLS examples in \cite{hls}, the failure of exactness is realized at the level of $K$-theory. If we define $G = \Gamma \ltimes Z$, then
\eqref{eq:bgw} is isomorphic to the sequence
\[
	0 \to C_r^*(G \vert_{\Gamma}) \to C_r^*(G) \to C_r^*(G \vert_{\partial Z}) \to 0,
\]
and it follows that $G$ is not inner exact.

The above discussion shows that the analogue of Theorem \ref{thm:exacttrans} for inner exactness does not hold. This suggests that one would need a different approach to show 
that exactness descends to wide subgroupoids. It may be possible to simply weaken the hypotheses of Theorem \ref{thm:exacttrans}---it would clearly suffice to show that if $G$
is inner exact and $H \subseteq G$ is a wide subgroupoid, then $G \ltimes G/H$ is inner exact. In any event, the example described above seems to cast a pall over the whole
situation, and we simply pose the following open question.

\begin{ques}
	If $G$ is inner exact and $H \subseteq G$ is a wide subgroupoid, must $H$ also be inner exact?
\end{ques}

We close with another question that may be of interest to others, which is inspired in part by the aforementioned examples of HLS groupoids. Suppose $G$ is a group bundle.
For $G$ to be exact, it is necessary that each fiber is an exact group by Corollary \ref{cor:exactfibers}. This condition is not sufficient, as the non-exact HLS examples show. 
However, those examples fail to be exact precisely because they are not inner exact. Perhaps these two conditions together guarantee exactness.

\begin{ques}
	Let $G$ be a group bundle. If $G$ is inner exact and $G_u$ is an exact group for all $u \in \go$, must $G$ be exact?
\end{ques}

\section*{Acknowledgements}
The author would like to thank Jean Renault for useful suggestions, and Dana Williams for providing helpful comments during 
the preparation of this paper. 

\bibliographystyle{amsplain}
\bibliography{/Users/slalonde/Documents/Research/SharedFiles/GroupBib.bib}
\end{document}